\DeclareMathAlphabet\mathbfcal{OMS}{cmsy}{b}{n}
\definecolor{bown}{rgb}{0.0, 0.55, 0.55}
\definecolor{bref}{rgb}{0.28, 0.24, 0.55}
\definecolor{error}{rgb}{0.82, 0.41, 0.12}
\definecolor{maroon}{rgb}{0.69, 0.19, 0.38}
\definecolor{olive}{rgb}{0.5, 0.5, 0.0}
\definecolor{cadmium}{rgb}{0.0, 0.42, 0.24}
\newcommand{\Ham}{\mathcal{H}}
\newcommand{\eff}{\tilde{e}}
\newcommand{\Bflo}{f_\mathrm{B}}
\newcommand{\Beff}{e_\mathrm{B}}
\newcommand{\Sflo}{f_\mathrm{S}}
\newcommand{\Seff}{e_\mathrm{S}}
\newcommand{\Rflo}{f_\mathrm{R}}
\newcommand{\Reff}{e_\mathrm{R}}
\newcommand{\Pflo}{f_\mathrm{P}}
\newcommand{\Peff}{e_\mathrm{P}}
\newcommand{\tBflo}{\tilde{f}_\mathrm{B}}
\newcommand{\tBeff}{\tilde{e}_\mathrm{B}}
\newcommand{\tSflo}{\tilde{f}_\mathrm{S}}
\newcommand{\tSeff}{\tilde{e}_\mathrm{S}}
\newcommand{\tRflo}{\tilde{f}_\mathrm{R}}
\newcommand{\tReff}{\tilde{e}_\mathrm{R}}
\newcommand{\tPflo}{\tilde{f}_\mathrm{P}}
\newcommand{\tPeff}{\tilde{e}_\mathrm{P}}
\newcommand{\Seffa}{e_{\mathrm{S}_1}}
\newcommand{\Seffb}{e_{\mathrm{S}_2}}
\newcommand{\Seffc}{e_{\mathrm{S}_3}}
\newcommand{\tSfloa}{\tilde{f}_{\mathrm{S}_1}}
\newcommand{\tSeffa}{\tilde{e}_{\mathrm{S}_1}}
\newcommand{\tSflob}{\tilde{f}_{\mathrm{S}_2}}
\newcommand{\tSeffb}{\tilde{e}_{\mathrm{S}_2}}
\newcommand{\tSfloc}{\tilde{f}_{\mathrm{S}_3}}
\newcommand{\tSeffc}{\tilde{e}_{\mathrm{S}_3}}
\newcommand{\tReffa}{\tilde{e}_{\mathrm{R}_1}}
\newcommand{\tReffb}{\tilde{e}_{\mathrm{R}_2}}
\newcommand{\tReffc}{\tilde{e}_{\mathrm{R}_3}}
\newcommand{\BfloL}{f_\mathrm{B}|_L}
\newcommand{\BeffL}{e_\mathrm{B}|_L}
\newcommand{\BfloZ}{f_\mathrm{B}|_0}
\newcommand{\BeffZ}{e_\mathrm{B}|_0}
\newcommand{\Bflonub}{f_\mathrm{B}^\omega |_{\bar{\nu}}}
\newcommand{\Beffnub}{e_\mathrm{B}^\omega |_{\bar{\nu}}}
\newcommand{\Bflonu}{f_\mathrm{B}^\omega |_\nu}
\newcommand{\Beffnu}{e_\mathrm{B}^\omega |_\nu}
\newcommand{\SeffL}{e_\mathrm{S}|_L}
\newcommand{\SeffZ}{e_\mathrm{S}|_0}
\newcommand{\SeffaL}{e_{\mathrm{S}_1}|_L}
\newcommand{\SeffbL}{e_{\mathrm{S}_2}|_L}
\newcommand{\SeffcL}{e_{\mathrm{S}_3}|_L}
\newcommand{\SeffaZ}{e_{\mathrm{S}_1}|_0}
\newcommand{\SeffbZ}{e_{\mathrm{S}_2}|_0}
\newcommand{\SeffcZ}{e_{\mathrm{S}_3}|_0}
\newcommand{\eZ}{e|_0}
\newcommand{\enu}{e^\omega |_\nu}
\newcommand{\phiZ}{\phi|_0}
\newcommand{\phiL}{\phi|_L}
\newcommand{\mnu}{m^\omega|_\nu}
\newcommand{\hnu}{h^\omega|_\nu}
\newcommand{\snu}{s^\omega|_\nu}
\newcommand{\vect}{\mathbf}
\newcommand{\brho}{\boldsymbol{\rho}}
\renewcommand{\bm}{\mathbf{m}}
\newcommand{\be}{\mathbf{e}}
\newcommand{\blam}{\boldsymbol{\lambda}}
\newcommand{\mat}{\mathbf}
\newcommand{\V}{\mathbf{V}}
\newcommand{\JRM}{\tilde{\mathbf{J}}_{\upm,\upe}}
\newcommand{\sz}{\mathbf{O}}
\renewcommand{\so}{\mathbf{I}}
\newcommand{\spa}{\mathcal}
\newcommand{\D}{d}
\newcommand{\upm}{\mathrm{m}}
\newcommand{\upe}{\mathrm{e}}
\newcommand{\uph}{\mathrm{h}}
\newcommand{\upv}{\mathrm{v}}
\newcommand{\upc}{\mathrm{c}}
\newcommand{\upr}{\mathrm{r}}
\newcommand{\upH}{\mathrm{H}}
\newcommand{\upM}{\mathrm{M}}
\newcommand{\upS}{\mathrm{S}}
\bgroup\color{#1}}{\egroup\ignorespacesafterend}
 \newcommand{\Net}{\mathcal{E}}
\pgfplotsset{compat=1.7}%newest} % Allows to place the legend below plot
\newtheorem{theorem}{Theorem}
\newtheorem{corollary}[theorem]{Corollary}
\newtheorem{definition}[theorem]{Definition}
\newtheorem{remark}[theorem]{Remark}
\newtheorem{assum}[theorem]{Assumption}
\newtheorem{coordinate}[theorem]{Coordinate Representation}
\newtheorem{weak}[theorem]{Weak Formulation}
\newtheorem{close}[theorem]{Closure}
\newtheorem{algo}[theorem]{Algorithm}
\def\thname{name}
\newenvironment{msystem}[1][name]{\def\thname{#1}\mysystem}{\endmysystem}
\definecolor{mp2}{rgb}{0.90, 0.0, 0.48}
\definecolor{bg}{rgb}{0.0, 0.87, 0.87}
\definecolor{cin}{rgb}{0.89, 0.26, 0.2}
\definecolor{cy}{rgb}{1.0, 0.65, 0.0}
\definecolor{mauve}{rgb}{0.88, 0.69, 1.0}
\definecolor{jon}{rgb}{0.98, 0.85, 0.37}
\definecolor{b1}{rgb}{0.28, 0.24, 0.55}
\definecolor{b2}{rgb}{0.08, 0.38, 0.74}
\definecolor{b3}{rgb}{0.0, 0.72, 0.92}
\definecolor{b4}{rgb}{0.0, 0.90, 1.0}
\definecolor{b5}{rgb}{0.61, 0.87, 1.0}
\definecolor{ar1}{rgb}{1.0, 0, 0.2}
\definecolor{ar2}{rgb}{0.8, 0, 0.4}
\definecolor{ar3}{rgb}{0.6, 0, 0.6}
\definecolor{ar4}{rgb}{0.4, 0, 0.8}
\definecolor{ar5}{rgb}{0.2, 0, 1.00}
\definecolor{amber}{rgb}{0.91, 0.41, 0.17}
\definecolor{applegreen}{rgb}{0.55, 0.71, 0.0}
\definecolor{awesome}{rgb}{1.0, 0.13, 0.32}
\definecolor{capri}{rgb}{0.0, 0.75, 1.0}
\definecolor{darkmagenta}{rgb}{0.55, 0.0, 0.55}
\definecolor{etonblue}{rgb}{0.59, 0.78, 0.64}
\begin{document}
\title[Thermodynamic pH-system]{On a port-Hamiltonian formulation and structure-preserving numerical approximations for thermodynamic compressible fluid flow}
\author[Hauschild]{Sarah-Alexa Hauschild$^1$}
\author[Marheineke]{Nicole Marheineke$^{1,\star}$}
\date{\today\\
$^1$ Universit\"at Trier, Arbeitsgruppe Modellierung und Numerik, Universit\"atsring 15, D-54296 Trier, Germany\\
$^\star$ corresponding author, email: marheineke@uni-trier.de, orcid: 0000-0002-5912-3465
}

\begin{abstract}
The high volatility of renewable energies calls for more energy efficiency. Thus, different physical systems need to be coupled efficiently although they run on various time scales. Here, the port-Hamiltonian (pH) modeling framework comes into play as it has several advantages, e.g., physical properties are encoded in the system structure and systems running on different time scales can be coupled easily. Additionally, pH systems coupled by energy-preserving conditions are still pH. Furthermore, in the energy transition hydrogen becomes an important player and unlike in natural gas, its temperature-dependence is of importance. Thus, we introduce an infinite dimensional pH formulation of the compressible non-isothermal Euler equations to model flow with temperature-dependence. We set up the underlying Stokes-Dirac structure and deduce the boundary port variables.
We introduce coupling conditions into our pH formulation, such that the whole network system is pH itself. This is achieved by using energy-preserving coupling conditions, i.e., mass conservation and equality of total enthalpy, at the coupling nodes. Furthermore, to close the system a third coupling condition is needed. Here, equality of the outgoing entropy at coupling nodes is used and included into our systems in a structure-preserving way. Following that, we adapt the structure-preserving aproximation methods from the isothermal to the non-isothermal case. Academic numerical examples will support our analytical findings.
\end{abstract}

\keywords{Thermodynamics, port-Hamiltonian, Euler equations, Structure-preservation, Model order reduction}

\subjclass[2010]{35R02, 65Nxx, 76Nxx}

\maketitle

\section{Introduction}
\noindent A very important part of a successful energy transition is an increasing supply of renewable energies. However, the power supply through such energies is highly volatile. That is why a balancing of this volatility and more energy efficiency is needed. To store such superfluous energy Power-to-X technologies come to mind, where electrical power is converted into heat or gas, which are then stored in the corresponding facilities, e.g., district heating or gas networks. Translating this into mathematics, in order to simulate and optimize in advance, a crucial point here is the coupling of the various physical systems, which run on different time scales. Furthermore, within Power-to-Gas hydrogen plays a growing role. Here, electrical power is converted into hydrogen by electrolysis, which can then be injected into and stored in networks built for natural gas. In contrast to natural gas in pipelines the temperature of the gas becomes more important for the modeling, simulation and optimization when working with mixtures of hydrogen, see \cite{Cle22}. Therefore, the gas flow in the network needs to be modeled with, e.g., the non-isothermal Euler equations, which is rarely done in literature, see \cite{BerLV16} as one of the few publications. Our main goal is to tackle both of these mathematical difficulties, i.e., the efficient coupling and the need for temperature-dependence. The port-Hamiltonian (pH) modeling framework has lately been widely used in the modeling of energy networks, as it has various advantages. As energy is used as a lingua franca, it brings the different scales on a single level, e.g., gas, power and district heating. This makes the coupling of these individual systems easier. Furthermore, when using energy preserving coupling the pH character is inherited during the coupling of the individual systems. Additionally, physical principles like passivity, conservation of energy and mass are ideally encoded in the algebraic and geometric structures of the model. Introductory work on primarily finite dimensional and linear pH differential(-algebraic) systems can be found in  \cite{BeaMXZ18}, \cite{SchJ14}, and \cite{MehS23}. A survey on literature on pH differential-algebraic systems is given in \cite{MehU23}. We will work with the definition  of a finite-dimensional pH system found in \cite{MehM19}, which can be extended to the case of weak solutions and state spaces with infinite dimension. Finite dimensional differential equations often arise from a space discretization of partial differential equations. If so, the pH structure of the underlying partial differential equations needs to be preserved in order to fully utilize the advantages of this framework. Thus, establishing a pH formulation of partial differential equations and the structure-preserving approximation are current fields of research in different application areas. Fundamental results on linear infinite dimensional pH systems are given in \cite{JacZ12}, \cite{LeGZM05}, \cite{SchM02} and \cite{PhiRS23}. Nevertheless,  there is plenty of literature also on the modeling of non-linear infinite dimensional pH systems in various application areas, e.g., fluid flow \cite{AltS17}, \cite{BanSAZISW} and \cite{BanZISW21}, heat transfer \cite{JaesEGJ22} and \cite{SerHM19}, flexible multibody dynamics \cite{BruPM21} and poroelastic networks \cite{AltMU21}. For structure-preserving space discretization there are different approaches in the pH community. There is discrete exterior and finite element exterior calculus, when the modeling is based on differential forms, see \cite{KotML18}, \cite{SchTG02} and \cite{SchSS13}. Furthermore, there are also structure-preserving finite volume methods, \cite{Kot16} and \cite{SerMH18}. As the finite element method is often the method of choice for space discretization, variants of it have been developed in the pH framework, e.g., the mixed finite element method, see \cite{BanWISW20}, \cite{KotT21}, or the partitioned finite element method, see \cite{CardML18} and \cite{SerMH19}. In some applications the finite element spaces have to be chosen problem-dependent, i.e., they have to fulfill some compatibility conditions in order to preserve the pH structure, e.g., as for the isothermal Euler equations \cite{LilSM20}. If the applications lead to large finite dimensional systems, model order reduction needs to be applied to simulate the system. Here, we do not only want a good approximation of the dynamics of the full order model, but also preserve the pH structure. For linear pH differential(-algebraic) systems there is a vast amount of methods based on different ansatzes available, e.g., balancing based \cite{BreS21} and \cite{PolS10}, interpolation based \cite{BeaGM22} and \cite{GugPBS12}, Krylov method based \cite{HauMM19b}, \cite{PolS11} and \cite{LohWEK10}, based on spectral factorization \cite{BreU22}, and based on the effort and flow structure \cite{HauMM19b} and \cite{PolS12}. There are also special methods available for differential-algebraic systems of index one \cite{SchMMV22} and index two \cite{MosSMV22}. Systems with a non-linearity only in the Hamiltonian are subject to symplectic model order reduction, see \cite{AfkH19} and references therein, and Petrov Galerkin model order reduction \cite{ChaBG16}. The literature for model order reduction methods which are applicable to general fully non-linear (port-)Hamiltonian system is unfortunately less rich. The paper \cite{HesP20} deals with non-linear Hamiltonian systems without dissipation, whereas \cite{IonA13} and \cite{Sch23} are concerned with non-linear port-Hamiltonian systems with dissipation. There are also problem-dependent model reduction ansatzes, e.g., for isothermal gas networks \cite{EggKLSMM18} and \cite{LilSM21}, the non-linear Schrödinger equation \cite{KarU18} and the shallow water equations \cite{KarYU21}. Last but not least, structure-preserving time integration is also a wide field of ongoing research. There are ansatzes based on symplectic time integrators \cite{LefM17}, collocation methods \cite{MehM19} and \cite{KotL18} and operator splitting \cite{FroGLSM23}. Recently, in the context of modeling thermodynamic pipe flow the GENERIC framework has been used, see \cite{BadMBM18} and \cite{BadZ18}. Here, a generalized (non-linear) pH system in operator form is set up, i.e.,
\begin{align*}
\frac{\D z}{\D t}&=\left(J(z)-R(z)\right)\frac{\delta\mathcal{E}(z)}{\delta z}+B(z)u(z)\quad &\text{in }  \mathcal{D}_z^*,\\
y(z)&=B^*(z)\frac{\delta\mathcal{E}(z)}{\delta z} \quad &\text{in } \mathcal{D}_u^*,
\end{align*}
with state $z$. Here, $\frac{\delta\mathcal{E}(z)}{\delta z}$ denotes the Frech\`et derivative of the operator $\mathcal{E}(z)$ with respect to $z$, $\mathcal{D}_z^*$ and $\mathcal{D}_u^*$ are the dual spaces to the state space $\spa{D}_z$ and the input space $\spa{D}_u$. Furthermore, $B^*(z)$ denotes the adjoint of the input operator $B(z)$, see \cite{HauMMMMBRS19} for more details on the system structure and function spaces in the case of the non-isothermal Euler equations.
Accounting for the thermodynamic behavior of the pipe flow, this system is composed of a Hamiltonian and a generalized gradient system. This is reflected an exergy-like functional consisting of a Hamiltonian $\Ham(z)$ and an entropy part $\mathcal{S}(z)$, i.e.,
\begin{align*}
\mathcal{E}(z)=\Ham(z)-\theta\mathcal{S}(z),
\end{align*}
where $\theta$ is a constant, chosen such that the units of $\Ham$ and $\theta\mathcal{S}$ match. These functionals have to fulfill the non-interacting conditions
\begin{align*}
J(z)\frac{\delta\mathcal{S}(z)}{\delta z}=0, \quad \quad R(z)\frac{\delta\Ham(z)}{\delta z}=0.
\end{align*}
These conditions ensure that the flows of the Hamiltonian and the gradient system do not overlap. 
This framework is closely related to the use of Poisson and dissipation brackets within modeling thermodynamics, see \cite{OetG97} and \cite{OetG97b}. It has been successfully used in combination with structure-preserving approximation for the Vlasov-Maxwell system in \cite{KraH17} and \cite{KraKMS17}. The exergetic modeling framework is still a topic of ongoing research, see \cite{LohKL21}. Nevertheless, this framework has disadvantages regarding the structure-preserving approximation with respect to our application, e.g., the non-interacting conditions. Thus, we refrain from working with this and rather take advantage of the well studied pH framework.  Thus, we first introduce a new infinite dimensional pH formulation for the non-isothermal Euler equations in Section \ref{sec:pHForm}, which depends only on the Hamiltonian. In Section \ref{Sec:Ports} we deduce a boundary port and prove that the underlying structure is indeed a Stokes-Dirac structure. The boundary port allows for structure-preserving incorporation of boundary and coupling conditions, which not only are energy-preserving, but also suit the thermodynamics of the system, see Sections \ref{Sec:WeakForm} and \ref{Sec:NetPH}. See \cite{LanM18} for the recently found well-defined coupling conditions for the non-isothermal Euler equations. Based on the literature for isothermal gas flow, e.g., \cite{LilSM20}, \cite{LilSM21}, we set up numerical methods for space-discretization, model order and complexity reduction in Section \ref{Sec:SPApprox}, which not only preserve the pH structure, but also suit the thermodynamics of the system. The latter two procedures become extremely important when simulating large networks. This overall leads to a novel pH formulation of compressible non-isothermal gas flow in networks of pipes with its own tailored structure-preserving numerical methods, which can now be easily coupled to existing pH formulations of power and isothermal gas networks. Numerical examples in Section \ref{Sec:Num} show the interesting influence of the interconnection operator $J(z)$ on model order reduction. We further compare pipe-wise and network-wise reduction of the gas networks. The paper is concluded with a short summary in Section \ref{Sec:Con}. Several coordinate representations are given in Appendix \ref{Ap:CoRep}. The derivation of the boundary port and the proof for the Stokes-Dirac structure are moved to Appendix \ref{Appendix2} for better readability.

\section{Port-Hamiltonian Framework}\label{sec:pHForm}
\subsection{Port-Hamiltonian Formulation of Compressible Non-Isothermal Pipe Flow}
We set up infinite dimensional pH formulations for the compressible non-isothermal Euler-type equations. This includes the underlying Stokes-Dirac structure and the boundary port with its flow and effort variables. Lastly, we add boundary conditions in a structure-preserving way to our weak pH formulation, which is essential for structure-preserving space discretization, model order reduction and even coupling. For the beginning we consider compressible fluid flow through a single pipe $\omega=(0,L)$ with $L>0$ being the length of the pipe. We make use of the ideal gas laws and therefore, make the following assumption.
\begin{assum}\label{as:IdealGas}
We assume that the gas in our network is an ideal gas, such that we have,
\begin{align*}
p=RT\rho,\quad \epsilon=c_vT,\quad s=c_v\operatorname{ln}(\frac{p}{\rho^\gamma}),\quad E=\rho(\epsilon+\frac{v^2}{2}) ,\quad  h=\frac{E+p}{\rho}.
\end{align*}
Here, $p$, $T$, $\rho$, $\epsilon$ and $E$ denote the pressure, temperature, mass density, the internal energy, and the total energy, respectively. Furthermore, $s$ is the specific entropy and $h$ the total specific enthalpy. The specific gas constant $R$ and the heat capacities $c_p$ and $c_v$ of the considered ideal gas are related to each other, i.e., $c_p=R+c_v$. Additionally, the adiabatic index is given by $\gamma=\frac{c_p}{c_v}$.
\end{assum}
\begin{msystem}[$E^v$]
\noindent For $t>0$, $x\in\omega$ the compressible non-isothermal Euler equations are of the form,
\begin{align}\label{eq:DCv} 
    0&=\partial_t\rho+\partial_x(\rho v),\nonumber\\
    0&=\partial_tv+\partial_x(\frac{v^2}{2})+\frac{1}{\rho}\partial_x p+\frac{\lambda}{2d}|v|v,\nonumber\tag*{$E^v$}\\
    0&=\partial_t e+\partial_x(ev)+p\partial_xv-\frac{\lambda}{2d}\rho|v|v^2+\frac{k_\omega}{d}(T-T_\infty).\nonumber
\end{align}
\end{msystem}
\noindent Here, $v$  and $e=\rho \epsilon$ denote the velocity and the internal energy density, respectively. Pressure $p$ and temperature $T$ are prescribed in Assumption \ref{as:IdealGas}. Furthermore, the specifics of the pipe are given by its diameter $d>0$, the friction factor $\lambda\geq 0$ and the thermal conductivity coefficient $k_\omega\geq 0$. The ambient temperature of the pipe is denoted by $T_\infty\geq0$. Naturally, there are different choices for the state variables. For the following analytical considerations we choose to work with the mass density $\rho$, the velocity $v$ and the internal energy density $e$, as this simplifies the procedure. Nevertheless, other state variables will prove advtangeous for the numerical treatment, see Section \ref{Sec:WeakForm}, but even after a change of variables the following theoretical findings still hold. To close \ref{eq:DCv} we not only need state equations for the pressure and temperature and initial conditions, but also boundary conditions. As the compressible non-isothermal Euler equations are a hyperbolic system of three partial differential equations, we need to take special care of the boundary conditions. In the following, we make use of the assumption 
\begin{assum}\label{as:SubSonic}
We assume subsonic flow, i.e., $|v|<c=\sqrt{\frac{\gamma p}{\rho}}$. Here, $c$ denotes the speed of sound.
\end{assum} 
\noindent Under this assumption, we have to state two boundary conditions at the inflow and one at the outflow of the pipe, see \cite{Egg16}, i.e., three boundary conditions in total. In the following, we assume that three consistent boundary conditions are given. In Section \ref{sec:BoundaryConditions} we show how a certain kind of boundary conditions is incorporated into the pH formulation via Lagrange multipliers.
\begin{remark}
The state variables $\rho$, $v$ and $e$ are obviously time- and space-dependent. We suppress these dependencies to enhance readability. The same applies for other quantities. If it is not clear through the context, there will be remarks on the time- and space-dependencies. 
\end{remark}
\noindent Following the ideas of \cite{MehM19} for finite dimensional pH systems, we embed \ref{eq:DCv} into the pH formalism. We seek a system of the form
\begin{align*}%\label{eq:OperatorpH}
E(z)\partial_tz&=(J(z)-R(z))\eff(z)+Bu,\\
y&=B^T\eff(z),\nonumber
\end{align*}
with $J(z)$ skew-adjoint and $R(z)$ self-adoint semi-elliptic in the $\spa{L}^2$ inner-product, fulfilling the condition 
\begin{align}\label{Cond:Volker}
E(z)^T\eff(z)=\frac{\delta\Ham}{\delta z}(z).
\end{align}
\begin{msystem}[$E^v_{pH}$]
A port-Hamiltonian formulation of \ref{eq:DCv} for the  state $z=[\rho\quad v\quad e]^T$ is given by
\begin{align*}
E(z)\partial_tz&=(J(z)-R(z))\eff(z)+Bu,\\
y&=B^T\eff(z),
\end{align*}
with effort function $\eff(z)=[\frac{v^2}{2}\quad \rho v\quad 1]^T$ and system operators
\usetagform{simple}
\begin{align}\label{Sys:pHDCv}\tag{$E^v_{pH}$}
\begin{split}
E(z)&=\begin{bmatrix} 1&0&0\\ 0 & 1 &0\\0&0&1\end{bmatrix},\quad R(z)=\begin{bmatrix}0&0&0\\0&0&0\\0&0&\frac{k_\omega}{d}T\end{bmatrix},\quad B=\begin{bmatrix}0\\0\\\frac{k_\omega}{d}\end{bmatrix},\\
J(z)&=\begin{bmatrix}0&-D_x&0\\-D_x&0&-\frac{\lambda}{2d} v|v|-\frac{e}{\rho}D_x-\frac{1}{\rho}D_xp\\0&\frac{\lambda}{2d}v|v|-D_x\frac{e}{\rho}-pD_x\frac{1}{\rho}&0
\end{bmatrix}.
\end{split}
\end{align}
The input is given as $u=T_\infty$. The corresponding Hamiltonian is $$\Ham(z)=\int_\omega\rho\frac{v^2}{2}+e\,\D x.$$ Furthermore, the operator $D_x$ is defined for smooth enough functions $g_1,\,g_2,\,g_3$ by $(g_1D_xg_2)g_3:=g_1\partial_x(g_2 g_3)$.
\end{msystem}
\noindent The above system is derived as follows. Starting from \ref{eq:DCv} with Hamiltonian $\Ham(z)=\int_\omega\rho\frac{v^2}{2}+e\D x$, we can deduce the corresponding effort variables, which play an important role in the pH framework. They are given by the Frech\`et derivative of the Hamiltonian,
\begin{align*}
\frac{\delta\Ham}{\delta (\rho,v,e)}(\rho,v,e)=\left[\frac{\delta\Ham}{\delta \rho}\quad \frac{\delta\Ham}{\delta v}\quad \frac{\delta\Ham}{\delta e}\right]^T=\left[\Ham_\uprho\quad\Ham_\upv\quad\Ham_\upe\right]^T=\left[\frac{v^2}{2}\quad \rho v \quad 1\right]^T. 
\end{align*}
The first step towards a pH formulation is to create a dependency on the effort variables. This can be easily done, as we only need division by $\rho$ or multiplication with $\Ham_\upe$ in certain places, i.e.,
\begin{align*}
\partial_t\rho&=-\partial_x\Ham_\upv,\\
\partial_tv&=-\partial_x\Ham_\uprho-\frac{1}{\rho}\partial_x(\Ham_\upe p)-\frac{\lambda}{2d}v|v|\Ham_\upe,\\
\partial_te&=-\partial_x(\frac{e}{\rho}\Ham_\upv)-p\partial_x(\frac{1}{\rho}\Ham_\upv)+\frac{\lambda}{2d}v|v|\Ham_\upv-\frac{k_\omega}{d}T\Ham_\upe+\frac{k_\omega}{d}T_\infty.
\end{align*} 
We split the cooling term into a state- and effort-dependent part, which is incorporated into $R(z)$, and a state-independent part, such that $T_\infty$ acts as an input from the environment.  Taking a closer look, we notice that the counterpart to $-\partial_x(\frac{e}{\rho}\Ham_\upv)$ in the energy density equation is absent in the velocity equation, but is needed to create skew-adjointness. As this new additional term needs to be dependent on $\Ham_\upe=1$, we add a nurturing zero, i.e., $\frac{e}{\rho}\partial_x\Ham_\upe=\frac{e}{\rho}\partial_x1=0$, to the velocity equation. This yields,
\begin{align*}
\partial_t\rho&=-\partial_x\Ham_\upv,\\
\partial_tv&=-\partial_x\Ham_\uprho-\frac{e}{\rho}\partial_x\Ham_\upe-\frac{1}{\rho}\partial_x(\Ham_\upe p)-\frac{\lambda}{2d}v|v|\Ham_\upe,\\
\partial_te&=-\partial_x(\frac{e}{\rho}\Ham_\upv)-p\partial_x(\frac{1}{\rho}\Ham_\upv)+\frac{\lambda}{2d}v|v|\Ham_\upv-\frac{k_\omega}{d}T\Ham_\upe+\frac{k_\omega}{d}T_\infty,
\end{align*}
i.e., System \ref{Sys:pHDCv}. Note that we incorporate the derivative terms and the friction terms into the skew-adjoint operator of the pH system, the latter is possible due to the opposite signs of the friction terms.
Thus, $E(z)=\so_3$, such that condition \eqref{Cond:Volker} is trivially fulfilled as $\eff(z)=\frac{\delta}{\delta z}\Ham(z)$. 
\begin{corollary}\label{Cor:JRProp}
Assume that boundary terms vanish. For $\psi,\phi\in\spa{H}^1(\omega)^3$ and $\tilde{\psi},\tilde{\phi}\in\spa{L}^2(\omega)^3$ the operators
\begin{align*}
J(z)[\cdot]:\spa{H}^1(\omega)^3\rightarrow\spa{H}^1(\omega)^3\quad \text{and}\quad R(z)[\cdot]:\spa{L}^2(\omega)^3\rightarrow\spa{L}^2(\omega)^3
\end{align*}
defined in System \ref{Sys:pHDCv} are skew-adjoint and self-adjoint semi-elliptic in the $\spa{L}^2$ inner-product, respectively.
\end{corollary}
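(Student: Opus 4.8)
The plan is to treat the two operators separately, since $R(z)$ is algebraically trivial while $J(z)$ carries all the differential structure. First I would dispose of $R(z)$: its only nonzero entry is the $(3,3)$ block $\frac{k_\omega}{d}T$, which is multiplication by a real-valued, non-negative function (recall $k_\omega\geq 0$, $d>0$ and $T>0$ under Assumptions \ref{as:IdealGas} and \ref{as:SubSonic}). Multiplication by a real function is self-adjoint in the $\spa{L}^2$ inner product, and for $\tilde{\psi}\in\spa{L}^2(\omega)^3$ one has $\langle R(z)\tilde{\psi},\tilde{\psi}\rangle=\int_\omega\frac{k_\omega}{d}T\,\tilde{\psi}_3^2\,\D x\geq 0$, giving self-adjointness and semi-ellipticity at once.

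The substance is the skew-adjointness of $J(z)$. I would first isolate the single computation behind everything: for sufficiently regular coefficients $g_1,g_2$ and arguments in $\spa{H}^1(\omega)$, the building-block operator $g_1 D_x g_2$, which by definition sends $\psi\mapsto g_1\partial_x(g_2\psi)$, satisfies $\langle g_1 D_x g_2\,\psi,\phi\rangle=-\langle\psi,g_2 D_x g_1\,\phi\rangle$ once the boundary terms produced by integration by parts are discarded (as assumed). Equivalently, the $\spa{L}^2$-adjoint rule is $(g_1 D_x g_2)^*=-g_2 D_x g_1$. This one integration by parts is the engine of the whole argument; the only care needed is in bookkeeping the product $g_1 g_2$ that multiplies the dropped boundary evaluation.

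With this rule in hand, skew-adjointness reduces to checking that each off-diagonal pair of entries is the negative adjoint of the other, the diagonal blocks being zero. For the $(1,2)$/$(2,1)$ pair both entries equal $-D_x=-(1\,D_x\,1)$, and the rule gives $(-D_x)^*=D_x$, so $-(J_{12})^*=-D_x=J_{21}$. For the $(2,3)$/$(3,2)$ pair I would split $J_{23}=-\tfrac{\lambda}{2d}v|v|-\tfrac{e}{\rho}D_x-\tfrac{1}{\rho}D_x p$ into the self-adjoint multiplication operator $-\tfrac{\lambda}{2d}v|v|$ and the two transport pieces, apply the adjoint rule termwise to obtain $(J_{23})^*=-\tfrac{\lambda}{2d}v|v|+D_x\tfrac{e}{\rho}+p\,D_x\tfrac{1}{\rho}$, and read off $-(J_{23})^*=J_{32}$. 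The key qualitative point, exactly as engineered in the derivation of System \ref{Sys:pHDCv}, is that the friction term enters with opposite signs in the $(2,3)$ and $(3,2)$ slots: being a self-adjoint multiplication, it contributes to skew-adjointness precisely through that sign flip.

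The main obstacle is not analytic depth but careful handling of the nonstandard operator $D_x$: one must keep the inner factor $g_2$ inside the derivative and the outer factor $g_1$ outside, so that integration by parts transfers the derivative onto $\phi$ with the two factors correctly swapped, and one must track the minus sign carried by each term separately from the minus sign generated by the adjoint. A secondary point worth recording is the function-space bookkeeping: the derivative forces $J(z)$ to consume $\spa{H}^1$ regularity, and the pairing $\langle J(z)\psi,\phi\rangle$ requires $\phi\in\spa{H}^1(\omega)^3$ as well for the integration by parts to be legitimate, consistent with the domains stated in the corollary.
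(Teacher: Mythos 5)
Your proposal is correct and uses essentially the same argument as the paper: $R(z)$ is handled as a non-negative multiplication operator, and the skew-adjointness of $J(z)$ comes down to one integration by parts per differential term with the boundary contributions discarded and the friction term contributing through its sign flip between the $(2,3)$ and $(3,2)$ entries. The only difference is organizational — you package the computation as the entrywise adjoint rule $(g_1 D_x g_2)^\ast=-g_2 D_x g_1$ and verify $J_{ji}=-(J_{ij})^\ast$, whereas the paper expands the full bilinear form $(\phi,J(z)\psi)$ and integrates by parts term by term; the underlying calculation is identical.
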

\begin{proof}
For $\tilde{\phi},\,\tilde{\psi}\in \spa{L}^2(\omega)^3$ with $\tilde{\phi}=[\tilde{\phi}_1,\,\tilde{\phi}_2,\tilde{\phi}_3]^T$ and $\tilde{\psi}=[\tilde{\psi}_1,\,\tilde{\psi}_2,\,\tilde{\psi}_3]^T$ we have that 
\begin{align*}
(\tilde{\phi},R(z)\tilde{\psi})=(\tilde{\psi},R(z)\tilde{\phi}) \quad \text{and}\quad (\tilde{\phi},R(z)\tilde{\phi})=\int_\omega \tilde{\phi}_3 \frac{k_\omega}{d}T \tilde{\phi}_3\,\D x\geq 0,
\end{align*}
since $k_\omega,\, T\geq 0$ and $d>0$, such that $R(z)[\cdot]:\spa{L}^2(\omega)^3\mapsto\spa{L}^2(\omega)^3$ is self-adjoint and semi-elliptic.\\
\noindent Assuming that the boundary terms vanish, we can show that $J(z)[\cdot]:\spa{H}^1(\omega)^3\mapsto\spa{H}^1(\omega)^3$ is skew-adjoint in the $\spa{L}^2$ inner-product. For $\phi,\,\psi\in\spa{H}^1(\omega)^3$ with $\phi=[\phi_1,\,\phi_2,\phi_3]^T$ and $\psi=[\psi_1,\,\psi_2,\,\psi_3]^T$ we show that
$(\phi,J(z)\psi)=-(J(z)\phi,\psi)$. Using integration by parts and the vanishing boundary terms, we have that
\begin{align*}
(\phi,J(z)\psi)&=-(\phi_1,\partial_x\psi_2)-(\phi_2,\partial_x\psi_1)-(\phi_2,\frac{\lambda}{2d}v|v|\psi_3)-(\phi_2,\frac{e}{\rho}\partial_x\psi_3)-(\phi_2,\frac{1}{\rho}\partial_x(p\psi_3))\\&\quad +(\phi_3,\frac{\lambda}{2d}v|v|\psi_2)-(\phi_3,\partial_x(\frac{e}{\rho}\psi_2))-(\phi_3,p\partial_x(\frac{1}{\rho}\psi_2))\\
&=(\partial_x\phi_1,\psi_2)+(\partial_x\phi_2,\psi_1)-(\frac{\lambda}{2d}v|v|\phi_2,\psi_3)+(\partial_x(\frac{e}{\rho}\phi_2),\psi_3)+(p\partial_x(\frac{1}{\rho}\phi_2),\psi_3)\\&\quad 
+(\frac{\lambda}{2d}v|v|\phi_3,\psi_2)+(\frac{e}{\rho}\partial_x\phi_3,\psi_2)+(\frac{1}{\rho}\partial_x(p\phi_3),\psi_2)=-(J(z)\phi,\psi).
\end{align*}
\end{proof}

\subsubsection{Ports and Stokes-Dirac Structure}\label{Sec:Ports}
In this section, we take a closer look at the Stokes-Dirac structure which the pH System \ref{Sys:pHDCv} is based upon and introduce a boundary port in order to pose boundary conditions in a structre-preserving way. A Stokes-Dirac structure is defined as follows, see \cite{BanSAZISW}.
\begin{figure}
\centering
\begin{tikzpicture}
\draw (2,2) ellipse (1.5cm and 0.75cm) node{$\mathcal{D}$};
%Left
\draw (0,2.1)--(0.5,2.1);
\draw (0,1.9)--(0.5,1.9);
%Right
\draw (3.5,2.1)--(4,2.1);
\draw (3.5,1.9)--(4,1.9);
%Above
\draw (2.1,2.75)--(2.1,3.25);
\draw (1.9,2.75)--(1.9,3.25);
%Below
\draw (2.1,1.25)--(2.1,0.75);
\draw (1.9,1.25)--(1.9,0.75);
Text
\node at (0.25,2.3) {$\Sflo$};
\node at (0.25,1.7) {$\Seff$};
\node at (3.75,2.3) {$\Pflo$};
\node at (3.75,1.7) {$\Peff$};
\node at (2.35,3) {$\Reff$};
\node at (1.65,3) {$\Rflo$};
\node at (1.65,1) {$\vect{\Bflo}$};
\node at (2.35,1) {$\vect{\Beff}$};
\node at (2,3.95) {energy-dissipating port}; 
\node at (2,3.55) {$\spa{F}_\mathrm{R}\times\spa{E}_\mathrm{R}$};
\node at (-1.5,2.4) {energy-saving port};
\node at (-1.5,2) {$\spa{F}_\mathrm{S}\times\spa{E}_\mathrm{S}$};
\node at (-1.5,1.6) {$\Ham(z):\spa{F}_\mathrm{S}\rightarrow\mathbb{R}$};
\node at (5.5,2.4) {external port};
\node at (5.5,2) {$\spa{F}_\mathrm{P}\times\spa{E}_\mathrm{P}$};
\node at (2,0.2) {boundary port};
\node at (2,-0.2) {$\spa{F}_\mathrm{B}\times\spa{E}_\mathrm{B}$};
\end{tikzpicture}
\caption{Stokes-Dirac structure}
\label{fig:Stokes-Dirac}
\end{figure}
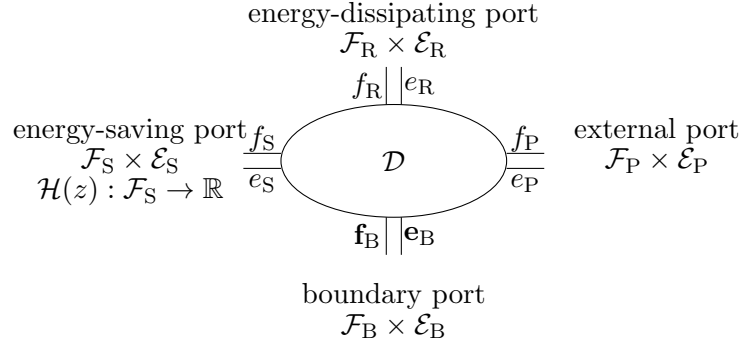
\begin{definition}\label{def:StokesDirac}
Let $\spa{F}_\omega$ and $\spa{E}_\omega$ denote the real Hilbert spaces of flow variables and effort variables defined on the domain $\omega$, respectively. Also let $\spa{F}_\mathrm{B}$ and $\spa{E}_\mathrm{B}$ denote the real Hilbert spaces of flow and effort variables defined on the boundary $\partial\omega$, respectively. Furthermore, consider the spaces of flow variables $\spa{F}$ and effort variables $\spa{E}$, where $\spa{F}=\spa{F}_\omega\times\spa{F}_\mathrm{B}$ and $\spa{E}=\spa{E}_\omega\times\spa{E}_\mathrm{B}$, respectively. Moreover, let $\spa{E}$ be the dual of $\spa{F}$. Finally, we endow the product space $\spa{B}=\spa{F}\times\spa{E}$ with the pairing:
\begin{align}\label{eq:Bilin}
\langle\langle[f_\omega,\vect{\Bflo},e_\omega,\vect{\Beff}],[\tilde{f}_\omega,\vect{\tBflo},\tilde{e}_\omega,\vect{\tBeff}]\rangle\rangle=\langle [f_\omega,\vect{\Bflo}],[\tilde{e}_\omega,\vect{\tBeff}]\rangle+\langle [e_\omega,\vect{\Beff}],[\tilde{f}_\omega,\vect{\tBflo}]\rangle.
\end{align}
Here, $\langle\cdot,\cdot\rangle$ defines a power-product on the product space $\spa{B}$.
Then, the subset $\spa{D}$ of $\spa{B}$ is a Stokes-Dirac structure with respect to the non-degenerate bilinear form \eqref{eq:Bilin}, if $\spa{D}=\spa{D}^\perp$, where $\spa{D}^\perp$ denotes the orthogonal complement of $\spa{D}$ and is defined as $\spa{D}^\perp\coloneqq\{[\tilde{f},\tilde{e}]\in\spa{F}\times\spa{E}|\langle\langle[\tilde{f},\tilde{e}],[f,e]\rangle\rangle=0\, \forall[f,e]\in\spa{D}\}.$ Here,\linebreak $[f,e]=[f_\omega,\vect{\Bflo},e_\omega,\vect{\Beff}]$ and analogously for $[\tilde{f},\tilde{e}]$.
\end{definition}
\noindent Considering flow through a pipe, we have that $\omega=(0,L)$ and $\partial\omega=\{0,L\}$. Furthermore, the flow and effort spaces $\spa{F}_\omega$ and $\spa{E}_\omega$ are given by $\spa{F}_\omega=\spa{F}_\upS\times\spa{F}_\mathrm{R}\times\spa{F}_\mathrm{P}$ and $\spa{E}_\omega=\spa{E}_\upS\times\spa{E}_\mathrm{R}\times\spa{E}_\mathrm{P}$ with $\spa{F}_\upS$, $\spa{E}_\upS$ being the flow and effort spaces of the energy-saving port, $\spa{F}_\mathrm{R}$, $\spa{E}_\mathrm{R}$ being the flow and effort spaces of the resistive port and $\spa{F}_\mathrm{P}$, $\spa{E}_\mathrm{P}$ being the flow and effort spaces of the external port. A visualization of the Stokes-Dirac structure and the different ports is given in Figure \ref{fig:Stokes-Dirac}. Considering System \ref{Sys:pHDCv}, the energy-dissipating port describes the state-dependent part of the cooling term, whereas the external port takes the ambient temperature of the pipe as an input. Furthermore, the energy-saving port is endowed with the structural operator $J(z)$ and the Hamiltonian $\Ham(z)$. The boundary port gives us the possibility to model energy flow over the boundary and to couple pipes in an energy-preserving manner, see Section \ref{Sec:pHNet}. The power-product needed in Definition \ref{def:StokesDirac} is then given by,
\begin{align}\label{eq:PowerProduct}
\langle[f_\omega,\vect{\Bflo}],[e_\omega,\vect{\Beff}]\rangle=\int_\omega \Sflo\cdot\Seff+\Rflo\cdot\Reff+\Pflo\cdot\Peff\,\D x +\vect{\Bflo}\cdot\vect{\Beff},
\end{align}
with $f_\omega=[\Sflo\quad\Rflo\quad\Pflo]^T\in\spa{F}_\omega$ and $e_\omega=[\Seff\quad\Reff\quad\Peff]^T\in\spa{E}_\omega$.
The dot $\cdot$ denotes the standard inner-product. Now, we can propose the flow and effort variables for the System \ref{Sys:pHDCv}.
\begin{close}
For the port-Hamiltonian formulation of System \ref{Sys:pHDCv} the boundary effort and flow variables can be stated in the following way
\begin{align*}
\vect{\Bflo}=[\BfloL\quad \BfloZ]^T, \quad  \vect{\Beff}=[\BeffL\quad \BeffZ]^T
\end{align*}
with
\begin{align}\label{eq:BoundaryPort}
\begin{bmatrix}\BfloL\\\BeffL\end{bmatrix}=\begin{bmatrix}0 & -1 &0\\ 1&0& \frac{e+p}{\rho}|_L\end{bmatrix}\left.\begin{bmatrix}
\frac{v^2}{2}\\\rho v\\1\end{bmatrix}\right|_L, \quad \begin{bmatrix}\BfloZ\\\BeffZ\end{bmatrix}=\begin{bmatrix}0 &1 &0\\ 1&0& \frac{e+p}{\rho}|_0\end{bmatrix}\left.\begin{bmatrix}
\frac{v^2}{2}\\\rho v\\1\end{bmatrix}\right|_0.
\end{align}
\end{close}
\noindent As the derivation of these boundary port variables is rather elaborate, we refer the interested reader to Appendix \ref{Appendix2}. When taking a closer look at the boundary variables given in \eqref{eq:BoundaryPort}, we notice that the boundary flows are the mass flows at both ends of the pipe. The boundary efforts are a sum of the storage efforts with respect to $\rho$ and $e$ evaluated at the boundary points of the pipe. This resembles the total specific enthalpy of the system at both pipe ends. This representation is advantageous for system coupling.
Now, we are able to set up the Stokes-Dirac structure underlying the pH System \ref{Sys:pHDCv}.
\begin{theorem}\label{Theo:DiracpHDC}
Let $\spa{F}=\spa{H}^1(\omega)^3\times\spa{L}^2(\omega)^3\times\spa{L}^2(\omega) \times\mathbb{R}^2$ and $\spa{E}=\spa{H}^1(\omega)^3\times\spa{L}^2(\omega)^3\times\spa{L}^2(\omega)\times\mathbb{R}^2$. Then the underlying Stokes-Dirac structure is given by the linear subset $\spa{D}\subset\spa{F}\times\spa{E}$, 
\begin{align}\label{eq:StokesDirac}
\begin{split}
\spa{D}=&\Bigl\{[[f_\omega,\vect{\Bflo}],[e_\omega,\vect{\Beff}]]\in\spa{F}\times\spa{E}|\,
[p\Seffc,\,\frac{e}{\rho}\Seffb,\,\frac{1}{\rho}\Seffb]\in\spa{H}^1(\omega)^3\\
&\quad\begin{bmatrix}\Sflo\\\Rflo\\\Pflo\end{bmatrix}+\begin{bmatrix}
J(z)&\so&B\\-\so&\sz&\vect{0}\\-B^T&\vect{0}&0
\end{bmatrix}\begin{bmatrix}\Seff\\\Reff\\\Peff\end{bmatrix}=\begin{bmatrix}
\vect{0}\\\vect{0}\\0\end{bmatrix},\\
&\quad\begin{bmatrix}\BfloL\\\BeffL\end{bmatrix}=\begin{bmatrix}0 & -1 &0\\ 1&0& \frac{e+p}{\rho}|_L\end{bmatrix}\SeffL, \quad  \begin{bmatrix}\BfloZ\\\BeffZ\end{bmatrix}=\begin{bmatrix}0 &1 &0\\ 1&0& \frac{e+p}{\rho}|_0\end{bmatrix}\SeffZ\Bigr\}.
\end{split}
\end{align}
Furthermore, the system of equations
\begin{align}\label{eq:ports}
\begin{split}
\Sflo&=-E(z)\partial_tz, \quad \Seff=\frac{\delta\Ham}{\delta z}(z),\quad \Reff=-R(z)\Rflo, \quad \Peff=T_\infty,\\&\quad [[f_\omega,\vect{\Bflo}],[e_\omega,\vect{\Beff}]]\in\spa{D},
\end{split}
\end{align}
is equivalent to the original System \ref{Sys:pHDCv}, and 
$\langle[f_\omega,\vect{\Bflo}],[e_\omega,\vect{\Beff}]\rangle=0$ represents the power-balance equation.
\end{theorem}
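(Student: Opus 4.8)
The plan is to prove Theorem \ref{Theo:DiracpHDC} in three stages: first establish that the subset $\spa{D}$ defined in \eqref{eq:StokesDirac} is genuinely a Stokes-Dirac structure in the sense of Definition \ref{def:StokesDirac} (i.e. $\spa{D}=\spa{D}^\perp$); second, verify that the port equations \eqref{eq:ports} reproduce System \ref{Sys:pHDCv}; third, read off the power-balance identity. The bulk of the work lies in the first stage, since the equivalence in the second stage and the power balance in the third are essentially substitutions once the structure is in place.

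\textbf{Stage 1: $\spa{D}\subseteq\spa{D}^\perp$.} First I would fix an arbitrary element $[[f_\omega,\vect{\Bflo}],[e_\omega,\vect{\Beff}]]\in\spa{D}$ and compute the power-product \eqref{eq:PowerProduct} of it with itself, showing it vanishes. Substituting the defining relation $\Sflo=-J(z)\Seff-\Reff-B\Peff$ (and the companions $\Rflo=\Seff$, $\Pflo=B^T\Seff$) into $\int_\omega\Sflo\cdot\Seff+\Rflo\cdot\Reff+\Pflo\cdot\Peff\,\D x+\vect{\Bflo}\cdot\vect{\Beff}$, the $\Reff$ and $\Peff$ contributions cancel pairwise by construction of the block matrix, leaving $-\int_\omega(J(z)\Seff)\cdot\Seff\,\D x+\vect{\Bflo}\cdot\vect{\Beff}$. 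The key point is that the skew-adjointness argument of Corollary \ref{Cor:JRProp} was carried out \emph{assuming boundary terms vanish}; here the boundary terms are precisely what the boundary port \eqref{eq:BoundaryPort} collects. So the plan is to redo the integration by parts in the bilinear form $(\Seff,J(z)\Seff)$ \emph{without} discarding the boundary contributions, and check that the accumulated boundary terms at $x=L$ and $x=0$ are exactly $-\vect{\Bflo}\cdot\vect{\Beff}=-(\BfloL\BeffL+\BfloZ\BeffZ)$. This is where the specific form of \eqref{eq:BoundaryPort}, with the total-enthalpy factor $\frac{e+p}{\rho}$ in the effort and the mass flow $\rho v$ in the flow, must match the boundary residue of the three derivative terms $\partial_x\Ham_\upv$, $\partial_x(\frac{e}{\rho}\Ham_\upv)$ and $p\partial_x(\frac{1}{\rho}\Ham_\upv)$. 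Since this computation is the content of Appendix \ref{Appendix2}, I would invoke the boundary-port derivation there rather than repeat it.

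\textbf{Stage 1: $\spa{D}^\perp\subseteq\spa{D}$.} For the reverse inclusion I would take $[\tilde f,\tilde e]\in\spa{D}^\perp$ and exploit that $\langle\langle[\tilde f,\tilde e],[f,e]\rangle\rangle=0$ for \emph{all} $[f,e]\in\spa{D}$. Testing against elements supported in the interior $\omega$ (smooth test functions vanishing at the boundary) forces the interior algebraic relations, i.e. the block-matrix equation, on $[\tilde f,\tilde e]$, using the non-degeneracy of the power-product and the skew-adjointness of $J(z)$ from Corollary \ref{Cor:JRProp}; the regularity requirement $[p\Seffc,\frac{e}{\rho}\Seffb,\frac{1}{\rho}\Seffb]\in\spa{H}^1(\omega)^3$ is what makes the integration by parts legitimate and must be carried along. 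Then, choosing test elements with nontrivial boundary data recovers the boundary relations \eqref{eq:BoundaryPort} for the tilded variables. \textbf{The hard part will be} this surjectivity-type step: one must argue that the two scalar boundary conditions at each endpoint genuinely pin down $\tBflo$ and $\tBeff$ from the trace of $\Seff$, which hinges on the invertibility of the $2\times 3$ boundary maps on the relevant trace data and on the subsonic Assumption \ref{as:SubSonic} guaranteeing the enthalpy factor is well-behaved.

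\textbf{Stages 2 and 3.} For the equivalence, substituting $\Sflo=-E(z)\partial_t z$, $\Seff=\frac{\delta\Ham}{\delta z}(z)=\eff(z)$, $\Reff=-R(z)\Rflo=-R(z)\Seff$ and $\Peff=T_\infty$ into the interior block relation of $\spa{D}$ collapses, row by row, to the three equations of System \ref{Sys:pHDCv}, with $Bu=B\Peff$; here I would use that $E(z)=\so_3$ so condition \eqref{Cond:Volker} holds trivially and $\eff=\frac{\delta\Ham}{\delta z}$. Finally, the power balance follows by evaluating \eqref{eq:PowerProduct} on the solution: the $\spa{F}_\upS$-$\spa{E}_\upS$ term gives $-\int_\omega(\partial_t z)^T\eff(z)\,\D x=-\frac{\D}{\D t}\Ham(z)$, the resistive term gives the dissipation $-(\Rflo,R(z)\Rflo)\le 0$, the external term gives the input power $B^T\eff\cdot T_\infty$, and the boundary term gives the enthalpy flux, so that $\langle[f_\omega,\vect{\Bflo}],[e_\omega,\vect{\Beff}]\rangle=0$ is exactly the statement that stored-energy rate plus dissipation equals supplied power across the external and boundary ports.
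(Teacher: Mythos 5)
Your overall strategy --- proving $\spa{D}=\spa{D}^\perp$ by double inclusion and then substituting \eqref{eq:ports} to recover System \ref{Sys:pHDCv} and the power balance --- is exactly the paper's, and your Stages 2 and 3 match its argument. There are, however, two points in Stage 1 that need repair. For $\spa{D}\subseteq\spa{D}^\perp$ you only pair an element of $\spa{D}$ with \emph{itself}; by definition, membership in $\spa{D}^\perp$ requires $\langle\langle[\tilde{f},\tilde{e}],[f,e]\rangle\rangle=0$ for \emph{all} $[f,e]\in\spa{D}$, so the self-pairing alone does not establish the inclusion as written. Since $\spa{D}$ is a linear subspace and the form \eqref{eq:Bilin} is symmetric, you could recover the general statement by polarization, but that step must be stated; the paper sidesteps the issue by taking two independent elements $[[f_\omega,\vect{\Bflo}],[e_\omega,\vect{\Beff}]]$ and $[[\tilde{f}_\omega,\vect{\tBflo}],[\tilde{e}_\omega,\vect{\tBeff}]]$ of $\spa{D}$ from the outset. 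The remaining cancellation of the resistive and external cross terms and the matching of the boundary residue of $(\Seff,J(z)\tSeff)+(J(z)\Seff,\tSeff)$ against $\vect{\Bflo}\cdot\vect{\tBeff}+\vect{\Beff}\cdot\vect{\tBflo}$ proceeds as you describe, by explicit integration by parts rather than by Corollary \ref{Cor:JRProp}.

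For the reverse inclusion, which is where the paper invests most of its effort, your sketch is too thin and slightly misidentifies the difficulty. The paper tests a fixed element of $\spa{D}^\perp$ against nine specific families of elements of $\spa{D}$ (setting all but one component of $\Seff$, $\Reff$, $\Peff$ to zero and prescribing which boundary traces vanish); each choice yields one defining relation for the tilded variables, and the regularity statements $\tSeffa,\,\tSeffc,\,p\tSeffc,\,\frac{e}{\rho}\tSeffb,\,\frac{1}{\rho}\tSeffb\in\spa{H}^1(\omega)$ emerge as byproducts of the integration by parts rather than as hypotheses to be carried along. The ``hard part'' you flag --- an invertibility/surjectivity argument for the $2\times 3$ boundary maps supported by Assumption \ref{as:SubSonic} --- is not how the proof goes and is not needed: once the interior relations from the first five test choices are in place, leaving a single boundary trace of $\Seff$ free makes the relations $\tBflo|_L=-\tSeffb|_L$, $\tBflo|_0=\tSeffb|_0$ and $\tBeff|_\nu=\tSeffa|_\nu+\frac{e+p}{\rho}|_\nu\tSeffc|_\nu$ drop out directly; the subsonic assumption plays no role here.
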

\noindent The proof is given in Appendix \ref{Appendix2}. In Theorem \ref{Theo:DiracpHDC} we choose $\spa{F}=\spa{E}$ instead of $\spa{E}=\spa{F}^*$, i.e., the dual of the space of flow variables. This is possible since $\spa{F}$ has an inner-product structure, such that the inner-product can be used as the duality product, see \cite{SchJ14}.
\subsubsection{Weak Formulation and Incorporation of Boundary Conditions}\label{Sec:WeakForm}
In this section we prepare our infinite dimensional systems for structure-preserving space discretization with the finite element method. For this, we introduce a variable transformation, incorporate boundary conditions in a structure-preserving manner and set up a weak formulation. Furthermore, we prove energy dissipation and mass conservation.
\subsubsection*{Change of State Variables}
We introduce another strong pH formulation for \ref{eq:DCv}, which is more suitable for structure-preserving approximation. The variable change from the velocity $v$ to the mass flow $m=\rho v$ leads to $m$ being state and effort at the same time. This simplifies the proof of the energy dissipation of a pH system and the choice of finite dimensional state and test spaces, see Section \ref{Sec:SpaceDis}. Choosing $\rho$, $m$ and $e$ as states for the non-isothermal Euler equations already in the beginning of this section leads to a different pH formulation. After the transformation the pH formulation of \ref{eq:DCv} takes the following form.
\begin{msystem}[$E_{pH}$]
Another port-Hamiltonian formulation of \ref{eq:DCv} with $z=[\rho\quad m\quad e]^T$ is given by
\begin{align*}
E(z)\partial_tz&=(J(z)-R(z))\eff(z)+Bu,\\
y&=B^T\eff(z),
\end{align*}
with effort function $\eff(z)=[\frac{m^2}{2\rho^2}\quad m\quad 1]^T$ and system operators
\usetagform{simple}
\begin{align}\label{Sys:pHDCm}\tag{$E_{pH}$}
\begin{split}
E(z)&=\begin{bmatrix} 1&0&0\\ -\frac{m}{\rho^2} & \frac{1}{\rho} &0\\0&0&1\end{bmatrix},\quad R(z)=\begin{bmatrix}0&0&0\\0&0&0\\0&0&\frac{k_\omega}{d}T\end{bmatrix},\quad B=\begin{bmatrix}0\\0\\\frac{k_\omega}{d}\end{bmatrix},\\
J(z)&=\begin{bmatrix}0&-D_x&0\\-D_x&0&-\frac{\lambda m|m|}{2d\rho^2}-\frac{e}{\rho}D_x-\frac{1}{\rho}D_xp\\0&\frac{\lambda m|m|}{2d\rho^2}-D_x\frac{e}{\rho}-pD_x\frac{1}{\rho}&0
\end{bmatrix}.
\end{split}
\end{align}
The input is given as $u=T_\infty$. The Hamiltonian is $\Ham(z)=\int_\omega\frac{m^2}{2\rho}+e\,\D x$.
\end{msystem}
\noindent To derive this system we start from System \ref{Sys:pHDCv} and make use of the variable transformation $v=\frac{m}{\rho}$, see \cite{LilSM20}. Plugging this in and resolving the non-linearity $\partial_t\frac{m}{\rho}=-\frac{m}{\rho^2}\partial_t\rho+\frac{1}{\rho}\partial_tm$ yields System \ref{Sys:pHDCm} with Hamiltonian $\Ham(z)~=~\int_\omega\frac{m^2}{2\rho}+e\,\D x$. As only the operator $E(z)$ is changed in comparison to \ref{Sys:pHDCv}, the operators $J(z)$ and $R(z)$ are still skew-adjoint as well as semi-elliptic and self-adjoint, respectively. We are left with checking condition \eqref{Cond:Volker}, i.e.,
\begin{align*}
E^T(z)\eff(z)=\begin{bmatrix}1 & -\frac{m}{\rho^2}&0\\0&\frac{1}{\rho}&0\\0&0&1\end{bmatrix}\begin{bmatrix}
\frac{m^2}{2\rho^2}\\m\\1\end{bmatrix}=\begin{bmatrix}-\frac{m^2}{2\rho^2}\\\frac{m}{\rho}\\1\end{bmatrix}=\begin{bmatrix}
\frac{\delta\Ham}{\delta \rho}(z)\\\frac{\delta\Ham}{\delta m}(z)\\\frac{\delta\Ham}{\delta e}(z)
\end{bmatrix},
\end{align*}
which is also fulfilled.
\begin{remark}
The variable transformation $v=\frac{m}{\rho}$ has no influence on the boundary port, as
\begin{align}\label{eq:BoundaryPortM}
\begin{bmatrix}\BfloL\\\BeffL\end{bmatrix}=\begin{bmatrix}0 & -1 &0\\ 1&0& \frac{e+p}{\rho}|_L\end{bmatrix}\left.\begin{bmatrix}
\frac{m^2}{2\rho^2}\\m\\1\end{bmatrix}\right|_L, \quad \begin{bmatrix}\BfloZ\\\BeffZ\end{bmatrix}=\begin{bmatrix}0 &1 &0\\ 1&0& \frac{e+p}{\rho}|_0\end{bmatrix}\left.\begin{bmatrix}
\frac{m^2}{2\rho^2}\\m\\1\end{bmatrix}\right|_0.
\end{align}
\end{remark}

\subsubsection*{Structure-Preserving Incorporation of Boundary Conditions}\label{sec:BoundaryConditions}
\noindent The handling of the boundary conditions is especially important for model reduction, see Section \ref{Sec:SPApprox}. Stating the boundary conditions as additional algebraic equations makes sure that they are still fulfilled after model reduction. To preserve the structure these conditions are expressed through the boundary port variables, see, e.g., \cite{LeGZM05} and \cite{Vil17_Dis}. We consider boundary conditions for the variables $\rho$, $m$ and $e$. Although there is some freedom in choosing the boundary conditions, we demand that one of the inflow conditions is stated in terms of the internal energy density, i.e., $e(t,0)=\eZ$. This is important for our network coupling in Section \ref{sec:Coupling}. The other two conditions can be chosen in terms of $\rho$ and $m$. Similar to \cite{LilSM20} we introduce boundary operators.
\begin{definition}
Let $\phi\in\spa{H}^1(\omega)$ and let $\phiZ$ and $\phiL$ denote the evaluations of $\phi$ at the inflow and the outflow of the pipe. Furthermore, let the boundary condition $\eZ$ be given. Then  we define
\begin{align*}
T_\upm:\spa{H}^1(\omega)\rightarrow\mathbb{R}^2,\quad T_\upm\phi=\begin{bmatrix}
-\phiL\\\phiZ\end{bmatrix}\quad \text{and} \quad
T_\upe:\spa{H}^1(\omega)\rightarrow\mathbb{R},\quad T_\upe\phi=\eZ\phiZ.
\end{align*}
\end{definition}
\noindent With these operators and the boundary port variables \eqref{eq:BoundaryPortM} we are able to state boundary conditions for $m$ and $e$ in dependence of the effort function.
\begin{close}\label{Prop:BoundCond}
Assuming $\BfloZ\neq 0$. The boundary conditions $e(t,0)=\eZ$, $m(t,0)=~m|_0$ and $m(t,L)=m|_L$ can be stated as
\begin{align*}
\vect{0}&=-T_\upm m+\vect{\Bflo}\quad \text{and}\quad 0=-T_e 1+\frac{\eZ}{\BfloZ}\BfloZ.
\end{align*}
\end{close}
\noindent To preserve the pH structure, we need to make the equations stating the boundary conditions dependent on the respective effort variables, i.e., $\eff_2=m$ and $\eff_3=1$.
First, we use the equations for the boundary flows $\vect{\Bflo}$ as they can be used to state boundary conditions for the mass flow $m$. Recalling \eqref{eq:BoundaryPortM}, we have
\begin{align*}
m|_L=-\BfloL\quad \text{and} \quad m|_0=\BfloZ\quad \Rightarrow \vect{0}=-T_\upm m+\vect{\Bflo}.
\end{align*}
Using the assumption that $\BfloZ\neq 0$, we can state the boundary condition for the internal energy as
\begin{align*}
\eZ=\frac{\eZ}{\BfloZ}\BfloZ \quad \text{and} \quad \eZ=\eZ 1=T_e 1\quad \Rightarrow 0=-T_e 1+\frac{\eZ}{\BfloZ}\BfloZ.
\end{align*}
\noindent The reformulation of the boundary condition for the internal energy density might seem like unnecessarily blowing up the system, but by doing this the boundary effort variables act as an output in the pH sense.
\subsubsection*{Weak Formulation}
\noindent To use Galerkin approximation in space, we set up a weak pH formulation for \ref{Sys:pHDCm}.
\begin{weak}\label{eq:VarPrinc}
Let $\BfloZ\neq 0$ and $\eZ\neq 0$. Find $\rho\in\spa{C}^1([0,t_f],\spa{L}^2(\omega))$, $m,\,e\in\spa{C}^1([0,t_f],\spa{H}^1(\omega))$, $\blam_\upm\in\spa{C}^0([0,t_f],\mathbb{R}^2)$, $\lambda_\upe\in\spa{C}^0([0,t_f],\mathbb{R})$, $\vect{\Bflo}\in\spa{C}^1([0,t_f],\mathbb{R}^2)$ fulfilling
\begin{subequations}
\begin{align}
(\partial_t\rho,\psi)&=-(\partial_xm,\psi),\nonumber\\
(\partial_t\frac{m}{\rho},\varphi)&=(\frac{m^2}{2\rho^2},\partial_x\varphi)-(\frac{e}{\rho}\partial_x1,\varphi)-(\frac{1}{\rho}\partial_x(p1),\varphi)-(\frac{\lambda}{2d}\frac{|m|m}{\rho^2}1,\varphi)+\blam_\upm\cdot T_\upm \varphi ,\label{eq:mBound}\\
(\partial_te,\phi)&=(\frac{e}{\rho}m,\partial_x\phi)+(\frac{m}{\rho},\partial_x(\phi p))+
(\frac{\lambda}{2d}\frac{|m|m^2}{\rho^2},\phi)-(\frac{k_\omega}{d}T1,\phi)\nonumber\\&\quad+(\frac{k_\omega}{d}T_\infty,\phi)+\lambda_e\cdot T_\upe \phi+\frac{e+p}{\rho}|_L\phiL\BfloL,\label{eq:eBound}\\
0&=-T_\upm m +\vect{\Bflo},\label{eq:lamMBound}\\
0&=-T_\upe 1 +\frac{\eZ}{\BfloZ}\BfloZ\label{eq:lamEBound},
\end{align}
\end{subequations}
for all $\psi\in \spa{L}^2(\omega)$ and $\varphi,\,\phi\in \spa{H}^1(\omega)$, $\omega=(0,L)$ and $t>0$.
\end{weak}
\noindent The weak formulation can be deduced as follows. Testing the equations in System \ref{Sys:pHDCm} with $\psi\in\spa{L}^2(\omega)$ and $\varphi,\,\phi\in\spa{H}^1(\omega)$, integrating over the pipe $\omega$ yields,
\begin{subequations}
\begin{align}
(\partial_t\rho,\psi)&=-(\partial_xm,\psi),\nonumber\\
\label{eq:MassPI}
(-\frac{m}{\rho^2}\partial_t\rho+\frac{1}{\rho}\partial_tm,\varphi)&=-(\partial_x\frac{m^2}{2\rho^2},\varphi)-(\frac{e}{\rho}\partial_x1,\varphi)-(\frac{1}{\rho}\partial_x(p1),\varphi)-(\frac{\lambda}{2d}\frac{|m|m}{\rho^2}1,\varphi),\\
\label{eq:EnergyPI}
(\partial_te,\phi)&=-(\partial_x(\frac{e}{\rho}m),\phi)-(p\partial_x(\frac{m}{\rho}),\phi )+
(\frac{\lambda}{2d}\frac{|m|m^2}{\rho^2},\phi)-(\frac{k_\omega}{d}T1,\phi)+(\frac{k_\omega}{d}T_\infty,\phi).
\end{align}
\end{subequations}
Using partial integration on the first term in \eqref{eq:MassPI} and on the first two terms in \eqref{eq:EnergyPI} leads to, i.e.,
\begin{align*}
(\partial_t\rho,\psi)&=-(\partial_xm,\psi),\nonumber\\
(-\frac{m}{\rho^2}\partial_t\rho+\frac{1}{\rho}\partial_tm,\varphi)&=(\frac{m^2}{2\rho^2},\partial_x\varphi)-(\frac{e}{\rho}\partial_x1,\varphi)-(\frac{1}{\rho}\partial_x(p1),\varphi)-(\frac{\lambda}{2d}\frac{|m|m}{\rho^2}1,\varphi)-[\frac{m^2}{2\rho^2}\varphi]|_0^L,\\
(\partial_te,\phi)&=(\frac{e}{\rho}m,\partial_x\phi)+(\frac{m}{\rho},\partial_x(\phi p))+
(\frac{\lambda}{2d}\frac{|m|m^2}{\rho^2},\phi)-(\frac{k_\omega}{d}T1,\phi)+(\frac{k_\omega}{d}T_\infty,\phi)\\&\quad-[m\frac{(e+p)}{\rho}\phi]|_0^L.
\end{align*}
Furthermore, we need to couple the equations for the boundary conditions to the weak formulation. Starting with \eqref{eq:mBound} we define the associated Lagrange multiplier as $\blam_\upm=\begin{bmatrix}\frac{m^2}{2\rho^2}|_L&\frac{m^2}{2\rho^2}|_0\end{bmatrix}^T$.
Thus, the boundary term in the equation for the mass flux \eqref{eq:mBound} can be reformulated to
\begin{align*}
-[\frac{m^2}{2\rho^2}\varphi]|_0^L=\frac{m^2}{2\rho^2}|_0\varphi|_0-\frac{m^2}{2\rho^2}|_L\varphi|_L=\blam_\upm\cdot T_\upm(\varphi).
\end{align*}
Finally, we also have to couple \eqref{eq:eBound} to our system via a Lagrange multiplier. We define $\lambda_\upe=~m|_0\frac{1+p/e}{\rho}|_0$ using the assumption that $\eZ\neq 0$. Then the boundary term in the energy balance \eqref{eq:eBound} takes the form,
\begin{align*}
-\left.[m\frac{(e+p)}{\rho}\phi]\right|_0^L=\lambda_\upe\cdot T_\upe(\phi)+\frac{e+p}{\rho}|_L\phiL\BfloL.
\end{align*}
\noindent Similar to \cite{LilSM20} weak boundary conditions can be stated using the Lagrange multipliers $\blam_\upm$.
\begin{theorem}\label{Theo:MassEn}
Let $z=[\rho\quad m\quad e]^T$ be a solution of the Weak Formulation \ref{eq:VarPrinc}. Then the following energy dissipation and mass conservation properties hold,
\begin{align*}%\label{Prop:EM}
\frac{\D}{\D t}\Ham(z)&\leq(\Pflo,\Peff) +\vect{\Bflo}^T \vect{\Beff}, \quad \frac{\D}{\D t}\int_\omega\rho\,\D x=\left.-[m]\right|_0^L.
\end{align*}
\end{theorem}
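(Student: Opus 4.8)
The plan is to test the three equations of the Weak Formulation \ref{eq:VarPrinc} with the three components of the effort function $\eff(z)=[\frac{m^2}{2\rho^2}\quad m\quad 1]^T$ and to add the results. The motivation is the chain rule combined with condition \eqref{Cond:Volker}: differentiating $\Ham(z)=\int_\omega\frac{m^2}{2\rho}+e\,\D x$ in time and inserting the variational derivatives gives
\begin{align*}
\frac{\D}{\D t}\Ham(z)=\Big(\frac{\delta\Ham}{\delta z}(z),\partial_tz\Big)=(\eff(z),E(z)\partial_tz)=\Big(\partial_t\rho,\tfrac{m^2}{2\rho^2}\Big)+\Big(\partial_t\tfrac{m}{\rho},m\Big)+(\partial_te,1),
\end{align*}
where the last equality uses that the second component of $E(z)\partial_tz$ is exactly $\partial_t\frac{m}{\rho}$. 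Hence I would choose $\psi=\frac{m^2}{2\rho^2}\in\spa{L}^2(\omega)$ in the density equation, $\varphi=m\in\spa{H}^1(\omega)$ in \eqref{eq:mBound}, and $\phi=1\in\spa{H}^1(\omega)$ in \eqref{eq:eBound}; their sum reproduces $\frac{\D}{\D t}\Ham(z)$ on the left-hand sides. (In one space dimension $m\in\spa{H}^1(\omega)\hookrightarrow\spa{L}^\infty(\omega)$ together with $\rho$ bounded away from zero ensures $\psi\in\spa{L}^2(\omega)$, so the test is admissible.)

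Next I would collect the volume terms on the right-hand sides, where the skew-adjoint structure inherited from $J(z)$ (Corollary \ref{Cor:JRProp}) shows up as pairwise cancellations. The two advection terms $-(\partial_xm,\frac{m^2}{2\rho^2})$ and $(\frac{m^2}{2\rho^2},\partial_xm)$ cancel by symmetry of the inner product; using $\partial_x1=0$ the pressure terms reduce to $-(\frac{1}{\rho}\partial_xp,m)$ and $(\frac{m}{\rho},\partial_xp)$, which are identical and cancel; and the two friction terms $\mp(\frac{\lambda}{2d}\frac{|m|m^2}{\rho^2},1)$ cancel because of their opposite signs. What remains from the volume is the resistive term $-(\frac{k_\omega}{d}T,1)$ and the external term $(\frac{k_\omega}{d}T_\infty,1)$.

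The step I expect to be the main obstacle is the reassembly of the three boundary contributions into the boundary pairing $\vect{\Bflo}^T\vect{\Beff}$, namely the multiplier term $\blam_\upm\cdot T_\upm m$, the multiplier term $\lambda_\upe\,T_\upe 1$, and the explicit term $\frac{e+p}{\rho}|_L\,\phiL\,\BfloL$ with $\phi=1$. Inserting $\blam_\upm=[\frac{m^2}{2\rho^2}|_L\;\;\frac{m^2}{2\rho^2}|_0]^T$, $T_\upm m=[-m|_L\;\;m|_0]^T$ and $\lambda_\upe=m|_0\frac{1+p/e}{\rho}|_0$ together with $T_\upe 1=\eZ$ (so that the factor $e|_0$ cancels against the denominator, which is precisely where the hypothesis $\eZ\neq0$ is used), and then regrouping by the two endpoints, I would verify against the boundary port variables \eqref{eq:BoundaryPortM} that the terms at $x=L$ sum to $\BfloL\BeffL$ and those at $x=0$ sum to $\BfloZ\BeffZ$. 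This bookkeeping is the delicate part; everything else is routine.

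Collecting the pieces yields
\begin{align*}
\frac{\D}{\D t}\Ham(z)=\vect{\Bflo}^T\vect{\Beff}+\Big(\frac{k_\omega}{d}T_\infty,1\Big)-\Big(\frac{k_\omega}{d}T,1\Big).
\end{align*}
Identifying the external term with the external-port power $(\Pflo,\Peff)$, and noting $(\frac{k_\omega}{d}T,1)=\int_\omega\frac{k_\omega}{d}T\,\D x\geq0$ since $k_\omega,T\geq0$ and $d>0$, gives the claimed energy inequality. Finally, the mass balance is immediate: testing the density equation with $\psi=1\in\spa{L}^2(\omega)$ gives $\frac{\D}{\D t}\int_\omega\rho\,\D x=(\partial_t\rho,1)=-(\partial_xm,1)=-[m]|_0^L$.
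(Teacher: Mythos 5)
Your proof follows essentially the same route as the paper's: test the three equations with the components of the effort function $\eff(z)=[\frac{m^2}{2\rho^2}\;\; m\;\; 1]^T$, use condition \eqref{Cond:Volker} to identify the sum of the left-hand sides with $\frac{\D}{\D t}\Ham(z)$, cancel the advection, pressure and friction terms pairwise, reassemble the multiplier and boundary terms into $\vect{\Bflo}^T\vect{\Beff}$ via the algebraic equations \eqref{eq:lamMBound}--\eqref{eq:lamEBound}, and drop $-(\frac{k_\omega}{d}T1,1)\leq 0$; the mass balance via $\psi=1$ is identical. The one point where you diverge is the admissibility of $\psi=\frac{m^2}{2\rho^2}$ in the density equation: you secure it by the embedding $\spa{H}^1(\omega)\hookrightarrow\spa{L}^\infty(\omega)$ together with an extra hypothesis that $\rho$ is bounded away from zero, which is not part of the theorem's assumptions, whereas the paper (following \cite{LilSM20}) instead introduces the Riesz representative $\varepsilon(\rho,m)\in\spa{L}^2(\omega)$ of the functional $\psi\mapsto(\frac{m^2}{2\rho^2},\psi)$, tests with $\varepsilon(\rho,m)$, and cancels $-(\partial_xm,\varepsilon(\rho,m))$ against $(\frac{m^2}{2\rho^2},\partial_xm)=(\varepsilon(\rho,m),\partial_xm)$. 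The Riesz device is worth adopting because it is exactly the argument that survives at the discrete level, where $\frac{m_\uph^2}{2\rho_\uph^2}$ generally does not lie in $\spa{V}_\uprho=\partial_x\spa{V}_\upm$ and only its representative can be used as a test function; your boundedness argument would not transfer there. Everything else, including your identification of the three boundary contributions with $\BfloL\BeffL+\BfloZ\BeffZ$ using \eqref{eq:BoundaryPortM}, matches the paper's computation.
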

\begin{proof}
Mass conservation easily follows from testing the Weak Formulation \ref{eq:VarPrinc} with $[1\quad 0\quad 0]^T$, i.e.,
\begin{align*}
\frac{\D}{\D t}\int_\omega\rho\D x=(\partial_t\rho,1)=-(\partial_xm,1)=(m,\partial_x1)-\left.[m]\right|_0^L=\left.-[m]\right|_0^L.
\end{align*}
To show energy dissipation, we need to test the Weak Formulation \ref{eq:VarPrinc} with the effort function $\eff$, as
\begin{align*}
\frac{\D}{\D t}\Ham(z)=(\partial_tz,\frac{\delta \Ham}{\delta z})=(\partial_tz,E^T(z)\eff(z))=(E(z)\partial_tz,\eff(z)).
\end{align*}
The second equality follows from condition \eqref{Cond:Volker}. To finish the proof using the Weak Formulation \ref{eq:VarPrinc}, we need that $\frac{m^2}{2\rho^2}\in \spa{L}^2(\omega)$ and $m,\,1\in \spa{H}^1(\omega)$ for each $t\geq 0$. As the latter is fulfilled, we only need to take care of the first. Analogously to \cite{LilSM20}, we use the Riesz Representation Theorem \cite{Dob10}. As $\spa{L}^2(\omega)=\partial_x\spa{H}^1(\omega)$, there exists a $\varepsilon(\rho,m)\in \spa{L}^2(\omega)$, such that $(\frac{m^2}{2\rho^2},\psi)=(\varepsilon(\rho,m),\psi)$ for all $\psi\in \spa{L}^2(\omega)$.
\begin{align*}
\frac{\D}{\D t}\Ham(z)&=(E(z)\partial_tz,\eff)=(\partial_t\rho,\frac{m^2}{2\rho^2})+(-\frac{m}{\rho^2}\partial_t\rho+\frac{1}{\rho}\partial_tm,m)+(\partial_te,1)\\
&{\overset{(1)}{=}}(\partial_t\rho,\varepsilon(\rho,m))+(-\frac{m}{\rho^2}\partial_t\rho+\frac{1}{\rho}\partial_tm,m)+(\partial_te,1)\\
&{\overset{(2)}{= }}-(\partial_xm,\varepsilon(\rho,m))+(\frac{m^2}{2\rho^2},\partial_xm)-(\frac{e}{\rho}\partial_x1,m)-(\frac{1}{\rho}\partial_x(p1),m)-(\frac{\lambda}{2d}\frac{|m|m}{\rho^2}1,m)\\&\quad +\blam_\upm\cdot T_\upm m+(\frac{e}{\rho}m,\partial_x1)+(\frac{1}{\rho}m,\partial_x(1 p))+(\frac{\lambda}{2d}\frac{|m|m^2}{\rho^2},1)\\&\quad -(\frac{k_\omega}{d}T1,1)+(\frac{k_\omega}{d}T_\infty,1)+\lambda_\upe\cdot T_\upe 1+\frac{e+p}{\rho}|_L\BfloL1\\
& {\overset{(3)}{=}}-(\partial_xm,\varepsilon(\rho,m))+(\varepsilon(\rho,m),\partial_xm)+\blam_\upm\cdot\vect{\Bflo}\\&\quad -(\frac{k_\omega}{d}T1,1)+(\frac{k_\omega}{d}T_\infty,1)+\lambda_\upe\frac{\eZ}{\BfloZ}\BfloZ+\frac{e+p}{\rho}|_L\BfloL 1\\
&{\overset{(4)}{\leq}}(\frac{k_\omega}{d}T_\infty,1) +\vect{\Bflo}^T(\blam_\upm+[\frac{e+p}{\rho}|_L\quad\lambda_\upe\eZ]^T)\\
&{\overset{(5)}{=}}(\Pflo,\Peff)+\vect{\Bflo}^T\vect{\Beff},
\end{align*}
In the equalities (1) and (3) we use the existence of $\varepsilon(\rho,m)$ by the Riesz Representation Theorem. Furthermore, in (2) we inserted the system equations and in (3) the algebraic equations \eqref{eq:lamMBound} and \eqref{eq:lamEBound}. For the inequality (4) we utilized the positive definiteness of $(\frac{k_\omega}{d}T1,1)$, which is given as $k_\omega$, $T$ and $d$ are bigger zero. In the last equality (5) we used the definitions of the environment and boundary ports from Theorem \ref{Theo:DiracpHDC} and the definitions of the Lagrange multipliers from the derivation of Weak Formulation \ref{eq:VarPrinc}.
\end{proof}
\noindent From this it follows that mass conservation and energy dissipation are determined by the values of the state at the inflow and outflow of the pipe, i.e., the boundary port variables. Mass conservation is given, when the mass flow is equal at both pipe ends. Energy dissipation, furthermore, depends on the ambient temperature $T_\infty$, which acts as an input from the environment through the external port.

\subsection{Port-Hamiltonian Formulation for Pipe Networks}\label{Sec:NetPH}
As gas networks usually do not consist of only one pipe, this section deals with coupling the single pipe systems to a network. As we couple pH systems the coupling conditions not only should lead to a well-posed problem, but also preserve the structure. Thus, the next sections are concerned with incorporating energy-preserving coupling conditions in a structure-preserving way into the network system. 
\subsubsection{Network Toplogy and Function Spaces}\label{sec:NetTop}
In this section we follow closely the notation and wording of \cite{EggK18} and \cite{LilSM20}. We consider connected, finite and directed graphs $\mathcal{G}=(\mathcal{N},\mathcal{E},L)$ consisting of a set of nodes $\mathcal{N}=\{\nu_1,\dots,\nu_l\}$, $l\in\mathbb{N}$, and a set of edges $\mathcal{E}=\{\omega_1,\dots,\omega_k\}\subset\mathcal{N}\times\mathcal{N}$, $k\in\mathbb{N}$. Every edge $\omega$ has a length $L^\omega$, $0<L^\omega<\infty$, which we collect in the set $L=\{L^{\omega_1},\dots,L^{\omega_k}\}$. Furthermore, the structure of $\mathcal{G}$ is given through an incindence mapping, which describes the direction of the edges between nodes.
\begin{definition} \label{Def:IncMap}
The incidence mapping of node $\nu\in\mathcal{N}$ with respect to the edge $\omega\in\mathcal{E}$ is defined by
\begin{align*}
n^\omega[\nu]=\begin{cases}1 &\text{ for } \omega=(\nu,\bar{\nu}) \text{ for some } \bar{\nu}\in\mathcal{N},\\
						-1 &\text{ for } \omega=(\bar{\nu},\nu) \text{ for some } \bar{\nu}\in\mathcal{N},\\
						0 &\text{ else.} \end{cases}
\end{align*}
\end{definition}
\noindent Furthermore, we need the set of edges adjacent to node $\nu$, i.e., $\mathcal{E}(\nu)=\{\omega\in\mathcal{E}:\omega=(\nu,\bar{\nu})\text{ or } \omega=(\bar{\nu},\nu)\}$, the set of interior or coupling nodes $\mathcal{N}_0\subset\mathcal{N}$, and the set of boundary nodes $\mathcal{N}_\partial=\mathcal{N}\backslash\mathcal{N}_0$. An example graph is given in Figure \ref{fig:ExGraph}. Lastly, we make two assumptions on the network topology. 
\begin{assum}
\label{as:Graph}
Let the following two assumptions hold.
\begin{itemize}
\item The graph $\mathcal{G}=(\mathcal{N},\mathcal{E})$ is connected, finite and directed.
\item It holds that $|\mathcal{E}(\nu)|=1$ for $\nu\in\mathcal{N}_\partial$.
\end{itemize}
\end{assum}
\begin{figure}[t]
\centering
\begin{tikzpicture}
\draw[->,line width=1pt] (0,-5) -- (3,-5); 
\draw[->,line width=1pt] (3,-5) -- (5,-3.5); 
\draw[->,line width=1pt] (3,-5) -- (5,-6.5);
\fill (0,-5) circle (0.1);
\fill (3.05,-5) circle (0.1);
\fill (5.05,-3.45) circle (0.1);
\fill (5.05,-6.55) circle (0.1);
\node at (0,-5.5) {$\nu_1$};
\node at (3.05,-5.5) {$\nu_2$};
\node at (5.05,-2.95) {$\nu_3$};
\node at (5.05,-7.05) {$\nu_4$};
\node at (1.5,-5.5) {$\omega_1$};
\node at (4.5,-4.5) {$\omega_2$};
\node at (4.5,-5.5) {$\omega_3$};
\end{tikzpicture}
\caption{We have $\mathcal{N}=\{\nu_1,\nu_2,\nu_3,\nu_4\}$ and $\mathcal{E}=\{\omega_1,\omega_2,\omega_3\}$, respectively. The incidence mapping yields $n^{\omega_1}[\nu_1]=1$, $n^{\omega_1}[\nu_2]=-1$, $n^{\omega_2}[\nu_2]=1$,$n^{\omega_2}[\nu_3]=-1$, $n^{\omega_3}[\nu_2]=1$ and $n^{\omega_3}[\nu_4]=-1$. The set of boundary nodes is $\mathcal{N}_\partial=\{\nu_1,\nu_3,\nu_4\}$, such that $\mathcal{N}_0=\{\nu_2\}$.}
\label{fig:ExGraph}
\end{figure}
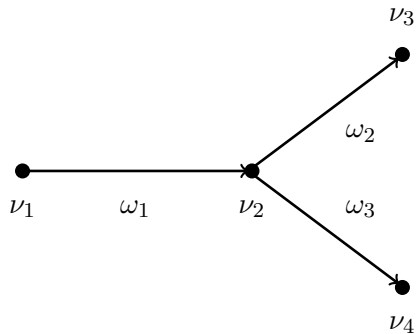
\noindent Let $\omega\in\mathcal{E}$ be identified with the interval $(0,L^\omega)$, then the spatial domain on the network is given by $\Omega=\{x:x\in\omega, \text{ for } \omega\in\mathcal{E}\}$. We define the space of square-integrable functions on the network as\linebreak $\spa{L}^2(\mathcal{E})=\{\psi:\Omega\to\mathbb{R}\text{ with } \psi|_\omega\in \spa{L}^2(\omega)\text{ for all }\omega\in\mathcal{E}\}$. For $\psi,\,\phi\in \spa{L}^2(\mathcal{E})$ the inner-product and the associated norm are given by
$(\psi,\phi)_\mathcal{E}=\sum_{\omega\in\mathcal{E}}(\psi|_\omega,\phi|_\omega)$ and $||\psi||_\mathcal{E}=\sqrt{(\psi,\psi)_\mathcal{E}},
$ respectively. Moreover, we introduce the broken operator $\partial_x'$, describing the edgewise weak derivative, i.e., $(\partial_x'\psi)|_\omega=\partial_x\psi|_\omega \text{ for all } \omega\in\mathcal{E}.$
For convenience, we rename $\partial_x'$ to $\partial_x$. From the context, it will be clear which partial derivative is used. Thus, the space of square-integrable functions with square-integrable broken derivative on $\mathcal{E}$ is given by
$\spa{H}^1_{pw}(\mathcal{E})=\{\psi\in \spa{L}^2(\mathcal{E}):\partial_x\psi\in~ \spa{L}^2(\mathcal{E})\}$. The function space of piecewise smooth functions is given by
$\spa{C}^k_{pw}(\mathcal{E})=\{\psi:\Omega\rightarrow\mathbb{R}\text{ with } \psi|_\omega\in \spa{C}^k(\omega)\text{ for all } \omega\in\mathcal{E}\}$ for $k\geq 0$. The spatial domains for the boundary or coupling conditions are $\mathcal{N}_\partial$ and $\mathcal{N}_0$, respectiveky. The functions spaces acting on these sets are $\mathbb{R}^{|\mathcal{N}_\partial|}$ and $\mathbb{R}^{|\mathcal{N}_0|}$, which are Euclidean vector spaces equipped with the standard scalar product and norm.
\subsubsection{Coupling Conditions and the Port-Hamiltonian Framework}\label{sec:Coupling}
We consider the following assumption.
\begin{assum}\label{as:NoFlowChange}
There will never be a change in flow direction. 
\end{assum}
\noindent This assumption is often used when working with the compressible non-isothermal Euler equations \ref{eq:DCv}, see, e.g., \cite{Her08} and \cite{LanM18}. In contrast to the coupling of the isothermal Euler-like equations, which has been thoroughly studied in literature, there are only few sources dealing with the coupling of \ref{eq:DCv}. The paper \cite{LanM18} gives a good overview on the coupling conditions at an interior node $\nu\in\mathcal{N}_0$, which are used in the literature, and sets up a well-defined set of coupling conditions for \ref{eq:DCv}, i.e.,
\begin{itemize}[leftmargin=2cm]
\item[$(\upM)$] conservation of mass: $\sum_{\omega\in\mathcal{E}(\nu)}n^{\omega}[\nu]\mnu=0$, 
\item[$(\upH)$] equality of specific total enthalpy: $\hnu=h^*$, $\omega\in\mathcal{E}(\nu)$,
\item [($\upS_{out}$)] equality of outgoing entropy: $\snu=s^*$, $\omega\in\mathcal{I}_{out}(\nu)$ with the entropy mix:\\ $s^*=\frac{1}{\sum_{\omega\in\mathcal{I}_{in}(\nu)}n^{\omega}[\nu]\mnu}\sum_{\omega\in\mathcal{I}_{in}(\nu)}n^{\omega}[\nu]\mnu\snu$.
\end{itemize}
The superscript $\omega$ denotes the respective quantity on pipe $\omega$. For the coupling condition $(\upS_{out})$ we additionally need the following sets. Let $\nu\in\mathcal{N}_0$ be an inner node, we then have
\begin{align*}
\delta_{in}(\nu)&\coloneqq\{\omega\in\mathcal{E}:\exists \bar{\nu}\in\mathcal{N} \text{ with } \omega=(\bar{\nu},\nu)\},\\
\delta_{out}(\nu)&\coloneqq\{\omega\in\mathcal{E}:\exists \bar{\nu}\in\mathcal{N} \text{ with } \omega=(\nu,\bar{\nu})\},
\end{align*}
i.e., the graph-topological directions of the pipes with respect to a certain node. From these sets and the flow direction, we can define the sets $\mathcal{I}_{in}(\nu)$ and $\mathcal{I}_{out}(\nu)$ through,
\begin{align*}
\mathcal{I}_{in}(\nu)&\coloneqq\{\omega\in\delta_{in}(\nu): m^\omega\geq 0\}\cup\{\omega\in\delta_{out}(\nu):m^\omega\leq 0\},\\
\mathcal{I}_{out}(\nu)&\coloneqq\{\omega\in\delta_{in}(\nu): m^\omega< 0\}\cup\{\omega\in\delta_{out}(\nu):m^\omega> 0\},
\end{align*}
which tell us the pipes $\omega$ on which the flow travels into or out of the node $\nu$, respectively.
Due to Assumption \ref{as:NoFlowChange}, we are able to fix these sets before the simulation, i.e., $\mathcal{I}_{in}(\nu)=\delta_{in}(\nu)$  and $\mathcal{I}_{out}(\nu)=\delta_{out}(\nu)$. Otherwise, these sets would change during the simulation, which evidently has an impact on the computations and the implementation of the resulting algorithms. This is a question of ongoing research.\\
\begin{theorem}\label{Theo:Coupling} \hfill
\begin{enumerate}
\item For the Euler equations \ref{eq:DCv} we have that $(\upM)$ and $(\upH)$ imply energy conservation, i.e., $$\sum_{\omega\in\mathcal{E}(\nu)}n^{\omega}[\nu](v^{\omega}(E^{\omega}+p^{\omega}))|_\nu=0.$$
\item From $(\upM)$ and $(\upS_{out})$ it follows that the entropy per unit volume in smooth flows is conserved, i.e.,
\begin{align*}
(\upS')\quad \sum_{\omega\in\mathcal{E}(\nu)}n^{\omega}[\nu]\mnu\snu=0,\quad t>0.
\end{align*}
\end{enumerate}
\end{theorem}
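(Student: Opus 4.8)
The plan is to prove the two statements by direct substitution of the coupling conditions into the respective sums, exploiting in each case the linearity of the summation together with the mass balance $(\upM)$.

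For part~(1) the key algebraic observation is that the energy flux factors into a product of mass flow and specific total enthalpy: using $m=\rho v$ and $h=\tfrac{E+p}{\rho}$ from Assumption~\ref{as:IdealGas}, one has $v(E+p)=\rho v\cdot\tfrac{E+p}{\rho}=m\,h$, so that
\begin{align*}
\sum_{\omega\in\mathcal{E}(\nu)}n^{\omega}[\nu](v^{\omega}(E^{\omega}+p^{\omega}))|_\nu=\sum_{\omega\in\mathcal{E}(\nu)}n^{\omega}[\nu]\mnu\hnu.
\end{align*}
Condition $(\upH)$ asserts that every edge adjacent to $\nu$ carries the common enthalpy value $\hnu=h^*$, which can therefore be pulled out of the sum, leaving $h^*\sum_{\omega\in\mathcal{E}(\nu)}n^{\omega}[\nu]\mnu$; this vanishes by $(\upM)$, which finishes part~(1).

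For part~(2) I would split the sum defining the entropy per unit volume over the disjoint index sets $\mathcal{I}_{in}(\nu)$ and $\mathcal{I}_{out}(\nu)$, which by construction partition $\mathcal{E}(\nu)$. On the outgoing edges condition $(\upS_{out})$ gives $\snu=s^*$, so the outgoing contribution equals $s^*\sum_{\omega\in\mathcal{I}_{out}(\nu)}n^{\omega}[\nu]\mnu$. Rewriting $(\upM)$ as $\sum_{\omega\in\mathcal{I}_{out}(\nu)}n^{\omega}[\nu]\mnu=-\sum_{\omega\in\mathcal{I}_{in}(\nu)}n^{\omega}[\nu]\mnu$ turns this outgoing mass sum into the negative of the incoming one, while multiplying the defining relation for the entropy mix $s^*$ by its denominator yields $s^*\sum_{\omega\in\mathcal{I}_{in}(\nu)}n^{\omega}[\nu]\mnu=\sum_{\omega\in\mathcal{I}_{in}(\nu)}n^{\omega}[\nu]\mnu\snu$. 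Combining these identities, the outgoing contribution exactly cancels the incoming one, and the total sum is zero, which is $(\upS')$.

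The computations are elementary substitutions, so I expect no genuine analytic difficulty; the only points requiring care are the sign bookkeeping through the incidence numbers $n^{\omega}[\nu]$ on the sets $\mathcal{I}_{in}(\nu)$ and $\mathcal{I}_{out}(\nu)$, and the tacit nondegeneracy assumption that the denominator $\sum_{\omega\in\mathcal{I}_{in}(\nu)}n^{\omega}[\nu]\mnu$ appearing in the definition of $s^*$ is nonzero, which is precisely what makes the last identity legitimate. I would state this nondegeneracy (it holds whenever there is net inflow at $\nu$) explicitly, and otherwise present the argument as the short chain of equalities sketched above.
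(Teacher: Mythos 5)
Your proposal is correct and follows essentially the same route as the paper's proof: part (1) is the identity $v(E+p)=m\,\frac{E+p}{\rho}$ combined with $(\upH)$ and $(\upM)$ (the paper runs the same chain of equalities starting from $(\upM)$ multiplied by $h^*$), and part (2) is the same splitting over $\mathcal{I}_{in}(\nu)$ and $\mathcal{I}_{out}(\nu)$ with the entropy-mix identity and mass conservation. Your explicit remark on the nondegeneracy of the denominator in $s^*$ is a reasonable refinement the paper leaves tacit, but it does not change the argument.
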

\begin{proof}
To prove $(1)$ we start by multiplying the mass conservation at node $\nu$ with the coupling enthalpy related coupling constant $h^*$, which yields
\begin{align}\label{eq:PropMass}
0=\sum_{\omega\in\mathcal{E}(\nu)}h^*n^{\omega}[\nu]\mnu.
\end{align}
As we are working with the Euler equations we can choose the total specific enthalpy at some pipe $\omega\in\mathcal{E}(\nu)$ as $h^*$, i.e., $h^*=\frac{E^\omega+p^\omega}{\rho^\omega}|_\nu$.
Plugging this into \eqref{eq:PropMass} and using $m=\rho v$ yields
\begin{align*}
0=\sum_{\omega\in\mathcal{E}(\nu)}n^{\omega}[\nu]\frac{E^\omega+p^\omega}{\rho^\omega}|_\nu\mnu=\sum_{\omega\in\mathcal{E}(\nu)}n^{\omega}[\nu](v^{\omega}(E^{\omega}+p^{\omega}))|_\nu,
\end{align*}
which proves energy conservation at node $\nu$.
To prove proposition $(2)$ we utilize that $\mathcal{I}_{in}(\nu)\cap\mathcal{I}_{out}(\nu)=\emptyset$ and $\mathcal{I}_{in}(\nu)\cup\mathcal{I}_{out}(\nu)=\Net(\nu)$. We reformulate the entropy mix in $(\upS_{out})$ as
\begin{align}\label{eq:PropIn}
s^*\sum_{\omega\in\mathcal{I}_{in}(\nu)}n^{\omega}[\nu]\mnu=\sum_{\omega\in\mathcal{I}_{in}(\nu)}n^{\omega}[\nu]\mnu\snu, 
\end{align}
for $\omega\in\mathcal{E}(\nu)$. To be able to use $(\upM)$, i.e., $\sum_{\omega\in\mathcal{E}(\nu)}n^{\omega}[\nu]\mnu=0$, we need to take the outgoing pipes in $\mathcal{E}(\nu)$ into account. Here, we have that 
\begin{align}\label{eq:PropOut}
s^*\sum_{\omega\in\mathcal{I}_{out}(\nu)}n^{\omega}[\nu]\mnu=\sum_{\omega\in\mathcal{I}_{out}(\nu)}s^*n^{\omega}[\nu]\mnu=\sum_{\omega\in\mathcal{I}_{out}(\nu)}n^{\omega}[\nu]\mnu\snu, 
\end{align}
since on each outgoing pipe $\snu=s^*$, $\omega\in\mathcal{I}_{out}(\nu)$ and $\omega\in\mathcal{E}(\nu)$. Adding \eqref{eq:PropIn} and \eqref{eq:PropOut} yields,
\begin{align*}
s^*\sum_{\omega\in\mathcal{E}(\nu)}n^{\omega}[\nu]\mnu=\sum_{\omega\in\mathcal{E}(\nu)}n^{\omega}[\nu]\mnu\snu,
\end{align*}
and finally using the mass conservation proves $(\upS')$.
\end{proof}
\noindent Having found coupling conditions, which couple the single pipe systems in an energy-preserving manner and close the system, they need to be incorporated into the pH structure. From the Weak Formulation \ref{eq:VarPrinc} and the boundary port variables \eqref{eq:BoundaryPort}, we see that $(\upM)$ is realizable via the boundary flows,  
\begin{align*}
\sum_{\omega\in\mathcal{E}(\nu)}\Bflonu=0,\quad \forall \nu\in\mathcal{N}_0,
\end{align*}
as $\Bflonu=n^{\omega}[\nu]\mnu$.
The enthalpy equality $(\upH)$ is realized via the boundary efforts,
\begin{align*}
\hnu=\Beffnu=h^*, \quad \omega\in\mathcal{E}(\nu),\quad \nu\in\mathcal{N}_0.
\end{align*}
This shows that $(\upM)$ and $(\upH)$ can be integrated into the pH formulation rather naturally. As seen in Theorem \ref{Theo:Coupling} these two conditions establish energy conservation at the coupling nodes, which makes the resulting coupled system pH. Up until now we have not considered $(\upS_{out})$, which is obviously not needed for energy conservation in the coupled system and thus, incorporating it into the system does not seem quite so obvious. Taking a closer look at the Weak Formulation \ref{eq:VarPrinc} reveals, that we still have $\enu$, i.e., the value of the internal energy at the inflow node of the outgoing pipe $\omega$, as a degree of freedom. Because of Assumption \ref{as:IdealGas}, we know that $s^{\omega}=c_v\operatorname{ln}(\frac{p^{\omega}}{(\rho^{\omega})^\gamma})$ and $p^\omega=\frac{R}{c_v}e^\omega$. We assume that $c_v$ and $c_p$ do not change over space, i.e., we only have one kind of gas in our network. We can then derive a condition for $\enu$,
\begin{align*}
\snu=s^*\quad &\Rightarrow c_v\operatorname{ln}(\frac{p^{\omega}}{(\rho^{\omega})^\gamma})|_\nu=s^*\quad \Rightarrow \enu=\frac{c_v}{R}(\rho^{\omega}|_\nu)^\gamma \mathrm{exp}(\frac{s^*}{c_v}).
\end{align*}
We can also compute $s^*$ from known quantities, as it depends only on the incoming pipes. This leads to an extensive non-linear formula for $\enu$, which can be inserted in our system, i.e., into
\begin{align*}
0&=-(\vect{t}^\omega)^T_\upe\vect{1}^{\omega}+\frac{\enu}{\Bflonu}\Bflonu,\quad \omega=(\nu,\bar{\nu}), \quad \nu\in\mathcal{N}_0, \, \bar{\nu}\in\mathcal{N}.
\end{align*}
Thus, we do not need to add a further equation to the system and preserve the pH structure.
\subsubsection{Port-Hamiltonian Network Model}\label{Sec:pHNet}
Having chosen our coupling conditions and having introduced the input-output-coupling in {Section \ref{sec:Coupling} we can write down the weak formulation for the pipe network analogously to Weak Formulation \ref{eq:VarPrinc} using the function spaces introduced in Section \ref{sec:NetTop}. 
\begin{weak}\label{Sys:wpHDCNet}
Let $\Bflonu\neq 0$, $\enu\neq 0$ for all $\omega\in\mathcal{E}$.
Find $\rho\in\spa{C}^1([0,t_f],\spa{L}^2(\mathcal{E}))$, $m,\,e\in\spa{C}^1([0,t_f],\spa{H}^1_{pw}(\mathcal{E}))$, $\blam_\upm^\omega\in\spa{C}^0([0,t_f],\mathbb{R}^2)$, $\lambda_\upe^\omega\in\spa{C}^0([0,t_f],\mathbb{R})$, $\vect{\Bflo}^\omega\in\spa{C}^1([0,t_f],\mathbb{R}^2$) and $ \lambda_\upH^\nu\in\spa{C}^0([0,t_f],\mathbb{R})$ for all $\omega\in\mathcal{E}$, $\nu\in\mathcal{N}_0$ fulfilling for each $\omega=(\nu,\bar{\nu})\in\mathcal{E}$
\begin{align*}
(\partial_t\rho,\psi)_\mathcal{E}&=-(\partial_xm,\psi)_\mathcal{E},\nonumber\\
(\partial_t\frac{m}{\rho},\varphi)_\mathcal{E}&=(\frac{m^2}{2\rho^2},\partial_x\varphi)_\mathcal{E}-(\frac{e}{\rho}\partial_x1,\varphi)_\mathcal{E} -(\frac{1}{\rho}\partial_x(p1),\varphi)_\mathcal{E}-(\frac{\lambda}{2d}\frac{|m|m}{\rho^2}1,\varphi)_\mathcal{E}\nonumber\\&\quad+\sum_{\omega\in\mathcal{E}}\blam^\omega_\upm\cdot T^\omega_\upm(\varphi^\omega),\nonumber\\
(\partial_te,\phi)_\mathcal{E}&=(\frac{e}{\rho}m,\partial_x\phi)_\mathcal{E}+(\frac{m}{\rho},\partial_x(\phi p))_\mathcal{E}+
(\frac{\lambda}{2d}\frac{|m|m^2}{\rho^2},\phi)_\mathcal{E}-(\frac{k_\omega}{d}T1,\phi)_\mathcal{E}+(\frac{k_\omega}{d}T_\infty,\phi)_\mathcal{E}\nonumber\\&\quad+\sum_{\omega\in\mathcal{E}}\lambda_e^\omega\cdot T_\upe^\omega(\phi^\omega)+\sum_{\omega\in\mathcal{E}}\frac{e^\omega+p^\omega}{\rho^\omega}\phi^\omega\Bflonub,\\
0&=-T_\upm^\omega(m^\omega)+\vect{\Bflo}^\omega, &\text{for all } \omega\in\mathcal{E},\\
0&=-T_\upe^\omega(1)+\frac{\enu}{\Bflonu}\Bflonu, &\text{for all } \omega\in\mathcal{E},
\end{align*}
for $\psi\in\spa{L}^2(\mathcal{E})$, $\varphi,\,\phi\in\spa{H}^1_{pw}(\mathcal{E})$ and $t\in[0,t_f]$
and the coupling conditions
\begin{subequations}
\begin{align}
\label{Coup:Mass}
0&=\sum_{\omega\in\mathcal{E}(\nu)}\Bflonu &\text{ for all } \nu\in\mathcal{N}_0,\\
\label{Coup:Enthalpy}
\Beffnu&=\lambda_\upH^\nu &\text{ for all } \nu\in\mathcal{N}_0, \, \omega\in\mathcal{E}(\nu),\\
%\label{Coup:Entropy}
\enu&=\frac{c_v}{R}(\rho^{\omega}|_\nu)^\gamma \mathrm{exp}(\frac{s^*}{c_v}) &\text{for all } \nu\in\mathcal{N}_0,\, \omega\in\mathcal{E}(\nu)\nonumber.
\end{align}
\end{subequations}
\end{weak}
\noindent The system has to be complemented with two boundary conditions per inflow boundary node and one boundary condition per outflow boundary node. 
\begin{theorem}%Energiedissipation
For any solution $z=[\rho\quad m \quad e]^T$ of Weak Formulation \ref{Sys:wpHDCNet} the energy dissipation inequality,
\begin{align*}
\frac{\D}{\D t}\Ham_\mathcal{E}(z)\leq (\Pflo,\Peff)_\mathcal{E}+\sum_{\nu\in\mathcal{N}_\partial,\omega\in\mathcal{E}(\nu)}\Bflonu\cdot\Beffnu,
\end{align*}
holds for $t\in[0,t_f]$ with the network Hamiltonian $\Ham_\mathcal{E}(z)=\sum_{\omega\in\mathcal{E}}\Ham_\omega(z^\omega)=\sum_{\omega\in\mathcal{E}}\int_\omega \frac{(m^\omega)^2}{2\rho^{\omega}}+e^\omega\,\D x$.
\end{theorem}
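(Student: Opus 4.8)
The plan is to reduce the network estimate to the single-pipe bound of Theorem \ref{Theo:MassEn} and then to collapse the coupling contributions via the energy conservation of Theorem \ref{Theo:Coupling}. Since the network Hamiltonian splits as $\Ham_\mathcal{E}(z)=\sum_{\omega\in\mathcal{E}}\Ham_\omega(z^\omega)$ and every bilinear form $(\cdot,\cdot)_\mathcal{E}$ as well as all operators in Weak Formulation \ref{Sys:wpHDCNet} act edgewise, the time derivative $\frac{\D}{\D t}\Ham_\mathcal{E}(z)$ decouples into a sum over edges. First I would test Weak Formulation \ref{Sys:wpHDCNet} with the effort function $\eff$ and run, edge by edge, the same chain of manipulations (1)--(5) used in the proof of Theorem \ref{Theo:MassEn}, identifying the Lagrange-multiplier boundary terms with the boundary port pairing and discarding the nonnegative cooling contribution to pass from equality to inequality. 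This yields
\begin{align*}
\frac{\D}{\D t}\Ham_\mathcal{E}(z)\leq(\Pflo,\Peff)_\mathcal{E}+\sum_{\omega\in\mathcal{E}}(\vect{\Bflo}^\omega)^T\vect{\Beff}^\omega.
\end{align*}

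Next I would rewrite the edge-boundary pairing as a node sum. Each edge contributes its two endpoints, and each endpoint is attached to exactly one node, so grouping by node and splitting $\mathcal{N}=\mathcal{N}_0\cup\mathcal{N}_\partial$ gives
\begin{align*}
\sum_{\omega\in\mathcal{E}}(\vect{\Bflo}^\omega)^T\vect{\Beff}^\omega&=\sum_{\nu\in\mathcal{N}}\sum_{\omega\in\mathcal{E}(\nu)}\Bflonu\cdot\Beffnu\\
&=\sum_{\nu\in\mathcal{N}_0}\sum_{\omega\in\mathcal{E}(\nu)}\Bflonu\cdot\Beffnu+\sum_{\nu\in\mathcal{N}_\partial}\sum_{\omega\in\mathcal{E}(\nu)}\Bflonu\cdot\Beffnu.
\end{align*}
Here I use $\Bflonu=n^\omega[\nu]\mnu$ and $\Beffnu=\hnu$, read off from the boundary port \eqref{eq:BoundaryPortM}, where the orientation-dependent signs in the two rows realize exactly the incidence map $n^\omega[\nu]$ of Definition \ref{Def:IncMap}.

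The crux is to show that the coupling-node sum vanishes. Fixing $\nu\in\mathcal{N}_0$ and using $m h=\rho v\cdot\frac{E+p}{\rho}=v(E+p)$, each summand equals $\Bflonu\cdot\Beffnu=n^\omega[\nu]\mnu\hnu=n^\omega[\nu](v^\omega(E^\omega+p^\omega))|_\nu$, so the inner sum is precisely the left-hand side of statement $(1)$ in Theorem \ref{Theo:Coupling} and therefore vanishes. Equivalently, the enthalpy coupling \eqref{Coup:Enthalpy} makes $\Beffnu=\lambda_\upH^\nu$ independent of $\omega$, so it factors out of the mass balance \eqref{Coup:Mass} and $\lambda_\upH^\nu\sum_{\omega\in\mathcal{E}(\nu)}\Bflonu=0$. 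Either route removes the $\mathcal{N}_0$-sum, leaving exactly the boundary-node sum and hence the claimed inequality.

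I expect the only genuinely delicate point to be the bookkeeping in the second step: one must check that summing the single-pipe boundary pairings over all edges reproduces, with the correct signs, the node-indexed sum $\sum_{\nu}\sum_{\omega\in\mathcal{E}(\nu)}\Bflonu\cdot\Beffnu$, i.e.\ that the orientation encoded in the boundary port \eqref{eq:BoundaryPortM} is consistent with the network incidence convention of Definition \ref{Def:IncMap}. Once this identification is secured, the vanishing at interior nodes is immediate from Theorem \ref{Theo:Coupling}, and no estimates beyond those already established for a single pipe are needed.
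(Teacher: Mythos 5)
Your proposal is correct and follows essentially the same route as the paper: edgewise application of the single-pipe estimate from Theorem \ref{Theo:MassEn}, regrouping the boundary pairings by node, and eliminating the interior-node contributions by factoring the common enthalpy multiplier $\lambda_\upH^\nu$ out of the mass-conservation condition \eqref{Coup:Mass}. The alternative justification you mention via Theorem \ref{Theo:Coupling}(1) is a harmless restatement of the same cancellation.
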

\begin{proof}
Since the total energy of the network is the sum of the total energies of each pipe the energy dissipation can be proved for each pipe separatley as in Theorem \ref{Theo:MassEn}. The definition of the inner-product for the network given in Section \ref{sec:NetTop} then yields the last equality. Thus, we have that,
\begin{align}\label{eq:b}
\frac{\D}{\D t}\Ham_\mathcal{E}(z)&=\frac{\D}{\D t}\sum_{\omega\in\mathcal{E}}\Ham_\omega(z^\omega)=\sum_{\omega\in\mathcal{E}}\frac{\D}{\D t}\Ham_\omega(z^\omega)
=\sum_{\omega\in\mathcal{E}}(\frac{\delta}{\delta z^\omega}\Ham_\omega(z^\omega),\partial_tz^\omega)\nonumber\\&\leq \sum_{\omega\in\mathcal{E}}(\Pflo^\omega,\Peff^\omega)+\vect{\Bflo}^\omega\cdot\vect{\Beff}^\omega=(\Pflo,\Peff)_\mathcal{E}+\sum_{\omega\in\mathcal{E}}\vect{\Bflo}^\omega\cdot\vect{\Beff}^\omega.
\end{align}
Finally, we make use of the coupling conditions \eqref{Coup:Mass} and \eqref{Coup:Enthalpy} and rewrite the second summand in the last equality of \eqref{eq:b}, i.e.,
\begin{align*}
\sum_{\omega\in\mathcal{E}}\vect{\Bflo}^\omega\cdot\vect{\Beff}^\omega&=\sum_{\nu\in\mathcal{N},\omega\in\mathcal{E}(\nu)}\Bflonu\Beffnu=\sum_{\nu\in\mathcal{N}_\partial,\omega\in\mathcal{E}(\nu)}\Bflonu\Beffnu+\sum_{\bar{\nu}\in\mathcal{N}_0,\omega\in\mathcal{E}(\bar{\nu})}\Bflonub\Beffnub\\
&=\sum_{\nu\in\mathcal{N}_\partial,\omega\in\mathcal{E}(\nu)}\Bflonu\Beffnu+\sum_{\bar{\nu}\in\mathcal{N}_0,\omega\in\mathcal{E}(\bar{\nu})}\Bflonub\lambda_\upH^{\bar{\nu}}\\
&=\sum_{\nu\in\mathcal{N}_\partial,\omega\in\mathcal{E}(\nu)}\Bflonu\Beffnu+\sum_{\bar{\nu}\in\mathcal{N}_0}\lambda_\upH^{\bar{\nu}}\sum_{\omega\in\mathcal{E}(\bar{\nu})}\Bflonub\\
&=\sum_{\nu\in\mathcal{N}_\partial,\omega\in\mathcal{E}(\nu)}\Bflonu\Beffnu,
\end{align*}
which proves the theorem.
\end{proof}
\noindent Furthermore, we also have global mass conservation.
\begin{theorem}%Globale Massenerhaltung
For any solution $z=[\rho\quad m \quad e]^T$ of Weak Formulation \ref{Sys:wpHDCNet} it holds for $t\in[0,t_f]$, that
\begin{align*}
\frac{\D}{\D t}\int_\Omega\rho\,\D x=\sum_{\nu\in\mathcal{N}_\partial,\omega\in\mathcal{E}(\nu)}\Bflonu.
\end{align*}
\end{theorem}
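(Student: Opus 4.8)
The plan is to replicate the single-pipe mass-conservation computation of Theorem \ref{Theo:MassEn} edge by edge, and then eliminate the interior-node boundary contributions using the mass-coupling condition \eqref{Coup:Mass}, in direct analogy with the preceding energy-dissipation theorem. First I would test the density equation of Weak Formulation \ref{Sys:wpHDCNet} with $\psi=1\in\spa{L}^2(\mathcal{E})$, obtaining $\frac{\D}{\D t}\int_\Omega\rho\,\D x=(\partial_t\rho,1)_\mathcal{E}=-(\partial_xm,1)_\mathcal{E}$. Because the network inner-product is the edgewise sum and $\int_\Omega\rho\,\D x=\sum_{\omega\in\mathcal{E}}\int_\omega\rho\,\D x$, the identity reduces to a sum of single-pipe contributions.

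On each edge $\omega=(\nu,\bar{\nu})$ partial integration gives $-(\partial_xm,1)=(m,\partial_x1)-[m^\omega]|_0^L=-[m^\omega]|_0^L$, exactly as in Theorem \ref{Theo:MassEn}. Identifying the start node $\nu$ with $x=0$ and the end node $\bar{\nu}$ with $x=L$, the incidence mapping yields $n^\omega[\nu]=1$ and $n^\omega[\bar{\nu}]=-1$, so that $-[m^\omega]|_0^L=m^\omega|_0-m^\omega|_L=\Bflonu+\Bflonub$ via the relation $\Bflonu=n^\omega[\nu]\mnu$. Summing over all edges, and noting that each edge contributes precisely to its two endpoint nodes, gives
\[
\frac{\D}{\D t}\int_\Omega\rho\,\D x=\sum_{\omega\in\mathcal{E}}\bigl(\Bflonu+\Bflonub\bigr)=\sum_{\nu\in\mathcal{N},\,\omega\in\mathcal{E}(\nu)}\Bflonu.
\]

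Finally I would split the node sum along $\mathcal{N}=\mathcal{N}_\partial\cup\mathcal{N}_0$ and invoke the mass-coupling condition \eqref{Coup:Mass}, which states $\sum_{\omega\in\mathcal{E}(\nu)}\Bflonu=0$ for every interior node $\nu\in\mathcal{N}_0$; the interior contributions therefore cancel and only the boundary-node terms remain, yielding the claimed identity. The computation is almost entirely bookkeeping; the sole delicate point is keeping the sign convention consistent, so that $-[m^\omega]|_0^L$ is correctly expressed through the boundary flows and the incidence mapping, guaranteeing that the cancellation at interior nodes via \eqref{Coup:Mass} is exact.
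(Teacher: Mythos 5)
Your proposal is correct and follows essentially the same route as the paper: apply the single-pipe mass-conservation argument (testing with $\psi=1$ and integrating by parts) edgewise, rewrite $m^\omega|_0-m^\omega|_L$ in terms of the boundary flows $\Bflonu+\Bflonub$, regroup the edge sum as a node sum, and cancel the interior-node contributions with the coupling condition \eqref{Coup:Mass}. The sign bookkeeping via the incidence mapping is exactly the delicate point the paper also relies on, and you handle it consistently.
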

\begin{proof}
The conservation of total mass on the network is proved by using conservation of the total mass on each pipe, as shown in the proof of Theorem \ref{Theo:MassEn}, as
\begin{align*}
\frac{\D}{\D t}\int_\Omega\rho\,\D x=\sum_{\omega\in\mathcal{E}}\int_\omega \rho^\omega\,\D x=\sum_{\omega\in\Net}m^\omega|_0-m^\omega|_L=\sum_{\omega\in\mathcal{E}}\vect{\Bflo}^\omega\cdot[1\quad 1]^T.
\end{align*}
Rearranging this sum and using the coupling condition \eqref{Coup:Mass} yields,
\begin{align*}
\sum_{\omega\in\mathcal{E}}\vect{\Bflo}^\omega\cdot[1\quad 1]^T=\sum_{\nu\in\mathcal{N}_\partial,\omega\in\mathcal{E}(\nu)}\Bflonu+\sum_{\bar{\nu}\in\mathcal{N}_0,\omega\in\mathcal{E}(\bar{\nu})}\Bflonub=\sum_{\nu\in\mathcal{N}_\partial,\omega\in\mathcal{E}(\nu)}\Bflonu.
\end{align*}
\end{proof}
\begin{remark}\label{rk:Cross}
Pipelines in gas transport networks are usually modeled with cross-sec\-tionally averaged dynamics. Thus, we include the cross-sectional pipe area $A^\omega$ into our implementation. This affects all integral-related quantities, like the $\spa{L}^2(\mathcal{E})$ inner-product on the network, i.e.,
\begin{align*}
(\psi,\bar{\psi})_{\mathcal{E}}=\sum_{\omega\in\mathcal{E}}A^\omega(\psi|_\omega,\bar{\psi}|_\omega),
\end{align*}
and $\Ham_\mathcal{E}(z)=\sum_{\omega\in\mathcal{E}}A^\omega\Ham_\omega(z^\omega)$. Furthermore, the incidence mapping in Definition \ref{Def:IncMap} is changed to
\begin{align*}
n^\omega[\nu]=\begin{cases}A^\omega &\text{ for } \omega=(\nu,\bar{\nu}) \text{ for some } \bar{\nu}\in\mathcal{N},\\
						-A^\omega &\text{ for } \omega=(\bar{\nu},\nu) \text{ for some } \bar{\nu}\in\mathcal{N},\\
						0 &\text{ else.} \end{cases}
\end{align*}
Thus, also the boundary conditions and operators and coupling conditions are altered.
\end{remark}

\section{Structure-Preserving Approximations}\label{Sec:SPApprox}
\noindent The following approximations are carried out for the single pipe system. In Section \ref{Sec:Mod} we mention the modifications which are needed, when these methods are applied to the network system.
\subsection{Spatial Discretization}\label{Sec:SpaceDis}
\noindent In the context of pH systems structure-preservation is very important in order to keep energy dissipation and mass conservation during approximation. Thus, Galerkin approximation schemes with compatible finite dimensional approximation spaces need to be utilized. These schemes help to realize not only classical finite element discretizations, but also model order reduction, where compatible global bases are needed. In the case of the compressible non-isothermal Euler equations we can extend the ansatz introduced in \cite{LilSM20}. Thus, we introduce the following assumption.
\begin{assum}\label{As:Comp}
Let $\spa{V}=\spa{V}_\uprho\times\spa{V}_\upm\times\spa{V}_\upe$ and $\spa{V}_\uprho\subset \spa{L}^2(\omega)$, $\spa{V}_\upm,\spa{V}_\upe\subset \spa{H}^1(\omega)$ be finite dimensional subspaces which fulfill the following assumptions:
\begin{itemize}
\item[A1)] $\spa{V}_\uprho=\partial_x\spa{V}_\upm$ with $\partial_x\spa{V}_\upm=\{\xi: \text{ It exists } \zeta\text{ with } \partial_x\zeta=\xi\}$ 
\item[A2)] $\{b\in \spa{H}^1(\omega):\,\partial_xb=0\}\subset\spa{V}_\upm$
\item[A3)] $1\in\spa{V}_\upe$
\end{itemize}
\end{assum}
\noindent With this mass conservation and energy dissipation can be proved analogously to Theorem \ref{Theo:MassEn}. Let $T_\uph(\omega)=\{\omega_j\}$ be a uniform partition of the pipe $\omega=(0,L)$, such that we have
\begin{align*}%\label{eq:Grid}
\omega=\bigcup_{j=1}^n\omega_j=\bigcup_{j=1}^{n}[x_{j},x_{j+1}],
\end{align*}
where the $x_i$, $i=1,\dots, n+1$, are the grid points of the partition.
Finite dimensional subspaces which fulfill Assumption \ref{As:Comp} are, e.g., the $\spa{P}_0$ finite elemet with a discontinuous piecewise constant basis, i.e., $\spa{V}_\rho=\operatorname{span}\{\psi_1,\dots,\psi_n\}$, with
\begin{align*}
\psi_j(x)=\begin{cases}1 \quad \text{if } x\in(x_j,x_{j+1})\\
                       0  \quad \text{else}
\end{cases}, j=1,\dots,n,
\end{align*}
and the $\spa{P}_1$ finite element with a nodal basis for $\spa{V}_\upm=\operatorname{span}\{\varphi_1,\dots,\varphi_{n+1}\}$, i.e.,
\begin{align*}
\varphi_i(x_\iota)&=\delta_{i\iota},\, i,\iota=1,\dots,n+1, \quad \text{and} \\
\varphi_1(x)&=\begin{cases} \frac{x_2-x}{x_2-x_1} \quad &\text{if } x\in \omega_1\\0 &\text{else} \end{cases}, \quad \varphi_{n+1}(x)=\begin{cases}\frac{x-x_{n}}{x_{n+1}-x_n} \quad &\text{if } x\in \omega_n\\
0\quad &\text{else} \end{cases},	\\			
\varphi_{i+1}(x)&=\begin{cases} \frac{x-x_i}{x_{i+1}-x_i} \quad &\text{if } x\in \omega_i\\
\frac{x_{i+2}-x}{x_{i+2}-x_{i+1}} \quad &\text{if } x\in \omega_{i+1}\\
0 \quad &\text{else}
\end{cases}, i=1,\dots,n-1.
\end{align*}  
In the implementation we use the $\spa{P}_1$ finite element basis also for $\spa{V}_\upe$. For a coordinate representation see Appendix \ref{App:FOMCoRep}.

\subsection{Snapshot-based Model Order Reduction}\label{Sec:MOR}
When finite element discretizations lead to very large systems, model order reduction can be employed to make simulation feasible. For non-linear systems proper orthogonal decomposition (POD) is a commonly used model reduction method and elaborate research on it has been done over the last years, see, e.g., \cite{ChaS12,GraHV21,KunV02} for basics, expansions and error bounds. Thus, the reduction procedure applied to Coordinate Representation \ref{Sys:CoRep} is purely projection based and relies on the snapshot matrix $\mat{S}$, i.e.,
\begin{align}\label{eq:Snapshot}
\mat{S}=\begin{bmatrix}
\mat{S}_\uprho\\\mat{S}_\upm\\\mat{S}_\upe
\end{bmatrix}, \quad \text{with} \quad \mat{S}_\uprho&=[\brho^0\quad\dots\quad\brho^{n_t}],\quad \mat{S}_\upm=[\bm^0\quad\dots\quad\bm^{n_t}],\quad \mat{S}_\upe&=[\be^0\quad\dots\quad\be^{n_t}].
\end{align} 
As the spaces of the Lagrange multipliers will not be reduced in order to preserve the differential-algebraic structure, the snapshots of $\blam_\upm$ and $\lambda_\upe$ are not needed for the computation of the projection matrices and we do not consider them in \eqref{eq:Snapshot}. The superscript in \eqref{eq:Snapshot} denotes the time step $t_l$, $l=0,\dots,n_t$, at which the states are evaluated. Applying POD to these snapshot matrices then yields projection matrices which could be used for model order reduction. But in the context of pH systems, the structure needs to be preserved and this is generally not the case when simply applying POD. Therefore, problem-dependent compatibility conditions need to be developed, which are then enforced onto the projection matrices. For the linear damped wave equations and the isothermal Euler equations this has been done in \cite{EggKLSMM18} and \cite{LilSM21}, respectively. The papers \cite{ AfkH16,AfkH19,PenM16} investigate model order reduction for canonical symplectic structures, whose Hamiltonian structure can be preserved under certain compatibility conditions.  In the following we adapt the approach of \cite{LilSM21}.
\subsubsection{Computation of Projection Matrices}\label{Sec:ProjMat}
\noindent As we reduce the space dimension of a pH system, we need to preserve the structure in order to keep properties like mass and energy conservation. For this, the reduced order spaces $\spa{V}_{\uprho,\upr}\subset\spa{V}_\uprho$, $\spa{V}_{\upm,\upr}\subset\spa{V}_\upm$ and $\spa{V}_{\upe,\upr}\subset\spa{V}_\upe$ need to fulfill the compatibility conditions stated in Assumption \ref{As:Comp}. The algebraic equivalent of Assumption \ref{As:Comp} is given as follows,
\begin{assum}\label{As:CompAlg}
Let the reduction basis $\mat{V}_\upr$ have the following structure
\begin{align*}
\mat{V}_\upr=\begin{bmatrix}
\mat{V}_\uprho & \sz &\sz &\sz\\
\sz & \mat{V}_\upm & \sz&\sz\\
\sz&\sz&\mat{V}_\upe&\sz\\
\sz&\sz&\sz&\so
\end{bmatrix}.
\end{align*}
Then $\mat{V}_\upr$ is assumed to fulfill
\begin{itemize}
\item[$A1_\uph$)] $\operatorname{image}(\mat{M}_\uprho \mat{V}_\uprho)=\operatorname{image}(\mat{J}_{\uprho,\upm}\mat{V}_\upm)$
\item[$A2_\uph$)] $\operatorname{kernel}(\mat{J}_{\uprho,\upm})\subset\operatorname{image}(\mat{V}_\upm)$
\item[$A3_\uph$)] $[1,\dots,1]^T\in\operatorname{image}(\mat{V}_\upe)$
\end{itemize}
\end{assum}
\noindent Here, $\mat{M}_\uprho=[(\psi_q,\psi_j)]_{j,q=1\dots n_1}$ and $\mat{J}_{\uprho,\upm}=[-(\partial_x\varphi_\iota,\psi_j)]_{j=1\dots n_1,\iota=1\dots n_2}$. To keep the pH structure, it is important to create a block-diagonal projection matrix $\mat{V}_\upr$, such that the blocks in the system of equations in the Coordinate Representation \ref{Sys:CoRep} are not mixed. The identity matrix in the last row and column of $\mat{V}_\upr$ makes sure that the algebraic equations are not reduced. To obtain the bases $\mat{V}_\uprho$, $\mat{V}_\upm$ and $\mat{V}_\upe$ of  $\spa{V}_{\uprho,\upr}$, $\spa{V}_{\upm,\upr}$ and $\spa{V}_{\upe,\upr}$, respectively, we start by computing the basis $\mat{V}_\uprho$ from the snapshots \eqref{eq:Snapshot} by Algorithm \ref{Algo:PODW1W3}, which uses POD \cite{KunV01}, see Algorithm \ref{Algo:POD}.
\begin{algo}[Computation of $\mat{V}_\uprho$]\label{Algo:PODW1W3}
Let $\mat{S}$, $\mat{M}_\uprho$, $\mat{J}_{\uprho,\upm}$ and $r_\uprho$ with $0<r_\uprho<n$ be given.
\begin{enumerate}
\item[(1)] Set up the snapshot matrices $\mat{S}_\uprho$, $\mat{S}_\upm$ and $\mat{S}_\upe$ from $\mat{S}$.
\item[(2)] Construct a reduced basis $\mat{V}_\uprho$ from $[\mat{S}_\uprho\quad\mat{M}_\uprho^{-1}(\mat{J}_{\uprho,\upm}[\mat{S}_\upm,\,\mat{S}_\upe])]$ by POD, i.e., Algorithm \ref{Algo:POD}, w.r.t. the scalar product induced by $\mat{M}_\uprho$ of dimension $r_\uprho$.
\end{enumerate}
\end{algo}
\noindent In the computation of $\mat{V}_\uprho$ we not only make use of the snapshots $\mat{S}_\uprho$, but also of $\mat{M}_\uprho^{-1}\mat{J}_{\uprho,\upm}\mat{S}_\upm$, as can be seen in step (2) of Algorithm \ref{Algo:PODW1W3}. This is motivated by Assumption \ref{As:CompAlg}-$A1_\uph)$. As $\operatorname{image}(\mat{M}_\uprho \mat{V}_\uprho)=\operatorname{image}(\mat{J}_{\uprho,\upm}\mat{V}_\upm)$ and $\mat{M}_\uprho$ invertible, it follows that $\operatorname{image}(\mat{V}_\uprho)=\operatorname{image}(\mat{M}_\uprho^{-1}\mat{J}_{\uprho,\upm}\mat{V}_\upm)$. Furthermore, it resembles including the time derivatives of $\rho$ into the snapshot matrix, i.e., $\partial_t\boldsymbol{\rho}=\mat{M}_\uprho^{-1}\mat{J}_{\uprho,\upm}\bm$. This has been proven useful in literature \cite{IliW13} and yields more information on the solution behavior. Additionally, numerical experiments showed that adding $\mat{M}_\uprho^{-1}\mat{J}_{\uprho,\upm}\mat{S}_\upe$ to the computation of $\mat{V}_\uprho$ enhances the approximation quality of the reduced bases and yields more robust reduced order models. If the spaces $\spa{V}_\upm$ and $\spa{V}_\upe$ are constructed by the same finite element bases, it follows that the functions $e(t_l,x)=\sum_{i=0}^{n}[\mat{S}_\upe]_{i,l}\varphi_i(x)\in\spa{V}_\upm$, such that Assumptions \ref{As:Comp}-$A1$ and \ref{As:CompAlg}-$A1_\uph)$ are not violated by including these functions or snapshots. 
\begin{algo}[Proper orthogonal decomposition]\label{Algo:POD}
Let $\mat{S}$, a symmetric positive definite $\mat{M}$ and $r>0$ be given.
\begin{enumerate}
\item[(1)] Set up $\mat{Y}=\sqrt{\mat{M}}\mat{S}$.
\item[(2)] Compute a singular value decomposition of $\mat{Y}$: Find matrices  $\mat{U}$, $\boldsymbol{\Sigma}$, $\mat{W}$, such that $\mat{Y}=\mat{U}\boldsymbol{\Sigma}\mat{W}^T$.
\item[(3)] The POD basis $\mat{V}$ associated to $\mat{S}$ is then given by the first $r$ columns of $\sqrt{\mat{M}}^{-1}\mat{U}$.
\end{enumerate}
\end{algo}
\noindent Finally, the compatibility conditions are enforced, see Algorithm \ref{Algo:CompBasis}, which has been adapted from \cite{LilSM21}. Here, $\mat{V}_\upm$ and $\mat{V}_\upe$ are deduced from $\mat{V}_\uprho$, which contains information of $\brho$, $\bm$ and $\be$.
\begin{algo}[Compatible basis]\label{Algo:CompBasis}
Let  $\mat{V}_\uprho$, $\mat{M}_\uprho$, $\mat{M}_\upm$, $\mat{J}_{\uprho,\upm}$ be given.
\begin{enumerate}
\item[(1)] Compute the kernel $\mat{N}$ of $\mat{J}_{\uprho,\upm}$.
\item[(2)]
\begin{enumerate}
\item Compute $\mat{W}_\upm=\mat{M}_\upm^{-1}\mat{J}_{\uprho,\upm}^T(\mat{J}_{\uprho,\upm}\mat{M}_\upm^{-1}\mat{J}_{\uprho,\upm}^T)^{-1}\mat{M}_\uprho \mat{V}_\uprho$.
\item Orthogonalize the columns of $[\mat{W}_\upm\quad\mat{N}]$ w.r.t. the inner product induced by $\mat{M}_\upm$. This yields $\mat{V}_\upm$.
\end{enumerate}
\item[(3)] Set $\mat{V}_\upe$ to be equal to $\mat{V}_\upm$.
\end{enumerate}
\end{algo}
\noindent Here, $\mat{M}_\upm=[(\varphi_i,\varphi_\iota)]_{i,\iota=1,\dots,n_2}$ denotes the mass matrix related to finite element space $\spa{V}_\upm$. Lines (2a) and (2b) in Algorithm \ref{Algo:CompBasis} show how the compatibility conditions \ref{As:CompAlg}-$A1_\uph)$ and \ref{As:CompAlg}-$A2_\uph)$ are enforced for the reduced space $\spa{V}_{\upm,\upr}$. Choosing the same basis for $\spa{V}_{\upm,\upr}$ and $\spa{V}_{\upe,\upr}$ might seem odd, but experiments showed that this choice enhances the approximation quality of the reduced models. This creates more symmetry in the reduced models, as $\tilde{\mat{J}}_{\upm,\upe}$ and $\tilde{\mat{J}}_{\upe,\upm}$ will still be symmetric after reduction. Furthermore, the compatibility condition \ref{As:CompAlg}-$A3_\uph)$ is fulfilled. The latter follows from the kernel of $\mat{J}_{\uprho,\upm}$ being one dimensional, since it is simply the constant function, i.e.,  $\operatorname{span}\{\mathbf{1}\}=\operatorname{span}\{\operatorname{kernel}(\mat{J}_{\uprho,\upm})\}$.
\subsection{Complexity Reduction}\label{Sec:CompRed}
Recently, empirical quadrature has become popular for structure-pre\-serving complexity reduction, e.g., for energy conservation, see \cite{FarACC14} and \cite{HerCF17}. Here, the underlying inner-product is rewritten into a weighted sum with only a few non-zero weights. The weights are learned in an optimization problem. In \cite{LilSM21} this ansatz was first used for complexity reduction of non-linear pH systems. We expand this approach from the barotropic to the non-isothermal Euler equations. Essentially, we are looking for an approximation of the $\spa{L}^2$ inner-product, as given in Definition \ref{Def:CompScal}.
\begin{definition}[\cite{LilSM20}]\label{Def:CompScal}
Let a partitioning of $\omega$ be given, i.e., $\omega=\bigcup_{j=1}^n\omega_j$. Furthermore, let an index set $J\subset~\{1,\dots,n\}$ and weights $w_j$, $j\in J$, be given. Then, we can define the complexity reduced bilinear form $(\cdot,\cdot)_\upc:\spa{L}^2(\omega)\times \spa{L}^2(\omega)\mapsto\mathbb{R}$ and $||\cdot||_\upc$ by
\begin{align*}
(b,\bar{b})_\upc=\sum_{j\in J}w_j\int_{\omega_j}b(x)\bar{b}(x)\D x \quad \text{and} \quad ||b||_\upc=\sqrt{(b,b)_\upc}.
\end{align*}
\end{definition}
\noindent Furthermore, we adapt the following assumption for the new inner-product from \cite{LilSM20}.
\begin{assum}\label{As:ComRed}
The bilinear form $(\cdot,\cdot)_\upc$ is given as in Definition \ref{Def:CompScal} with $w_j>0$ for $j\in J$. Moreover, there exists a constant $\tilde{C}$, such that
\begin{align*}
\frac{1}{\tilde{C}}||b||_\upc\leq||b||\leq\tilde{C}||b||_\upc \quad \text{for} \quad b\in \spa{V}_\uprho\cup\spa{V}_\upm\cup\spa{V}_\upe.
\end{align*}
\end{assum}
\noindent The positivity of the weights is needed to preserve the positive semi-definiteness of the dissipative part of the pH system. The second assumption makes sure that the original and the complexity reduced norm are equivalent. The weak formulation including the approximated inner product is then given as follows.
\begin{weak}\label{Sys:CompRed}
Let $\BfloZ\neq 0$ and $\eZ\neq 0$. Find $z_\uph=[\rho_h\quad m_\uph\quad e_\uph]^T\in \spa{C}^1([0,t_f],\spa{V}_\uprho\times\spa{V}_\upm\times\spa{V}_\upe)$ and $\blam_\upm\in\spa{C}^0([0,t_f],\mathbb{R}^2)$, $\lambda_\upe\in\spa{C}^0([0,t_f],\mathbb{R})$, $\vect{\Bflo}\in\spa{C}^1([0,t_f],\mathbb{R}^2)$ fulfilling
\begin{align}
(\partial_t\rho_\uph,\psi)&=-(\partial_xm_\uph,\psi),\nonumber\\
(-\frac{m_\uph}{\rho^2_\uph}\partial_t\rho_\uph+\frac{1}{\rho_\uph}\partial_tm_\uph,\varphi)_\upc&=(\frac{m_\uph^2}{2\rho_\uph^2},\partial_x\varphi)_\upc-(\frac{e_\uph}{\rho_\uph}\partial_x1_\uph,\varphi)_\upc-(\frac{1}{\rho_\uph}\partial_x(p_\uph 1_\uph),\varphi)_\upc\nonumber\\&\quad-(\frac{\lambda}{2d}\frac{m_\uph|m_\uph|}{\rho^2_\uph}1_\uph,\varphi)_\upc+\blam_\upm\cdot T_\upm(\varphi),\nonumber\\
(\partial_te_\uph,\phi)&=(\frac{e_\uph}{\rho_\uph}m_\uph,\partial_x\phi)_\upc+(\frac{1}{\rho_\uph}m_\uph,\partial_x(\phi_\iota p_\uph))_\upc+(\frac{\lambda}{2d}\frac{m^2_\uph|m_\uph|}{\rho^2_\uph},\phi)_\upc\nonumber\\&\quad-(\frac{k_\omega}{d}T_\uph 1_\uph,\phi)_\upc+(\frac{k_\omega}{d}T_\infty,\phi)+\lambda_e\cdot T_\upe(\phi)+\frac{e+p}{\rho}|_L\phiL\BfloL,\nonumber\\
0&=-T_\upm m_\uph +\vect{\Bflo},\nonumber\\
0&=-T_\upe 1_\uph+\frac{\eZ}{\BfloZ}\BfloZ\nonumber,
\end{align}
for all $\psi\in\spa{V}_\uprho$, $\varphi\in\spa{V}_\upm$ and $\phi\in\spa{V}_\upe$. Assumptions \ref{As:Comp} and \ref{As:ComRed} are supposed to hold.
\end{weak}
\noindent The new inner-product is only used in the terms which include non-linearities. This makes it possible to prove mass conservation analogously to Theorem \ref{Theo:MassEn}. As for the proof of energy dissipation, we need to make some small adjustments, e.g., introducing a complexity reduced Hamiltonian, see \cite{Hau24_Dis} and \cite{LilSM21} for the proof and an approach to learn the weights.
\begin{theorem}\label{Theo:CompRedEnDis}
For any solution $z_\uph$ of the Weak Formulation \ref{Sys:CompRed} the following inequality holds
\begin{align*}
\frac{\D}{\D t}\Ham_\upc(z_\uph)&=-(\frac{k_\omega}{d}T_\uph 1_\uph,1_\uph)_\upc+ \int_\omega\frac{k_\omega}{d}T_\infty1_\uph\D x -\left.[m_\uph(\frac{m_\uph^2}{2\rho_\uph^2}+\frac{1}{\rho_\uph}(e_\uph+p_\uph))]\right|_0^L\leq(\Pflo, \Peff) +\vect{\Bflo}^T\vect{\Beff}
\end{align*}
with complexity reduced Hamiltonian $\Ham_\upc(z_\uph)=(\frac{m_\uph^2}{2\rho_\uph},1)_\upc+(e_\uph,1).$
\end{theorem}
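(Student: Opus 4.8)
The plan is to follow the template of the proof of Theorem~\ref{Theo:MassEn}, namely to differentiate the complexity-reduced Hamiltonian in time and substitute the equations of Weak Formulation~\ref{Sys:CompRed} tested with the effort components $\eff=[\frac{m_\uph^2}{2\rho_\uph^2}\quad m_\uph\quad 1]^T$, while carefully tracking which pairings carry the reduced inner-product $(\cdot,\cdot)_\upc$ and which carry the standard one. Since the weights and the partition are time-independent, I would first compute
\begin{align*}
\frac{\D}{\D t}\Ham_\upc(z_\uph)=\Big(-\tfrac{m_\uph^2}{2\rho_\uph^2}\partial_t\rho_\uph+\tfrac{m_\uph}{\rho_\uph}\partial_tm_\uph,1\Big)_\upc+(\partial_te_\uph,1).
\end{align*}
Testing the momentum equation with $\varphi=m_\uph$ reproduces the bracket $\big(-\tfrac{m_\uph^2}{\rho_\uph^2}\partial_t\rho_\uph+\tfrac{m_\uph}{\rho_\uph}\partial_tm_\uph,1\big)_\upc$ and testing the energy equation with $\phi=1$ reproduces $(\partial_te_\uph,1)$; writing $-\tfrac{m_\uph^2}{2\rho_\uph^2}=-\tfrac{m_\uph^2}{\rho_\uph^2}+\tfrac{m_\uph^2}{2\rho_\uph^2}$ then leaves a single surplus term $\big(\tfrac{m_\uph^2}{2\rho_\uph^2}\partial_t\rho_\uph,1\big)_\upc$ that must be absorbed by the density equation.

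The decisive step, and the one I expect to be the main obstacle, is absorbing this surplus term: the density equation carries the \emph{standard} inner-product whereas the surplus carries $(\cdot,\cdot)_\upc$, so the Riesz trick of \cite{LilSM20} must be adapted. I would invoke the Riesz representation theorem on the finite-dimensional space $\spa{V}_\uprho$ to produce $\varepsilon_\upc\in\spa{V}_\uprho$ with $(\varepsilon_\upc,\psi)=\big(\tfrac{m_\uph^2}{2\rho_\uph^2},\psi\big)_\upc$ for all $\psi\in\spa{V}_\uprho$. Because $\partial_t\rho_\uph\in\spa{V}_\uprho$, the surplus term equals $(\varepsilon_\upc,\partial_t\rho_\uph)$, which the density equation converts into $-(\varepsilon_\upc,\partial_xm_\uph)$; meanwhile the momentum equation tested with $m_\uph$ contributes $\big(\tfrac{m_\uph^2}{2\rho_\uph^2},\partial_xm_\uph\big)_\upc$. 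Their cancellation hinges on $\partial_xm_\uph\in\spa{V}_\uprho$, which is precisely the compatibility condition~\ref{As:Comp}-$A1$, i.e.\ $\spa{V}_\uprho=\partial_x\spa{V}_\upm$; choosing $\psi=\partial_xm_\uph$ in the defining relation of $\varepsilon_\upc$ gives $(\varepsilon_\upc,\partial_xm_\uph)=\big(\tfrac{m_\uph^2}{2\rho_\uph^2},\partial_xm_\uph\big)_\upc$, so the two contributions annihilate. This matching of the reduced and the standard pairing through the compatibility condition is the only place where the complexity reduction could destroy the power balance, which is why it is the heart of the argument.

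The remaining bookkeeping mirrors the unreduced case and only uses that $1_\uph=1$ (condition~\ref{As:Comp}-$A3$), so $\partial_x1_\uph=0$ eliminates the two terms carrying $\partial_x1_\uph$. The friction contributions $-\big(\tfrac{\lambda}{2d}\tfrac{|m_\uph|m_\uph}{\rho_\uph^2},m_\uph\big)_\upc$ and $+\big(\tfrac{\lambda}{2d}\tfrac{|m_\uph|m_\uph^2}{\rho_\uph^2},1\big)_\upc$ cancel, and the two pressure terms cancel after noting $\partial_x(\phi p_\uph)=\partial_xp_\uph$ for $\phi=1$. What survives is
\begin{align*}
\frac{\D}{\D t}\Ham_\upc(z_\uph)&=\blam_\upm\cdot T_\upm m_\uph-\Big(\tfrac{k_\omega}{d}T_\uph1_\uph,1_\uph\Big)_\upc\\
&\quad+\Big(\tfrac{k_\omega}{d}T_\infty,1\Big)+\lambda_\upe\cdot T_\upe1_\uph+\tfrac{e_\uph+p_\uph}{\rho_\uph}|_L\,\BfloL.
\end{align*}
Inserting the definitions of $\blam_\upm$, $\lambda_\upe$ and of the boundary port variables recombines the Lagrange-multiplier and boundary contributions into $-[m_\uph(\tfrac{m_\uph^2}{2\rho_\uph^2}+\tfrac{1}{\rho_\uph}(e_\uph+p_\uph))]|_0^L$, yielding the claimed equality. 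Finally the inequality follows because the dissipation term is non-positive: by Assumption~\ref{As:ComRed} the weights satisfy $w_j>0$, and with $k_\omega,T_\uph\geq0$, $d>0$ one has $\big(\tfrac{k_\omega}{d}T_\uph1_\uph,1_\uph\big)_\upc\geq0$; identifying $\int_\omega\tfrac{k_\omega}{d}T_\infty1_\uph\,\D x=(\Pflo,\Peff)$ and the boundary flux with $\vect{\Bflo}^T\vect{\Beff}$ through Theorem~\ref{Theo:DiracpHDC} gives $\frac{\D}{\D t}\Ham_\upc(z_\uph)\leq(\Pflo,\Peff)+\vect{\Bflo}^T\vect{\Beff}$.
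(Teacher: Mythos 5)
Your proof is correct, and it follows exactly the route the paper intends: the paper itself omits the argument for Theorem \ref{Theo:CompRedEnDis} (deferring to \cite{Hau24_Dis} and \cite{LilSM21} with the remark that only ``small adjustments'' to Theorem \ref{Theo:MassEn} are needed), and your adaptation --- defining $\varepsilon_\upc\in\spa{V}_\uprho$ via Riesz representation so that the $(\cdot,\cdot)_\upc$-surplus from the momentum equation cancels against the standard-inner-product density equation through the compatibility condition $\partial_x\spa{V}_\upm=\spa{V}_\uprho$ --- is precisely the adjustment required. The remaining bookkeeping (friction and pressure cancellations, $\partial_x 1_\uph=0$, recombination of the Lagrange multipliers and boundary port variables, and positivity of the weights for the final inequality) is all handled correctly.
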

\subsection{Modification of Approximation Approach}\label{Sec:Mod}
As the pH formulation of the network system is derived using energy-preserving coupling between the single pipes, the Weak Formulation \ref{Sys:wpHDCNet} on the whole network, has almost the same structure as Weak Formulation \ref{eq:VarPrinc}. The only differences are the additional algebraic equations for mass conservation and enthalpy equality at the coupling nodes. Therefore, for structure-preserving space discretization we can utilize the same compatibility conditions as in Assumption \ref{As:Comp} using the function spaces on the network, as introduced in Section \ref{sec:NetTop}.
\begin{assum}\label{As:CompNet}
Let $\spa{V}^\Net=\spa{V}^\Net_\uprho\times\spa{V}^\Net_\upm\times\spa{V}^\Net_\upe$ and $\spa{V}^\Net_\uprho\subset \spa{L}^2(\Net)$, $\spa{V}^\Net_\upm,\spa{V}^\Net_\upe\subset \spa{H}^1_{pw}(\Net)$ be finite dimensional subspaces which fulfill the following assumptions:
\begin{itemize}
\item[A1)] $\spa{V}^\Net_\uprho=\partial_x\spa{V}^\Net_\upm$ with $\partial_x\spa{V}^\Net_\upm=\{\xi: \text{ It exists } \zeta\text{ with } \partial_x\zeta=\xi\}$ 
\item[A2)] $\{b\in \spa{H}_{pw}^1(\Net):\,\partial_xb=0\}\subset\spa{V}^\Net_\upm$
\item[A3)] $1\in\spa{V}^\Net_\upe$
\end{itemize}
\end{assum}
\noindent The semi-discretized network system can be formulated using the same pipe-wise spaces as for the single pipe system. See Appendix \ref{Sec:CoRepNet} for the coordinate formulation of the network system. As networks of pipes in real-world applications lead to highly non-linear semi-discretized systems of very high dimensions, we need to apply model order and complexity reduction to be able to simulate them. For this, the algorithms from Sections \ref{Sec:MOR} and \ref{Sec:CompRed} are applicable to the network system with minor alterations. The algebraic version of the compatibility conditions in Assumption \ref{As:CompNet} are established as follows.
\begin{assum}\label{As:CompAlgNet}
Let the reduction basis $\mat{V}^\Net_\upr$ have the following structure
\begin{align*}
\mat{V}^\Net_\upr=\begin{bmatrix}
\mat{V}^\Net_\uprho & \sz &\sz &\sz\\
\sz & \mat{V}^\Net_\upm & \sz&\sz\\
\sz&\sz&\mat{V}^\Net_\upe&\sz\\
\sz&\sz&\sz&\so
\end{bmatrix}.
\end{align*}
Then $\mat{V}^\Net_\upr$ is assumed to fulfill
\begin{itemize}
\item[$A1_\uph$)] $\operatorname{image}(\mat{M}^\Net_\uprho \mat{V}^\Net_\uprho)=\operatorname{image}(\mat{J}^\Net_{\uprho,\upm}\mat{V}^\Net_\upm)$
\item[$A2_\uph$)] $\operatorname{kernel}(\mat{J}^\Net_{\uprho,\upm})\subset\operatorname{image}(\mat{V}^\Net_\upm)$
\item[$A3_\uph$)] $[1,\dots,1]^T\in\operatorname{image}(\mat{V}^\Net_\upe)$
\end{itemize}
\end{assum}
\noindent Similar to model order reduction on the single pipe system, we do not reduce the space for the algebraic equations, as this might lead to instabilities and loss of system properties. For the computation of the reduction basis $\mat{V}_\upr^\Net$ there are two different approaches in the network case. 
\begin{itemize}[leftmargin=2cm]
\item[$A_{\Net}$] On the one hand, we can compute a reduction basis from snapshots of the whole network, i.e., $\mat{S}^\Net_\uprho$, $\mat{S}^\Net_\upm$ and $\mat{S}^\Net_\upe$, by the Algorithms \ref{Algo:PODW1W3}, \ref{Algo:POD}, \ref{Algo:CompBasis}, which leads to dense matrices $\mat{V}_\uprho^\Net$, $\mat{V}_\upm^\Net$ and $\mat{V}_\upe^\Net$ and thus, to a reduced order model without pipe-wise structure.
\item[$A_{\omega}$]On the other hand, the reduction basis can also be computed pipe-wise, such that the reduction matrices $\mat{V}_\uprho^\Net$, $\mat{V}_\upm^\Net$ and $\mat{V}_\upe^\Net$ are sparse and block-diagonal, e.g.,
\begin{align*}
\mat{V}_\uprho^\Net=\begin{bmatrix}
\mat{V}_\uprho^{\omega_1} & \sz & \cdots &\sz\\
\sz &\ddots&&\vdots\\
\vdots&&\ddots&\sz\\
\sz&\cdots&\sz&\mat{V}_\uprho^{\omega_k}
\end{bmatrix}.
\end{align*}
To set up these reduction matrices we need to run the Algorithms \ref{Algo:PODW1W3}, \ref{Algo:POD}, \ref{Algo:CompBasis} on the snapshot matrices for each pipe, i.e., $\mat{S}^{\omega_i}_\uprho$, $\mat{S}^{\omega_i}_\upm$ and $\mat{S}^{\omega_i}_\upe$, $i=1,\dots,k$.
This leads to a reduced order model where the single pipes can still be identified.
\end{itemize}
In both cases the compatibility conditions from Assumption \ref{As:CompAlgNet} are fulfilled. Analogously, the complexity reduction can be computed pipe-wise $A_\omega^\upc$ or for the whole network $A_\Net^\upc$.

\section{Numerical Results}\label{Sec:Num}
\noindent All computations are carried out in SI-Units. For a standard pipe set-up we have
\begin{equation}\label{Set-Up}
\begin{aligned}
L=1\,m,\quad d=0.1\,m,\quad \lambda=4,\quad k_\omega=\frac{1}{2}\,\frac{W}{m^2K},\quad T_\infty=1\,K, \quad R=1\,\frac{J}{K},\quad c_v=\frac{5}{2}\,\frac{J}{K},\quad c_p=\frac{7}{2}\,\frac{J}{K},
\end{aligned}
\end{equation}
such that the flow of an ideal two-atomic gas is modeled, see \cite{LeV92}. This set-up is similar to the one used in \cite{Egg16}. Time is measured in seconds, e.g., $t_f=30s$, and we have $\rho\,[\frac{kg}{m^3}]$, $m\,[\frac{kg}{m^2s}]$ and $e\,[\frac{J}{m^3}]$. In the following, we suppress all units for better readability. The dimension of a semi-discretized system is $3n^\Net+2|\Net|+n_\lambda$, where $n^\Net=\sum_{i=1}^{|\Net|}n^{\omega_i}$ with $n^{\omega_i}$ being the number of finite elements on pipe $\omega_i$, $|\Net|$ is the number of pipes in the network and $n_\lambda$ adds the number of the Lagrange multipliers. These systems are then solved using the implicit Euler scheme with a temporal step size of $\tau=0.1$ and Newton's method with the analytical Jacobian, a tolerance of $tol=10^{-10}$ and a maximum number of 20 iterations. Furthermore, consistent initial values for the Lagrange multipliers are computed or educated guesses have been made in the network case.
To measure the quality of the reduced order models (ROMs) with respect to the full order model (FOM) the maximal relative $\spa{L}^2$-error is considered, i.e., 
$$\mathfrak{E}_t=\max_{t\in[0,t_f]}\frac{||z(t,x)-z_\upr(t,x)||_{\spa{L}^2}}{||z(t,x)||_{\spa{L}^2}}.$$ 
This error is also called model reduction (MOR) error throughout the next sections. Here, $z_\upr(t,x)$  denotes the approximation of the full state $z(t,x)$. Additionally, the maximal relative projection error $\mathfrak{E}_{t,P}$ is computed, i.e., the error when $z_r(t,x)$ is set to be the orthogonal projection of the solution trajectory onto the respective reduced space. This projection is computed by $\mat{V}^\dag\vect{z}$ with $\mat{V}^\dag$ being the pseudo-inverse of $\mat{V}$. It thus shows the pure projection error made in MOR and serves as a lower bound for the expected model reduction error. Furthermore, the following error  
\begin{align*}
\mathfrak{E}_{\mathrm{pH}}=\max_{t\in[0,t_f]}||\mat{E}(\vect{z}(t))^T\vect{\eff}(\vect{z}(t))-\nabla_{\vect{z}(t)}\Ham(\vect{z}(t))||_2
\end{align*}
shows, if a system violates condition \eqref{Cond:Volker} and thus, is not a pH-system. Here, $||\cdot ||_2$ denotes the standard Euclidean norm. Additionally, we denote by $r_\uprho^\Net=\sum_{i=1}^{|\Net|}r_\uprho^{\omega_i}$ the reduced counter part to $n^\Net$, such that the reduced models are of dimension $3r_\uprho^\Net+2|\Net|+n_\lambda$.
The ROMs are initialized with
\begin{align*}
\boldsymbol{\rho}_\upr(0)=\mat{V}_\upr^\dag\boldsymbol{\rho}(0),\quad \vect{m}_\upr(0)=\mat{V}_\upm^\dag\vect{m}(0), \quad \vect{e}_\upr(0)=\mat{V}_\upe^\dag\vect{e}(0).\nonumber 
\end{align*}
All experiments have been carried out on MATLAB Version R2022a on a Linux 64-Bit machine with an Intel\textsuperscript{\textregistered} Core\textsuperscript{\texttrademark} i7-6700 processor using the Tensor Toolbox for MATLAB \cite{Toolbox}.
 \begin{figure}[!t]
\includegraphics[scale=1]{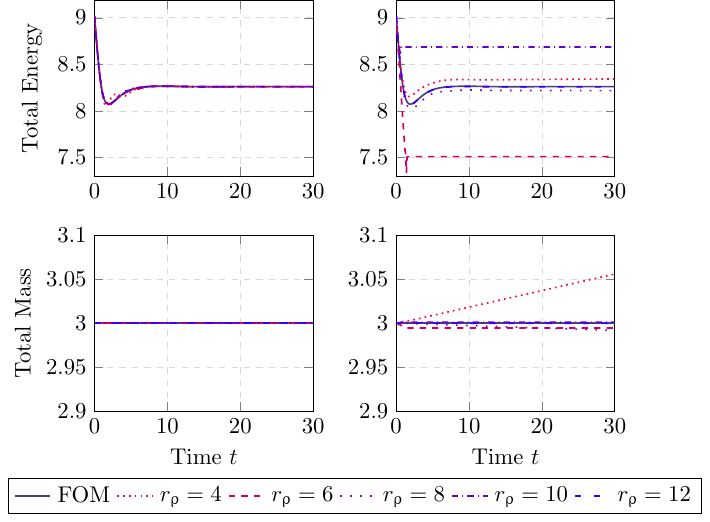}
\caption{Total energy and mass with (left) and without (right) compatibility conditions.} \label{fig:EMS_CoC}
\end{figure}

\subsection{Model Order Reduction for a Single Pipe}
Since the long term goal is to simulate real-world gas networks, which leads to semi-discretized systems of high dimensions, there is an immediate need for MOR. Therefore, we check the effectiveness and the error behavior of the MOR procedure from Section \ref{Sec:MOR}. For this, we compare our MOR procedure to standard MOR based on POD without compatibility conditions, see Section \ref{Sec:Num_CoC}. The parity of the FOM plays an important role in model reduction and is therefore studied in Section \ref{Sec:Num_Parity}. 

\subsubsection{Influence of Compatibility Conditions on Model Order Reduction}\label{Sec:Num_CoC}
We consider the flow of an ideal gas modeled by System \ref{Sys:pHDCm} with set-up \eqref{Set-Up}. The initial values and the boundary conditions are $\rho(0,x)=3, \, m(0,x)=0.3,\, e(0,x)=9$ and $m(t,0)=m(t,1)=0.3, \, e(t,0)=9$, respectively. The FOM is discretized with a spatial step size $\Delta x=0.01$, resulting in a dimension of $3n+2+3=305$ with $n=100$. The FOM is simulated until the stationary solution is reached at $t_{f}=30$. The ROMs are solved using the same boundary conditions and parameters as the FOM. For the MOR with compatibility conditions the projection matrices are computed with the snapshot matrix \eqref{eq:Snapshot}, set up from the solutions of the FOM, and the Algorithms \ref{Algo:PODW1W3} and \ref{Algo:CompBasis}. For the MOR without compatibility conditions the projection matrices are also computed from the snapshots \eqref{eq:Snapshot}, but only by Algorithm \ref{Algo:POD}. Here, $\mat{M}$ is set to be the mass matrix with respect to the state variable for which the projection matrix is computed, i.e., $\mat{M}_\uprho$, $\mat{M}_\upm$ and $\mat{M}_\upe$ for $\boldsymbol{\rho}$, $\bm$ and $\be$, respectively. Furthermore, $r=r_\uprho$ for $\boldsymbol{\rho}$ and $r=r_\uprho+1$ for $\bm$ and $\be$ to achieve comparability between the ROMs computed by MOR with and MOR without compatibility conditions. Figure \ref{fig:EMS_CoC} shows the energy dissipation and the mass conservation for the FOM and ROMs with various values of $r_\uprho$. The plots on the left show that the energy dissipation and mass conservation of the FOM is very well approximated when the ROMs are computed with the compatibility conditions. On the other hand, when the compatibility conditions are not used during MOR both properties can be lost and thus, the ROMs are bad approximations of the FOM, e.g., for $r_\uprho=6$ or $r_\uprho=10$. These behaviors also become visible in Figure \ref{fig:Err_CoC}, which shows $\mathfrak{E}_t$ and $\mathfrak{E}_{t,P}$ for different $r_\uprho$. The excellent error behavior in the left plot is only due to the compatibility conditions from Assumption \ref{As:CompAlg}. Here, the errors decline rapidly with increasing dimension. Whereas for MOR without compatibility conditions, only $\mathfrak{E}_{t,P}$ declines rapidly and the reduction errors show that even with increasing $r_\uprho$  a good approximation of the FOM is not guaranteed. Table \ref{tab:Cond_Err} shows $\mathfrak{E}_{\mathrm{pH}}$ for each ROM. Naturally, each ROM computed with the compatibility conditions from Assumption \ref{As:Comp} fulfills condition \eqref{Cond:Volker}. For the ROM with $r_\uprho=6$ computed without compatibility conditions, the pH-structure clearly is lost, as the error is $10^{-3}$. All others still fulfill condition \eqref{Cond:Volker}, even though the ROM with $r_\uprho=10$ does not yield a good approximation. These findings are in accordance with the results for the isothermal Euler equations, see \cite{LilSM21}.
\begin{figure}[!tb]
\includegraphics[scale=1]{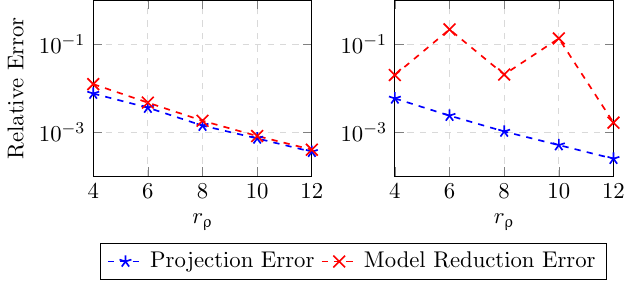}
\caption{$\mathfrak{E}_{t,P}$ and $\mathfrak{E}_t$ for ROMS with (left) and without (right) compatibility conditions.}\label{fig:Err_CoC}
\end{figure}
\begin{table}[t]
\centering
\setlength\extrarowheight{3pt}
\begin{tabular}{l||c|c|c|c|c|} 
$r_\uprho$	& 4 & 6 & 8 & 10 & 12 \\ \hline
With Comp.        &  $5.5\cdot 10^{-18}$ & $5.5\cdot 10^{-18}$  & $5.5\cdot 10^{-18}$  &  $5.5\cdot 10^{-18}$  & $5.5\cdot 10^{-18}$\\ \hline
Without Comp.     &  $5.5\cdot 10^{-18}$ & 0.001  & $5.5\cdot 10^{-18}$  & $5.5\cdot 10^{-18}$   & $5.5\cdot 10^{-18}$
\end{tabular}
\caption{Error with respect to Condition \eqref{Cond:Volker}, i.e., $\mathfrak{E}_\mathrm{pH}$.}
\label{tab:Cond_Err}
\end{table}

\subsubsection{Influence of the Parity of $r_\uprho$ on the Reduced Order Model}\label{Sec:Num_Parity}
\begin{figure}[!t]
\includegraphics[scale=1]{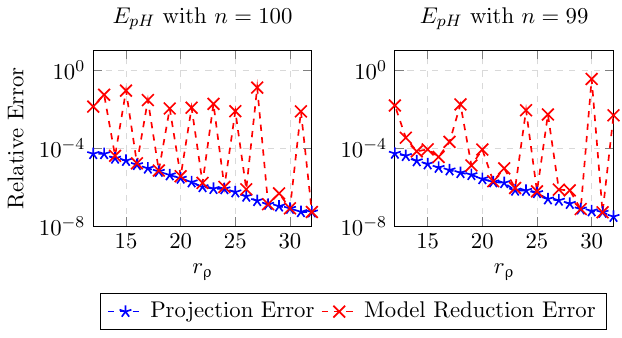}
\caption{$\mathfrak{E}_{t,P}$ and $\mathfrak{E}_{t}$ for different values of $r_\uprho$.}\label{fig:Err_Parity}
\end{figure}
In the following, we show that the parity of the FOM needs to be preserved when deducing a ROM. The set up is similar to the one in Section \ref{Sec:Num_CoC}, except for the boundary conditions, which are set as $\rho(t,0)=3,$ $m(t,L)=0.3,$ and $e(t,0)=9.$ The following numerical findings are similar for other settings of boundary conditions. The number of finite elements used for semi-discretization here is $n=100$, which is even. Furthermore, a second FOM based on System \ref{Sys:pHDCm} with an uneven number of finite element cells is studied, i.e., $n=99$. The ROMs are simulated using the same parameters, initial conditions and boundary values as the FOMs. From this and the findings from Section \ref{Sec:Num_CoC} we would expect that with increasing $r_\uprho$ the errors decrease. This unfortunately is not the case as Figure \ref{fig:Err_Parity} shows. For both scenarios the projection errors decline very rapidly, whereas the ROMs only lead to reduction errors smaller than $10^{-2}$ from $r_\uprho=12$. After this, $\mathcal{E}_{t,P}$ continues in a zig-zag fashion, meaning that the ROMs with $r_\uprho$ having the same parity as $n$ are very good approximations, whereas the ROMs with $r_\uprho$ not having the same parity as $n$ may lead to bad approximation errors. This zig-zag behavior stems from the part of the skew-adjoint interconnection operator $J(z)$ containing the derivative terms, i.e.,
\begin{align*}
J_\mathrm{div}(z)=\begin{bmatrix}0&-D_x&0\\-D_x&0&-\frac{e}{\rho}D_x-\frac{1}{\rho}D_xp\\0&-D_x\frac{e}{\rho}-pD_x\frac{1}{\rho}&0
\end{bmatrix}.
\end{align*}
The semi-discretization of $J_\mathrm{div}(z)$ is skew-symmetric and therefore its eigenvalues are purely imaginary pairs or zero. As \ref{eq:DCv} consist of three equations and three unknowns,
the system matrix $\mat{J}(z_\uph)$ is certainly singular. Therefore, the semi-discretized operator has an even or uneven number of zero eigenvalues, depending on the chosen number of finite element cells. Thus, ROMs with $r_\uprho$ not matching the parity of $n$ of the FOM do not reproduce the correct parity of zero eigenvalues and therefore, the MOR can lead to bad approximations. This is different to MOR applied to the isothermal Euler equations, as there the underlying operator is not singular.

\subsection{Model Order and Complexity reduction for a Network System}
\subsubsection{$A_{\Net}$ versus $A_{\omega}$}\label{Sec:NetVsPipe}
\begin{figure}[!t]
\centering
\begin{tikzpicture}
\draw[->,line width=1pt] (0,-5) -- (3,-5);
\draw[->,line width=1pt] (7,-5)--(10,-5);
\draw[->,line width=1pt] (3.05,-4.95) -- (5,-3.5);
\draw[->,line width=1pt] (3.05,-5.05) -- (5,-6.5);
\draw[->,line width=1pt] (5.05,-3.45) -- (7,-4.95);
\draw[->,line width=1pt] (5.05,-6.55)--(7,-5.05);
\fill (0,-5) circle (0.1);
\node at (0,-5.5) {$\nu_1$};
\fill (3.05,-5) circle (0.1);
\node at (3.05,-5.5) {$\nu_2$};
\fill (5.05,-3.45) circle (0.1);
\node at (5.05,-2.95) {$\nu_3$};
\fill (5.05,-6.55) circle (0.1);
\node at (5.05,-7.05) {$\nu_4$};
\fill (7.05,-5) circle (0.1);
\node at (7.05,-5.5) {$\nu_5$};
\fill (10.05,-5) circle (0.1);
\node at (10.05,-5.5) {$\nu_6$};
\node at (1.5,-5.5) {$\omega_1$};
\node at (3.5,-4) {$\omega_2$};
\node at (3.5,-6) {$\omega_3$};
\node at (8.5,-5.5) {$\omega_6$};
\node at (6.5,-4) {$\omega_4$};
\node at (6.5,-6) {$\omega_5$};
\end{tikzpicture}
\caption{Diamond network as used in Sections \ref{Sec:NetVsPipe} and \ref{Sec:RedNet}}
\label{fig:Diamond}
\end{figure}
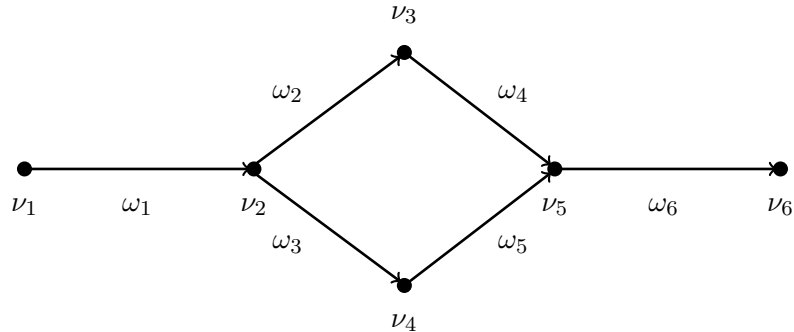
\begin{table}[t]
\centering
\setlength\extrarowheight{3pt}
\begin{tabular}{l||c|c|c|}
$\quad r_\uprho^\Net\quad$	& 28 & 29 & 30  \\ \hline
$\quad \mathfrak{E}_t\quad $& $4.5\cdot 10^{-06}$& $9.5\cdot 10^{-3}$ &  $4.1\cdot 10^{-06}$
\end{tabular}
\caption{$\mathfrak{E}_t$ for different $r_\uprho^\Net$ with $A_\Net$.}
\label{tab:Err_MOR_net}
\end{table}
In this section we take a closer look on the approaches $A_\omega$ and $A_\Net$ for model order and complexity reduction and their impact on the error behavior, see Section \ref{Sec:Mod} for an explanation of these abbreviations. For this, we consider a diamond network, see Figure \ref{fig:Diamond}, and the Set-up \ref{Set-Up}} with minor alterations, i.e., the lengths of the pipes are set to
\begin{align*}
L^{\omega_1}=0.55,\quad L^{\omega_2}=0.5,\quad L^{\omega_3}=0.5,\quad L^{\omega_4}=0.5,\quad L^{\omega_5}=0.5,\quad L^{\omega_6}=0.55.
\end{align*} 
and the cross-sectional area $A=1$ for each pipe. The inital values and boundary conditions given as
\begin{align*}
\rho(0,x)&=3,\quad &m(0,x)&=\begin{cases}0.3 \quad &\text{if } x\in\omega_1\cup\omega_6\\ 0.15\quad &\text{if } x\in\bigcup_{i=2}^5\omega_i\end{cases},\quad &e(0,x)&=9,\\
\rho(t,\nu_1)&=3,\quad &m(t,\nu_6)&=0.3,\quad &e(t,\nu_1)&=9.
\end{align*}
Each pipe is discretized in space with a spatial step size $\Delta x=0.01$, which means that pipe $\omega_1$ and $\omega_6$ have an uneven number of finite elments, i.e., 55, whereas all other pipes have an even number, i.e., 50. This results in $n^\Net=310$, which is even. The FOM is simulated with $\Delta t=0.1$ until the end time $t_f=30$ is reached. This yields a total of $301$ snapshots to compute the compatible projection bases.
\subsubsection*{Model Order Reduction}
The following ROMs are simulated with the same parameters, initial values and boundary conditions as the FOM. As stated in Section \ref{Sec:Mod} the network FOM can be reduced with two different approaches, i.e., $A_\Net$, which reduces the whole network at once, or $A_\omega$, which reduces each pipe separately. When using the approach $A_\Net$ the ROMs show the same error behavior as seen for the single pipe system in Section~\ref{Sec:Num_Parity}, which means that ROMs with an even $r_\uprho$ yield good approximations and ROMs with uneven $r_\uprho$ might yield bad approximations, see Table \ref{tab:Err_MOR_net}. Using the approach $A_\omega$ for MOR has two major drawbacks, as can be seen in Figure~\ref{fig:Err_552}, where $\mathfrak{E}_{t,P}$ and $\mathfrak{E}_t$ are plotted for different even $r_\uprho^\Net$. For $A_\Net$ there is almost no gap between the projection and reduction errors and at $r_\uprho^\Net=60$ the reduction error is already in $\mathcal{O}(10^{-9})$. The case is completely different when using $A_\omega$, here, $\mathfrak{E}_{t,P}$ for $r_\uprho^\Net=60$ is in $\mathcal{O}(10^{-6})$, which is three orders of magnitude higher than for $A_\Net$. Furthemore, $\mathfrak{E}_t$ here is at $\mathfrak{E}_t=0.12$. This is the case, as the $r_\uprho^{\omega_i}$ , $i=1,\dots,6$, for each pipe have been chosen with the wrong parity, i.e., 
$$r_\uprho^{\omega_1}=12,\,r_\uprho^{\omega_2}=9,\,r_\uprho^{\omega_3}=9,\,r_\uprho^{\omega_4}=9,\, r_\uprho^{\omega_5}=9 \text{ and } r_\uprho^{\omega_6}=12,$$ such that $r_\uprho^\Net=\sum_{i=1}^6 r_\uprho^{\omega_i}=60$. When increasing each $r_\uprho^{\omega_i}$ by 1 the parities fit again those of the number of finite elements of the FOM on each pipe, in total $r_\uprho^\Net=66$, and the reduction error drops to $\mathfrak{E}_t=5\cdot 10^{-6}$. Thus, we not only have to deal with higher errors, when using $A_\omega$, but we also have to have knowledge over the discretization of each individual pipe, in order to choose the reduction parameters right.

\begin{figure}[!t]
\includegraphics[scale=1]{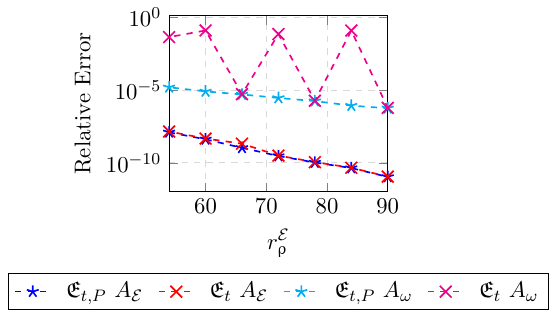}
\caption{Relative $\spa{L}^2$-error for different $r_\uprho^\Net$ and both MOR approaches $A_\Net$ and $A_\omega$.}\label{fig:Err_552}
\end{figure}

\subsubsection*{Complexity Reduction}
As a next step MOR can be combined with complexity reduction. This procedure can also be applied in a pipe-wise ($A_\omega^\upc$) or network-wise ($A_\Net^\upc$) manner. When $A_\omega^\upc$ is used $n_\upc^\Net=\sum_{i=1}^{|\Net|}n_\upc^{\omega_i}$. In this subsection we pair the MOR approaches with $A_\omega^\upc$ and $A_\Net^\upc$ using the empirical quadrature ansatz and DEIM for complexity reduction. Based on ROMs with $r_\uprho^\Net=66$, i.e., $r_\uprho^{\omega_1}=r_\uprho^{\omega_6}=13$ and $r_\uprho^{\omega_i}=10$ for $i=2,\dots,5$, the complexity reduced models are compared. Figure \ref{fig:Err_552_cROM} shows $\mathcal{E}_t$ for the ROMs without complexity reduction in red and magenta, this acts as a best-case-error for the complexity reduced systems. In the left plot of Figure \ref{fig:Err_552_cROM} we see that DEIM does not lead to acceptable complexity reduced systems. Even the errors produced by the empirical quadrature approach show a big gap towards the ROM without complexity reduction, even though almost $46\%$ of the finite elements of the FOM are preserved using $n_\upc^\Net=142$. In the left plot of Figure \ref{fig:Err_552_cROM} we omit the error plot for $\mathfrak{E}_{t,Quad}\,A_\Net^\upc$ for better readability as it does not lead to good approximations in the selected range for $n_\upc^\Net$. When equipping the pipe-wise reduced system with complexity reduction the picture changes. For $A_\Net^\upc$ DEIM leads only to bad approximations, but the empirical quadrature ansatz yields very good approximations from $n_\upc^\Net=136$, as here the error is in $\mathcal{O}(10^{-5})$, which is only one order of magnitude higher than the error of the ROM without complexity reduction. Furthermore, for $n_\upc^\Net=142$ the error is even one order of magnitude smaller than the error produced when $A_\Net^\upc$ using empirical quadrature is applied to the ROM computed by $A_\Net$, which is in $\mathcal{O}(10^{-4})$. As for $A_\omega$ paired with $A_\omega^\upc$ Figure \ref{fig:Err_552_cROM} shows that DEIM fails except for $n_\upc^\Net=136$, i.e., $n_\upc^{\omega_1}=n_\upc^{\omega_5}=25$, $n_\upc^{\omega_2}=n_\upc^{\omega_3}=19$, $n_\upc^{\omega_4}=n_\upc^{\omega_5}=24$, which is one of the few settings where pipe-wise empirical quadrature fails for this example. Otherwise, this approach leads from $n_\upc^\Net=112$ to declining errors, which are in $\mathcal{O}(10^{-5})$ for $n_\upc^\Net=130$ and $n_\upc^\Net=142$, which is one order of magnitude higher than the pure reduction error. Using the approach $A_\omega^\upc$ for complexity reduction gives the user more possibilities to fine tune the empirical quadrature ansatz. 
\begin{table}[t]
\centering
\setlength\extrarowheight{3pt}
\begin{tabular}{l||c|c|c|c|c|c|}
				& $n_\upc^{\omega_1}$&$n_\upc^{\omega_2}$&$n_\upc^{\omega_3}$&$n_\upc^{\omega_4}$&$n_\upc^{\omega_5}$&$n_\upc^{\omega_6}$\\ \hline
$A_\Net^\upc$	&27  & 21 & 21&18&18&25  \\ \hline
$A_\omega^\upc$& 24& 18 &  18&23&23&24
\end{tabular}
\caption{Pipe-wise $n_\upc$ for $A_\Net^\upc$ and $A_\omega^\upc$ with $n_\upc^\Net=130$.}
\label{tab:552_Comp}
\end{table}
For, e.g., $n_\upc^\Net=130$ the pipe-wise complexity reduction approach leads to an error in $\mathcal{O}(10^{-5})$ and the network approach leads to an error of $\mathfrak{E}_t=0.057$. For $A_\Net^\upc$ the distribution of the $n_\upc^\Net=130$ finite elements onto the pipes is done an algortithm \cite{LilSM21}, whereas for $A_\omega^\upc$ this is done by the user. Table \ref{tab:552_Comp} shows the pipe-wise number of preserved finite elements. Eventhough the distributions only differ slightly, the impact on the error of the complexity reduced system is enormous. Lastly, we give an example showing that $A_\Net$ paired with complexity reduction can also lead to better error behavior. Based on a ROM with $r_\uprho^\Net=28$ Figure \ref{fig:Err_552_cROM_2} shows that $A_\Net^\upc$ with empirical quadrature, both $A_\Net^\upc$ and $A_\omega^\upc$, leads to very good approximations. Even pipe-wise DEIM leads to good approximations, whereas $A_\Net^\upc$ using DEIM fails again. We cannot compare this to complexity reduction based on a pipe-wise reduced ROM with $r_\uprho^\Net=28$, as model reduction leads to an error in $\mathcal{O}(10^{-1})$.
\begin{figure}[!t]
\includegraphics[scale=1]{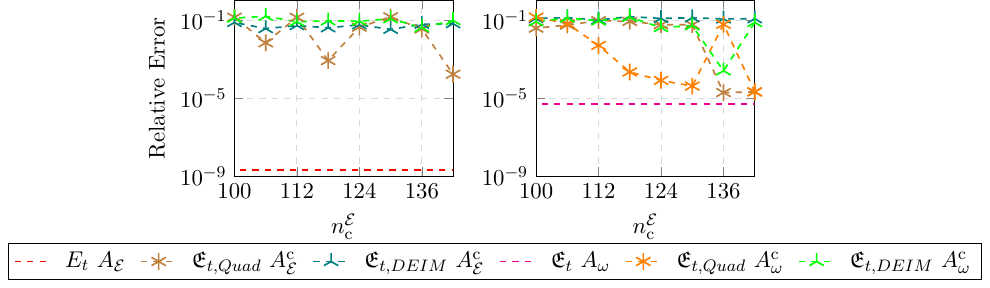}
\caption{Relative $\spa{L}^2$-error for different $n_\upc^\Net$ and both MOR approaches $A_\Net$ (left) and $A_\omega$ (right).}\label{fig:Err_552_cROM}
\end{figure}

\begin{figure}[!t]
\includegraphics[scale=1]{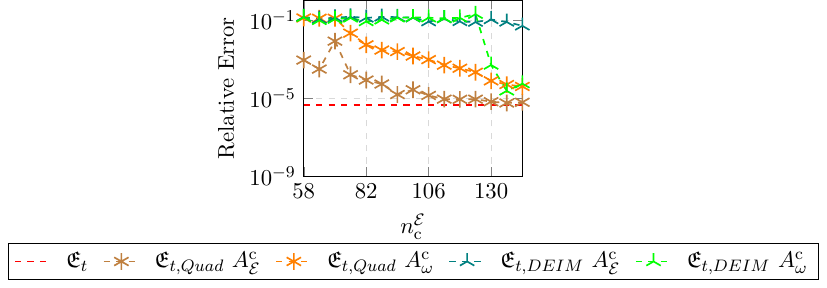}
\caption{Relative $\spa{L}^2$-error for different $n_\upc^\Net$ for $A_\Net$ and $r_\uprho^\Net=28$.}\label{fig:Err_552_cROM_2}
\end{figure}

\subsubsection{Reducibility of a Network System}\label{Sec:RedNet}
\begin{figure}[!t]
\includegraphics[scale=1]{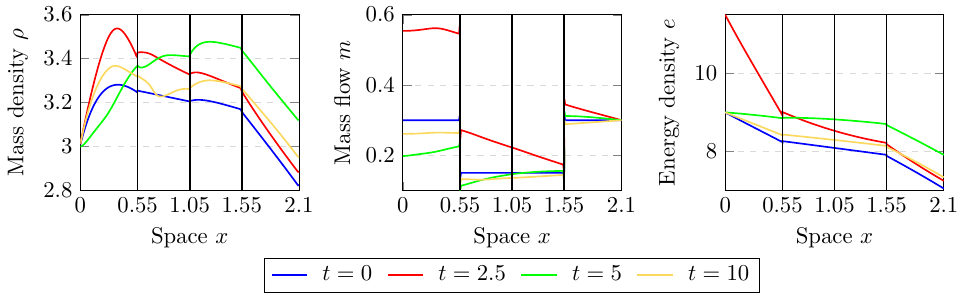}
\caption{State variables $\rho$, $m$ and $e$ along path at different time points.}\label{fig:States_553}
\end{figure}
In this section we examine MOR and complexity reduction for a coupled network system, which couples different pH-models of compressible fluid flow. We choose almost the same set-up as in the previous Section. The only differences are in the boundary condition for the energy density, i.e., a saw tooth,
\begin{align*}
e(t,0)=9+\begin{cases} t \quad &\text{for } 0\leq t<2.5,\\
                      5-t \quad &\text{for } 2.5\leq t <5,\\
                      0\quad &\text{else}, \end{cases}
\end{align*}
and in the different models on each pipe. On pipes $\omega_2$ and $\omega_3$ we set the damping coefficient $\lambda=0$ in the energy equation and on pipes $\omega_4$ and $\omega_5$ we additionally set $k_\omega=0$. For the pH-formulations of these models see \cite{Hau24_Dis}. The solution trajectories along the path $\mathrm{P}=\omega_1\cup\omega_2\cup\omega_4\cup\omega_6$ are plotted for selected time points in Figure \ref{fig:States_553}. Here, it becomes visible that the solutions exhibit discontinuities, whenever more than two pipes meet at a coupling node. 
\begin{figure}[!t]
\includegraphics[scale=1]{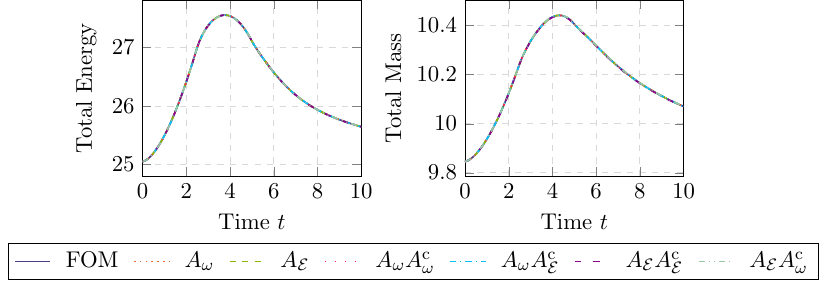}
\caption{Total energy and total mass for each FOM and ROM.} \label{fig:EM_553}
\end{figure}
In the following we apply different approaches for MOR, i.e., $A_\omega$ and $A_\Net$, and complexity reduction, i.e., $A_\omega^\upc$ and $A_\Net^\upc$, to this example and compare their error behavior and run times, see Table \ref{tab:553}. 
\begin{table}[t]
\centering
\setlength\extrarowheight{3pt}
\begin{tabular}{l||c|c|c|c|c|c|c|}
	& FOM &$A_\omega$&$A_\Net$&$A_\omega A_\omega^\upc$&$A_\omega A_\Net^\upc$&$A_\Net A_\Net^\upc$&$A_\Net A_\omega^\upc$\\ \hline
Time & 1,584s& 144s              & 526s             &117s              &126s             &300s  &319s \\ \hline
$\mathfrak{E}_t$& -     & $3.9\cdot10^{-4}$ & $1.4\cdot10^{-6}$&$4.7\cdot10^{-4}$&$3.9\cdot10^{-4}$&$5.3\cdot10^{-4}$&$1.5\cdot 10^{-4}$ \\ \hline
$\mathfrak{E}_{pH}$&$1.3\cdot 10^{-17}$&$2.3\cdot 10^{-16}$&$1.5\cdot 10^{-15}$&$2.3\cdot 10^{-16}$&$2.3\cdot 10^{-16}$&$1.5\cdot 10^{-15}$& $1.5\cdot 10^{-15}$
\end{tabular}
\caption{CPU times and errors for full and order/complexity reduced systems.}
\label{tab:553}
\end{table}
For complexity reduction we only consider the empirical quadrature approach. The FOM is set up analogously to Section \ref{Sec:NetVsPipe}. The following set-ups for model and complexity reduction are exemplary. We start by applying the pipe-wise model reduction approach, i.e., $A_\omega$, with the reduced dimensions $r_\uprho^{\omega_i}$, $i=1,\dots,6$, set to
\begin{align*}
r_\uprho^{\omega_1}=15,\quad r_\uprho^{\omega_2}=4,\quad r_\uprho^{\omega_3}=4,\quad r_\uprho^{\omega_4}=14,\quad r_\uprho^{\omega_5}=14 \quad r_\uprho^{\omega_6}=3,
\end{align*}
which sums up to $r_\uprho^\Net=54$. This MOR approach can be supplemented with with pipe-wise complexity reduction, i.e., $A_\omega A_\omega^\upc$, or network complexity reduction , i.e., $A_\omega A_\Net^\upc$.
\begin{figure}[!t]
\includegraphics[scale=1]{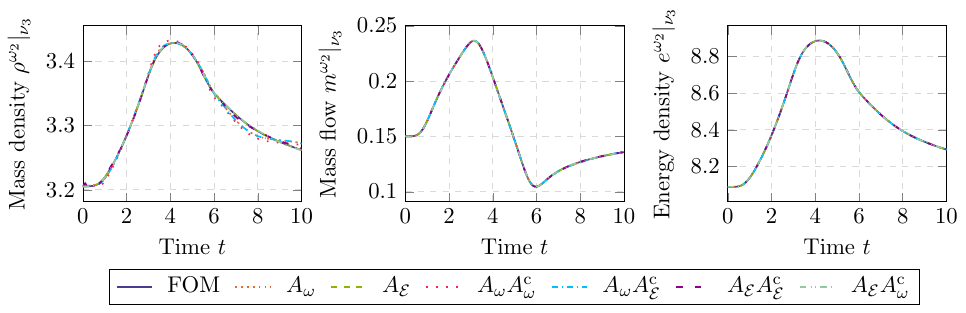}
\caption{State variables $\rho^{\omega_2}$, $m^{\omega_2}$ and $e^{\omega_2}$ at $\nu_3$ over time.}\label{fig:Time_553}
\end{figure}
For the first, we choose the pipe-wise number of preserved finite elements to be
\begin{align*}
n_\upc^{\omega_1}=25\quad n_\upc^{\omega_2}=6\quad n_\upc^{\omega_3}=6\quad n_\upc^{\omega_4}=30\quad n_\upc^{\omega_5}=30\quad n_\upc^{\omega_6}=6,
\end{align*}
which sums up to $n_\upc^\Net=103$. This configuration seems to be the smallest, which yields a complexity reduced system with an error smaller than $10^{-2}$, i.e., in $\mathcal{O}(10^{-4})$. Using $n_\upc^\Net=103$ for $A_\Net^\upc$ yields a relative $\spa{L}^2$-error of $\mathfrak{E}_t=0.0911$. Thus more degrees of freedom are needed, when mixing pipe-wise MOR with network complexity reduction for this example. The smallest number leading to a good approximation is $n_\upc^\Net=130$. Applying MOR for the whole network, instead of to each pipe, with $r_\uprho^\Net=54$ leads to an error of $\mathcal{O}(10^{-6})$, which is two degrees of magnitude smaller than when applying $A_\omega$. Nonetheless, the CPU run time of this reduced system is more than 3.5 times slower than when using $A_\omega$ and only three times faster than the FOM. Lastly, we can combine $A_\Net$ with $A_\Net^\upc$ and $A_\omega^\upc$. For the first, the smallest number yielding a good approximationj is $n_\upc^\Net=118$ and for $A_\omega^\upc$ we use
\begin{align*}
n_\upc^{\omega_1}=35\quad n_\upc^{\omega_2}=30\quad n_\upc^{\omega_3}=30\quad n_\upc^{\omega_4}=35\quad n_\upc^{\omega_5}=35\quad n_\upc^{\omega_6}=30.
\end{align*}
Thus, the approach $A_\Net A_\Net^\upc$ needs less degrees of freedom than $A_\omega A_\Net^\upc$ to generate systems of the same error magnitude, see Table \ref{tab:553}. On the other hand the approach $A_\Net A_\omega^\upc$ needs a lot more degrees of freedom than $A_\omega A_\omega^\upc$ to yield a comparable error and is $2.7$ times slower.
\begin{figure}[!t]
\includegraphics[scale=1]{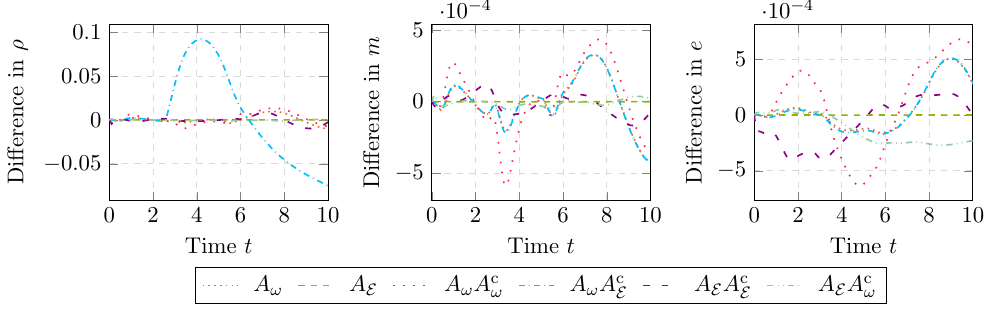}
\caption{Difference of full and reduced variables $\rho^{\omega_2}$, $m^{\omega_2}$ and $e^{\omega_2}$ at $\nu_3$ over time.}\label{fig:Diff_553}
\end{figure}
This table also shows, that using $A_\omega A_\omega^\upc$ has the biggest speed up, i.e., 14, compared to the FOM. This might follow from the carefully choosen configurations for the pipe-wise $r_\uprho$ and $n_\upc$. Furthermore, all order and complexity reduced systems preserve the total energy and mass of the FOM, as can be seen in Figure \ref{fig:EM_553}, and also the pH-structure, as the error $E_{pH}$ is close to machine precision, i.e., condition \eqref{Cond:Volker} is fulfilled, see Table \ref{tab:553}. Lastly, we examine the temporal evolution of the states $\rho^{\omega_2}$, $m^{\omega_2}$ and $e^{\omega_2}$ at the coupling node $\nu_3$, see Figure \ref{fig:Time_553}. As most of the reduced states approximate the FOM quite good to eye precision, the differences with respect to the FOM of all reduced systems is plotted in Figure \ref{fig:Diff_553}. These show that the biggest errors are made in $\rho^{\omega_2}[\nu_3]$, i.e., for $A_\omega A_\omega^\upc$ in $\mathcal{O}(10^{-2})$, for $A_\omega$, $A_\omega A_\omega^\upc$ and $A_\Net A_\Net^\upc$ in $\mathcal{O}(10^{-3})$. Whereas, the error in $\rho^{\omega_2}[\nu_3]$ for $A_\Net A_\omega^\upc$ is in $\mathcal{O}(10^{-4})$. The errors for $m^{\omega_2}[\nu_3]$ and $e^{\omega_2}[\nu_3]$ are in $\mathcal{O}(10^{-4})$. The best error behavior is achieved when $A_\Net$ without complexity reduction is applied, as we have the errors for $\rho^{\omega_2}[\nu_3]$ in $\mathcal{O}(10^{-6})$, for $m^{\omega_2}[\nu_3]$ and $e^{\omega_2}[\nu_2]$ in $\mathcal{O}(10^{-7})$.

\section{Conclusion}\label{Sec:Con}
\noindent Summarizing, we introduced a novel infinite dimensional port-Hamiltonian formulation for the compressible non-isothermal Euler equations. Here, we circumnavigated the difficulties of the GENERIC approach by working only with the Hamiltonian despite considering a thermodynamic system.
We closed the system by introducing the boundary port and backed it with the underlying Stokes-Dirac structure. The boundary port enabled us to incorporate boundary and coupling conditions in a structure-preserving way into our formulation. Furthermore, we preserved the port-Hamiltonian and thermodynamic structure during all stages of approximation. Finally, academic numerical examples supported our theoretical findings. First, we compared model reduction with and without compatibility conditions, which showed that these conditions are necessary to keep the energy dissipation and mass conservation of the full order model. Nevertheless, reduced models without compatibility conditions can also lead to acceptable approximations by chance. A peculiarity of model order reduction applied to the non-isothermal compressible Euler equations is that the parity of the dimension of the reduced order model needs to be the same as of the full dimension. Otherwise, model reduction might lead to bad approximations. This behavior can be explained by the eigenvalue structure of the skew-symmetric and singular interconnection matrix $\mat{J}(\vect{z})$.
Furthermore, we showed that pipe-wise and network-wise model order and complexity reduction have both advantages and disadvantages. Here, network-wise model reduction leads for example to smaller errors than pipe-wise MOR. Pipe-wise complexity reduction gives more freedom in fine tuning the pipe-wise number of preserved finite elements, but also needs more insight in the topology and discretization parameters of the network. We further showed that complexity reduction by empirical quadrature is superior to DEIM, as it preserves the structure and leads to good approximations with fewer degrees of freedom. 
This work acts as a basis for the next steps in research concerning the modeling, simulation and optimization of networks with compressible fluid flow. Besides the application of the procedures introduced here to real-world gas networks, the optimal control of the emerging systems is of importance in the light of a successful energy transition. Here, also the coupling with other port-Hamiltonian network systems, e.g., power and isothermal gas, comes to mind. Finally, the adaption of these results to a port-Hamiltonian formulation of incompressible non-isothermal fluid flow is a very interesting question of research.

\appendix
\section{Coordinate Representations}\label{Ap:CoRep}

\subsection{Full Order Model}\label{App:FOMCoRep}
Let us introduce $\{\psi_j\}_{j=1\dots n_1}$, $\{\varphi_i\}_{i=1\dots n_2}$, $\{\phi_\iota\}_{\iota=1\dots n_3}$ as finite dimensional bases of $\spa{V}_\uprho$, $\spa{V}_\upm$, $\spa{V}_\upe$, respectively. Then we have $\rho_\uph(t,x)=\sum_{j=1}^{n_1}\rho_j(t)\psi_j(x)$ and $m_\uph(t,x)=\sum_{i=1}^{n_2}m_i(t)\varphi_i(x)$ as well as $e_\uph(t,x)=\sum_{\iota=1}^{n_3}e_\iota(t)\phi_\iota(x)$ and $1_\uph=\sum_{\iota=1}^{n_3}\phi_\iota(x)$. We collect the time dependent coefficients in the vectors $\brho=[\rho_1\quad\dots\quad\rho_{n_1}]^T$, $\bm=[m_1\quad\dots\quad m_{n_2}]^T$, $\be=[e_1\quad \dots\quad e_{n_3}]^T$. 
\begin{coordinate}\label{Sys:CoRep}
Let $z_\uph=[\rho_h\quad m_\uph\quad e_\uph]^T$. Then the coordinate representation of the semi-discrete system under Assumption \ref{as:IdealGas} is given as,
\begin{align*}
\mat{M}_{\uprho}\dot{\brho}&=\mat{J}_{\uprho,\upm}\bm,\\
\mat{M}_{\upm,\uprho}(z_\uph)\dot{\brho}+\mat{M}_{\upm,\upm}(z_\uph)\dot{\bm}&=-\mat{J}_{\uprho,\upm}^T\boldsymbol{\varepsilon}(z_\uph)+(\mat{J}_{\upm,\upe}(z_\uph)+\JRM(z_\uph))\vect{1} +\mat{T}_\upm\blam_\upm,\\
\mat{M}_\upe\dot{\be}&=(-\mat{J}_{\upm,\upe}^T(z_\uph)-\JRM^T(z_\uph))\bm-\mat{R}_{\upe,\upe}(z_\uph)\vect{1}+\vect{t}_\upe(z_\uph)\lambda_\upe+\vect{b}_\upe\Bflo|_L+\vect{b}u,\\
\vect{0}&=-\mat{T}_\upm^T\bm+\vect{\Bflo},\\
0&=-\vect{t}_\upe^T(z_\uph)\vect{1}+\frac{\eZ}{\BfloZ}\BfloZ,
\end{align*}
with output equation $\vect{y}_{\mathrm{B}}=\vect{\Beff}$ and input $\vect{u}=[T_\infty\quad \BfloL\quad \BfloZ]^T$ and 
\begin{align}\label{eq:FEMSD}
\mat{M}_\uprho&=[(\psi_q,\psi_j)]_{j,q=1\dots n_1},&\mat{J}_{\uprho,\upm}&=[-(\partial_x\varphi_\iota,\psi_j)]_{\substack{j=1\dots n_1\\\iota=1\dots n_2}} ,&\mat{T}_\upm&=[T_\upm\varphi_1\quad\dots\quad T_\upm\varphi_{n_2}]^T,\nonumber\\
\mat{M}_{\upm,\uprho}&=[-(\frac{m_\uph}{\rho_\uph^2}\psi_j,\varphi_i)]_{\substack{i=1\dots n_2,\\ j=1\dots n_1}}, &\mat{J}_{\upm,\upe}&=[(\frac{-e_\uph\partial_x\phi_\iota-\partial_x(p_\uph\phi_\iota)}{\rho_\uph},\varphi_i)]_{\substack{i=1\dots n_2,\\\iota=1\dots n_3}}, &\vect{t}_\upe&=[T_\upe\phi_1\quad\dots\quad T_\upe\phi_{n_3}]^T,\\
\mat{M}_{\upm,\upm}&=[(\frac{1}{\rho_\uph}\varphi_\iota,\varphi_i)]_{i,\iota=1\dots n_2}, &\JRM&=[-(\frac{\lambda}{2d}\frac{m_\uph|m_\uph|}{\rho_\uph^2}\phi_\iota,\varphi_i)]_{\substack{i=1\dots n_2,\\\iota=1\dots n_3}}, &\vect{b}&=[(\frac{k_\omega}{d},\phi_\iota)]_{\iota=1\dots n_3}, \nonumber\\
\mat{M}_\upe&=[(\phi_\iota,\phi_i)]_{i,\iota=1\dots n_3},&\mat{R}_{\upe,\upe}&=[(\frac{k_\omega}{d}T_\uph \phi_i,\phi_\iota)]_{i,\iota=1\dots n_3},&\vect{b}_\upe&=[\frac{e+p}{\rho}|_L\phi_i|_L]_{i=1,\dots,n_3}\nonumber,
\end{align}
and $\boldsymbol{\varepsilon}(z_\uph)=\mat{M}_\uprho^{-1}\vect{f}, \vect{f}=[(\frac{m_\uph^2}{2\rho_\uph^2},\psi_j)]_{j=1\dots n_1}$. Here, we have that $\mat{R}_{\upe,\upe}=\mat{R}_{\upe,\upe}^T\geq 0$. The semi-discretized Hamiltonian is given by $\Ham_\mathrm{Semi}(\vect{z})=\frac{1}{2}\bm^T\mat{M}_{\upm,\upm}(z_\uph)\bm+\vect{1}^T\mat{M}_\upe\be.$
\end{coordinate}
\noindent The term $\vect{y}_\mathrm{B}$ solely denotes the output created by the boundary terms, originating from partial integration. The output $y_\mathrm{P}=\frac{k_\omega}{d}$ belongs to the external port, but has no deeper meaning. Coordinate Representation \ref{Sys:CoRep} is a differential-algebraic equation of differentiation index two, see \cite{Hau24_Dis}.

\subsection{Reduced Order Model}\label{Sec:ROM}
Having computed the projection matrices by Algorithms \ref{Algo:PODW1W3}, \ref{Algo:POD} and \ref{Algo:CompBasis}, we can set up the reduction basis $\mat{V}_\upr$ from Assumption \ref{As:CompAlg}. The reduced model is then given as follows.
\begin{coordinate}\label{eq:TensMOR}
The reduced order model of Coordinate Representation \ref{Sys:CoRep} can be constructed by Galerkin projection as follows,
\begin{align*}
\mat{M}_\uprho^\upr\dot{\brho}_\upr&=\mat{J}^\upr_{\uprho,\upm}\bm_\upr,\nonumber\\
\mat{M}^\upr_{\upm,\uprho}\dot{\brho}_\upr+\mat{M}^\upr_{\upm,\upm}\dot{\bm}_\upr&=-\mat{J}^{\upr\,T}_{\uprho,\upm}\boldsymbol{\varepsilon}_\upr+(\mat{J}^\upr_{\upm,\upe}+\tilde{\mat{J}}^\upr_{\upm,\upe})\vect{1}_\upr+\mat{T}^\upr_m\blam_\upm,\nonumber\\
\mat{M}_\upe^\upr\dot{\be}_\upr&=(-\mat{J}^{\upr\,T}_{\upm,\upe}-\tilde{\mat{J}}^{\upr\,T}_{\upm,\upe})\bm_\upr-\mat{R}_{\upe,\upe}^\upr\vect{1}_\upr+\vect{b}^\upr u+\vect{t}^\upr_\upe(\mat{V}_\upe\be_\upr)\lambda_\upe+\vect{b}^\upr_\upe(\mat{V}_\uprho\brho_\upr,\mat{V}_\upe\be_\upr)\BfloL,\\
\vect{0}&=-(\mat{T}^\upr_\upm)^T\bm_\upr+\vect{\Bflo},\nonumber\\
0&=-(\vect{t}^\upr_\upe(\mat{V}_\upe\be_\upr))^T\vect{1}_\upr+\frac{\eZ}{\BfloZ}\BfloZ,\nonumber
\end{align*}
with
\begin{align*}
\mat{M}_\uprho^\upr&=\V_\uprho^T\mat{M}_\uprho\V_\uprho,&\mat{J}_{\uprho,\upm}^\upr&=\V_\uprho^T\mat{J}_{\uprho,\upm}\V_\upm,&\mat{T}_\upm^\upr&=\V_\upm^T\mat{T}_\upm,\nonumber \\
\mat{M}^\upr_{\upm,\uprho}&=\V_\upm^T\mat{M}_{\upm,\uprho}\V_\uprho, &\mat{J}^\upr_{\upm,\upe}&=\V_\upm^T\mat{J}_{\upm,\upe}\V_\upe,&\vect{t}_\upe^\upr&=\V_\upe^T\vect{t}_\upe,\nonumber\\
\mat{M}^\upr_{\upm,\upm}&=\V_\upm^T\mat{M}_{\upm,\upm}\V_\upm, &\tilde{\mat{J}}^\upr_{\upm,\upe}&=\V_\upm^T\tilde{\mat{J}}_{\upm,\upe}\V_\upe  , &\vect{b}^\upr&=\V_\upe^T\vect{b},\nonumber\\
\mat{M}_\upe^\upr&=\V_\upe^T\mat{M}_\upe \V_\upe,&\mat{R}^\upr_{\upe,\upe}&=\V_\upe^T\mat{R}_{\upe,\upe}\V_\upe,,&\vect{b}_\upe^\upr&=\V_\upe^T\vect{b}_\upe,\nonumber\\
\vect{1}_\upr&=\V_\upe^\dag\vect{1}, &\boldsymbol{\varepsilon}_\upr&=\V_\uprho^\dag\boldsymbol{\varepsilon}  & \V_\mathrm{x}^\dag&=(\V_\mathrm{x}^T\mat{M}_\mathrm{x} \V_\mathrm{x})^{-1}\V_\mathrm{x}^T\mat{M}_\mathrm{x},\nonumber
\end{align*}
with $\mathrm{x}\in\{\uprho,\upe\}$ and the Hamiltonian is given as $\Ham_\mathrm{ROM}(\vect{z}_\upr)=\frac{1}{2}\bm_\upr^T\mat{M}^\upr_{\upm,\upm}\bm_\upr+\vect{1}^T\mat{M}_\upe\mat{V}_\upe\be_\upr.$
\end{coordinate}
\noindent By construction the symmetric positive definiteness of $\mat{R}_{\upe,\upe}^r=(\mat{R}_{\upe,\upe}^r)^T\geq 0$ are preserved. Furthermore, condition \eqref{Cond:Volker} is fulfilled, such that the reduced order model is still port-Hamiltonian, see \cite{Hau24_Dis} for the proof.
\subsection{Complexity Reduced Coordinate Representation}
\begin{coordinate}\label{Sys:EmpQuad}
The complexity reduced system can be deduced from Coordinate Representation \ref{eq:TensMOR},
\begin{align*}
\mat{M}_\uprho^\upr\dot{\brho}_\upr&=\mat{J}^\upr_{\uprho,\upm}\bm_\upr,\nonumber\\
\mat{M}^\upc_{\upm,\uprho}\dot{\brho}_\upr+\mat{M}^\upc_{\upm,\upm}\dot{\bm}_\upr&=\mat{J}^\upr_{\upm,\uprho}\boldsymbol{\varepsilon}_\upc+(\mat{J}^\upc_{\upm,\upe}+\tilde{\mat{J}}^\upc_{\upm,\upe})\vect{1}_\upr+\mat{T}^\upr_m\blam_\upm,\nonumber\\
\mat{M}_\upe^\upr\dot{\be}_\upr&=(\mat{J}^\upc_{\upe,\upm}+\tilde{\mat{J}}_{\upe,\upm}^\upc)\bm_\upr-\mat{R}^\upc_{\upe,\upe}\vect{1}_\upr+\vect{b}^\upr u+\vect{t}^\upr_\upe(\mat{V}_\upe\be_\upr)\lambda_\upe+\vect{b}^\upr_\upe(\mat{V}_\uprho\brho_\upr,\mat{V}_\upe\be_\upr)\BfloL,\nonumber\\
\vect{0}&=-(\mat{T}^\upr_\upm)^T\bm_\upr+\vect{\Bflo},\nonumber\\
0&=-(\vect{t}^\upr_\upe(\mat{V}_\upe\be_\upr))^T\vect{1}_\upr+\frac{\eZ}{\BfloZ}\BfloZ,\nonumber
\end{align*}
with complexity reduced matrices
\begin{align*}
\mat{M}^\upc_{\upm,\uprho}&=[-(\frac{m_\uph}{\rho_\uph^2}\psi_j,\varphi_i)_\upc]_{\substack{i=1\dots n_2,\\ j=1\dots n_1}},   &\mat{M}_{\upm,\upm}^\upc&=[(\frac{1}{\rho_\uph}\varphi_\iota,\varphi_i)_\upc]_{i,\iota=1\dots n_2},\\
\mat{J}_{\upm,\upe}^\upc&=[(\frac{-e_\uph\partial_x\phi_\iota-\partial_x(p_\uph\phi_\iota)_\upc}{\rho_\uph},\varphi_i)]_{\substack{i=1\dots n_2,\\\iota=1\dots n_3}},
& \JRM^\upc&=[-(\frac{\lambda}{2d}\frac{m_\uph|m_\uph|}{\rho_\uph^2}\phi_\iota,\varphi_i)_\upc]_{\substack{i=1\dots n_2,\\\iota=1\dots n_3}},\\ 
\mat{R}_{\upe,\upe}^\upc&=[(\frac{k_\omega}{d}T_\uph \phi_i,\phi_\iota)_\upc]_{i,\iota=1\dots n_3},
\end{align*}
the other matrices are given as Coordinate Representation \ref{eq:TensMOR}.
The complexity reduced Hamiltonian is given by $\Ham_\mathrm{Quad}(\vect{z}_\upr)=\frac{1}{2}\bm_\upr^T\mat{M}^\upc_{\upm,\upm}\bm_\upr+\vect{1}^T\mat{M}_\upe\mat{V}_\upe\be_\upr.$
\end{coordinate}
\subsection{Coordinate Formulation on Network}\label{Sec:CoRepNet}
To set up the coordinate representation of the network system we use the structure-preserving space discretization on each pipe introduced in Chapter \ref{Sec:SpaceDis}and introduce the coupling matrix $\mat{C}$. Without loss of generality we assume that the interior nodes are numbered starting with $l=1$, such that $\mathcal{N}_0=\{\nu_1,\dots,\nu_j\}$. Furthermore, let the pipes adjacent to inner node $\nu\in\mathcal{N}_0$ be numbered in the following way $\mathcal{E}(\nu)=\{\omega_1,\dots,\omega_{k_{\nu}}\}$ with $k_\nu=|\mathcal{E}(\nu)|$.
\begin{definition}\label{Def:CoupMat}
Let $\vect{u}_0=[\Bflo^{\omega_1}|_{\nu_1}\quad \dots\Bflo^{\omega_{k_{\nu_1}}}|_{\nu_1}\quad \dots \quad\Bflo^{\omega_1}|_{\nu_j}\quad \dots\quad\Bflo^{\omega_{k_{\nu_j}}}|_{\nu_j}]^T$, i.e., the boundary flows at all coupling nodes. Then the coupling matrix $\mat{C}\in\mathbb{R}^{|\mathcal{N}_0|\times |\vect{u}_0|}$ is defined by
\begin{align*}
\mat{C}_{ik}=\begin{cases}1 \text{ if }[\vect{u}_0]_k \text{ is defined on } \omega\in\mathcal{E}(\nu_i)\\ 0 \text{ otherwise}\end{cases},
\end{align*}
with $\nu_i\in \mathcal{N}_0$.
\end{definition} 
\begin{coordinate}\label{eq:coupledNetwork}
The finite dimensional port-Hamiltonian system for a coupled network of pipes for $\vect{z}^\mathcal{E}=[(\brho^\mathcal{E})^T\quad (\bm^\mathcal{E})^T\quad (\be^\mathcal{E})^T\quad (\blam_\upm^\mathcal{E})^T\quad (\blam_\upe^\mathcal{E})^T]^T$ is given by
\begin{align*}
\begin{bmatrix}\mat{E}^{\mathcal{E}}(\vect{z}^{\mathcal{E}}) &\sz&\sz\\\sz&\sz&\sz\\\sz&\sz&\sz
\end{bmatrix}\begin{bmatrix}\dot{\vect{z}}^{\mathcal{E}}\\\dot{\vect{u}}_0\\\dot{\blam}_\upH\end{bmatrix}&=\begin{bmatrix}
\mat{J}^{\mathcal{E}}(\vect{z}^{\mathcal{E}})-\mat{R}(\vect{z}^{\mathcal{E}})& \mat{B}_0(\vect{z}^{\mathcal{E}})&\sz\\-\mat{B}_0^T(\vect{z}^{\mathcal{E}}) &\sz & \mat{C}^T\\\sz&-\mat{C}&\sz\end{bmatrix}\begin{bmatrix}\vect{\eff}^\mathcal{E}(\vect{z}^{\mathcal{E}})\\\vect{u}_0\\\blam_\upH\end{bmatrix}\nonumber
+\begin{bmatrix}
\mat{B}_{T_\infty} & \mat{B}_{\partial}(\vect{z}^\mathcal{E})\\\vect{0}&\vect{0}\\\vect{0}&\vect{0}\end{bmatrix}\begin{bmatrix}
\vect{u}_{T_\infty}\\\vect{u}_{\partial}\end{bmatrix},\\
\begin{bmatrix}\vect{y}_{T_\infty}\\\vect{y}_{\partial}\end{bmatrix}&=\begin{bmatrix}
\mat{B}_{T_\infty} & \mat{B}_{\partial}(\vect{z}^\mathcal{E})\\\vect{0}&\vect{0}\\\vect{0}&\vect{0}\end{bmatrix}^T\begin{bmatrix}
\vect{\eff}^{\mathcal{E}}(\vect{z}^{\mathcal{E}})\\\vect{u}_0\\\blam_\upH\end{bmatrix}\nonumber.
\end{align*}
\end{coordinate}
\noindent Given a graph $\mathcal{G}=(\mathcal{N},\mathcal{E},L)$ with $\mathcal{N}=\{\nu_1,\dots,\nu_l\}$, $l\in\mathbb{N}$,  $\mathcal{E}=\{\omega_1,\dots,\omega_k\}\subset\mathcal{N}\times\mathcal{N}$ and \linebreak $L=\{L^{\omega_1},\dots,L^{\omega_k}\}$, the model on each pipe $\omega_i=(\nu,\bar{\nu})$, $i=1,\cdots,k$, $\nu,\bar{\nu}\in\mathcal{N}$, takes the following form,
\begin{align}\label{eq:CoRepPipe}
\begin{split}
\mat{E}^{\omega_i}(\vect{z}^{\omega_i})\dot{\vect{z}}^{\omega_i}&=((\mat{J}^{\omega_i}-\mat{R}^{\omega_i})(\vect{z}^{\omega_i}))\vect{\eff}^{\omega_i}(\vect{z}^{\omega_i})+\mat{B}^{\omega_i}(\vect{z}^{\omega_i},\vect{\Bflo}^{\omega_i})\vect{u}^{\omega_i},\\
\vect{y}^{\omega_i}&=\mat{B}^{\omega_i}(\vect{z}^{\omega_i},\vect{\Bflo}^{\omega_i})^T\vect{\eff}^{\omega_i}(\vect{z}^{\omega_i}),
\end{split}
\end{align}
with
\begin{align}
\label{eq:CoRepPipeMat}
&\vect{z}^{\omega_i}\coloneqq\begin{bmatrix}
\brho^{\omega_i}\\\bm^{\omega_i}\\\be^{\omega_i}\\\blam_\upm^{\omega_i}\\\lambda_\upe^{\omega_i}\end{bmatrix},
\vect{\eff}^{\omega_i}(\vect{z}^{\omega_i})\coloneqq\begin{bmatrix}
\boldsymbol{\varepsilon}^{\omega_i}\\\bm^{\omega_i}\\\be^{\omega_i}\\\blam_\upm^{\omega_i}\\\lambda_\upe^{\omega_i}\end{bmatrix},
\vect{u}^{\omega_i}\coloneqq\begin{bmatrix}
T_\infty^{\omega_i}\\\Bflo^{\omega_i}|_{\bar{\nu}}\\\Bflo^{\omega_i}|_{\nu}
\end{bmatrix},
\mat{E}^{\omega_i}(\vect{z}^{\omega_i})\coloneqq\begin{bmatrix}
\mat{M}^{\omega_i}_\uprho&\sz&\sz&\sz&\vect{0}\\\mat{M}^{\omega_i}_{\upm,\uprho} &\mat{M}_{\upm,\upm}^{\omega_i}&\sz&\sz&\vect{0}\\\sz&\sz&\mat{M}_\upe^{\omega_i}&\sz&\vect{0}\\\sz&\sz&\sz&\sz&\vect{0}\\\vect{0}&\vect{0}&\vect{0}&\vect{0}&0
\end{bmatrix},\\
&(\mat{J}^{\omega_i}-\mat{R}^{\omega_i})(\vect{z}^{\omega_i})\coloneqq\begin{bmatrix}
\sz&\mat{J}^{\omega_i}_{\uprho,\upm}&\sz&\sz&\vect{0}\\\mat{J}^{\omega_i}_{\upm,\uprho}&\sz&\mat{J}^{\omega_i}_{\upm,\upe}+\tilde{\mat{J}}^{\omega_i}_{\upm,\upe}&\mat{T}^{\omega_i}_\upm&\vect{0}\\\sz&\mat{J}^{\omega_i}_{\upe,\upm}+\tilde{\mat{J}}^{\omega_i}_{\upe,\upm}&-\mat{R}^{\omega_i}_{\upe,\upe}&\sz&\vect{t}^{\omega_i}_\upe\\
\sz&-(\mat{T}^{\omega_i}_\upm)^T&\sz&\sz&\vect{0}\\\vect{0}&\vect{0}&-(\vect{t}_\upe^{\omega_i})^T&\vect{0}&0\end{bmatrix},
\mat{B}^{\omega_i}(\vect{z}^{\omega_i},\vect{\Bflo}^{\omega_i})\coloneqq \begin{bmatrix}
\vect{0}&\vect{0}&\vect{0}\\\vect{0}&\vect{0}&\vect{0}\\\vect{b}^{\omega_i}&\vect{0}&\vect{b}^{\omega_i}_\upe\\0&1&0\\0&0&1\\0&0&\frac{e^{\omega_i}|_{\nu}}{\Bflo^{\omega_i}|_{\nu}}
\end{bmatrix},\nonumber
\end{align}
\noindent The state-dependent matrices, boundary-operators and $\boldsymbol{\varepsilon}^{\omega_i}$  are given as in \eqref{eq:FEMSD}. The following coupling procedure is inspired by ideas from \cite{DomHLMMT21}. With respect to the $\spa{L}^2(\mathcal{E})$ inner-product, the matrices and tensors of the same kind are collected in a block-diagonal manner. Therefore, we have for example,
\begin{align*}
\mat{M}_\uprho^\mathcal{E}=\begin{bmatrix}\mat{M}_\uprho^{\omega_1}&\sz&\cdots&\sz\\
\sz&\mat{M}_\uprho^{\omega_2}&\sz&\vdots\\\vdots&&\ddots&\sz\\\sz&\cdots&\sz&\mat{M}_\uprho^{\omega_k}\end{bmatrix}.
\end{align*}
Thus, the state and effort vectors are created by stacking the respective pipe-vectors on top of each other. Doing this leads to a big system of the form of \eqref{eq:CoRepPipe} with \eqref{eq:CoRepPipeMat}, where the superscript $\omega_i$ is exchanged for $\mathcal{E}$. However, the created system still lacks the coupling conditions, which need to be incorporated next. As $(\upS_{out})$ can be transformed into a condition for $e^{\omega_i}|_\nu$, $\nu\in~\mathcal{N}_0$, if $\omega_i=(\nu,\bar{\nu})$, it is simply plugged into $\mat{B}^{\omega_i}(\vect{z}^{\omega_i},\vect{\Bflo}^{\omega_i})$. For implementing $(\upM)$ and $(\upH)$, i.e., \eqref{Coup:Mass} and \eqref{Coup:Enthalpy}, we rearrange the network input $\vect{u}^\mathcal{E}$, such that we have
$\vect{u}^\mathcal{E}=[\vect{u}_{T_\infty}^T\quad\vect{u}_0^T\quad\vect{u}_\partial^T]^T$. Here, $\vect{u}_{T_\infty}$ collects the ambient temperatures of each pipe, $\vect{u}_0$ the boundary flow variables at all coupling nodes $\nu_0\in\mathcal{N}_0$ and $\vect{u}_\partial$ the boundary flow variables at all boundary nodes $\nu_\partial\in\mathcal{N}_\partial$. Accordingly, we rearrange $\mat{B}^\mathcal{E}$ into $\mat{B}^\mathcal{E}=[\mat{B}_{T_\infty}\quad\mat{B}_0\quad\mat{B}_\partial]$. 
We can then express the mass conservation with the help of Definition \ref{Def:CoupMat} as $\mat{C}\vect{u}_0=\vect{0}$, as the multiplication of each row of $\mat{C}$ with $\vect{u}_0$, i.e., $\mat{C}_{i,:}\vect{u}_0$, sums up the boundary flow variables at the coupling node $\nu_i$, $i=1,\dots, |\mathcal{N}_0|$.
For $(\upH)$, i.e., the enthalpy equality at the coupling nodes, we need to make use of the boundary effort variables at the interior nodes, as shown in \eqref{Coup:Enthalpy}. From Section \ref{Sec:Ports} we know that these are given by the output equation. For the network output we have,
\begin{align*}
\vect{y}^\mathcal{E}=\begin{bmatrix}\vect{y}_{T_\infty}^T&\vect{y}^T_0&\vect{y}^T_\partial\end{bmatrix}^T=\begin{bmatrix}\mat{B}_{T_\infty}&\mat{B}_0&\mat{B}_\partial\end{bmatrix}^T\vect{\eff}^{\mathcal{E}}(\vect{z}^\mathcal{E}).
\end{align*}
Extracting the output equation for the interior nodes yields $\vect{y}_0=\mat{B}_0^T\vect{\eff}^{\mathcal{E}}(\vect{z}^\mathcal{E})$, which gives us the enthalpy at each interior node $\nu\in\mathcal{N}_0$. We collect the Lagrange multipliers from \eqref{Coup:Enthalpy} into a vector $\blam_\upH\in\mathbb{R}^{|\mathcal{N}_0|}$. To make sure that we have enthalpy equality at every coupling node, we set $\vect{y}_0=\mat{C}^T\blam_\upH$, i.e., we have that $\mat{C}^T\blam_\upH=\mat{B}_0^T\vect{\eff}^{\mathcal{E}}(\vect{z}^\mathcal{E})$. So, mass conservation and enthalpy equality on the interior nodes are given by 
\begin{align*}%\label{eq:CoupCondCoRep}
\mat{C}\vect{u}_0=\vect{0},\quad \text{and}\quad \mat{C}^T\blam_\upH-\mat{B}_{0}^T\vect{\eff}^{\mathcal{E}}(\vect{z}^\mathcal{E})=\vect{0}.
\end{align*}
Adding these equations to the block-diagonally ordered system, yields the coordinate system for the coupled network system. By construction, the system is still port-Hamiltonian. The system matrix on the right side is skew-symmetric disregarding the symmetric positive semi-definite matrix $\mat{R}(\vect{z}^\mathcal{E})$. Furthermore, the condition $(\mat{E}^\mathcal{E}(\vect{z}^\mathcal{E}))^T\vect{\eff}^{\mathcal{E}}(\vect{z}^\mathcal{E})=\nabla_{\vect{z}^\mathcal{E}}\Ham^\mathcal{E}(\vect{z}^\mathcal{E})$ is fulfilled, see \cite{Hau24_Dis}. The Coordinate Representation \ref{eq:coupledNetwork} is a port-Hamiltonian differential-algebraic equation of differentiation index 2, see \cite{Hau24_Dis} for the proof.

\section{Derivations and Proofs}\label{Appendix2}
\subsection{Derivation of Boundary Port Variables \eqref{eq:BoundaryPortM}}
For the port-Hamiltonian formulation of System \ref{Sys:pHDCv} the boundary effort and flow variables can be stated in the following way
\begin{align*}
\vect{\Bflo}=[\BfloL\quad \BfloZ]^T, \quad  \vect{\Beff}=[\BeffL\quad \BeffZ]^T
\end{align*}
with
\begin{align*}
\begin{bmatrix}\BfloL\\\BeffL\end{bmatrix}=\begin{bmatrix}0 & -1 &0\\ 1&0& \frac{e+p}{\rho}|_L\end{bmatrix}\left.\begin{bmatrix}
\frac{v^2}{2}\\\rho v\\1\end{bmatrix}\right|_L, \quad \begin{bmatrix}\BfloZ\\\BeffZ\end{bmatrix}=\begin{bmatrix}0 &1 &0\\ 1&0& \frac{e+p}{\rho}|_0\end{bmatrix}\left.\begin{bmatrix}
\frac{v^2}{2}\\\rho v\\1\end{bmatrix}\right|_0.
\end{align*}
The papers \cite{LeGZM05} and \cite{Vil17_Dis} deduce the boundary port variables elegantly from the skew-adjoint operator $J$ by building a matrix $Q$ from which they derive different parametrizations of $\vect{\Beff}$ and $\vect{\Bflo}$. The first paper only considers non-singular state-independent $Q$ matrices, but \cite{Vil17_Dis} expands this approach to singular systems, which are still state-independent. We adapt the ansatz from \cite{Vil17_Dis} and, as our systems are highly state-dependent, we tackle this difficulty. In the following, we focus on the operator $J(z)$, defined in System \ref{Sys:pHDCv}, as this operator contains the differential terms, which are responsible for the boundary terms. Let $\mathbf{e}_1,\mathbf{e}_2\in\spa{H}^1(\omega)^3$. Imitating the steps in \cite{Vil17_Dis}, we get that,
\begin{align*}
(\mathbf{e}_1,J(z)\mathbf{e}_2)+( J(z)\mathbf{e}_1,\mathbf{e}_2)
&=\left.\left[\mathbf{e}_1^T\underbrace{\begin{bmatrix}0 &-1&0\\-1&0&-\frac{e+p}{\rho}\\ 0 & -\frac{e+p}{\rho}&0\end{bmatrix}}_{Q(z)}\mathbf{e}_2\right]\right|_0^L=\begin{bmatrix}\mathbf{e}_1|_L\\\mathbf{e}_1|_0\end{bmatrix}^T\begin{bmatrix}Q(z)|_L&0\\0&-Q(z)|_0\end{bmatrix}\begin{bmatrix}
\mathbf{e}_2|_L\\\mathbf{e}_2|_0\end{bmatrix}=(*).
\end{align*}
The aim of \cite{Vil17_Dis} is to find a splitting, such that the bilinear form above is no longer dependent on the state in the boundary terms, i.e., the state-dependency is shifted into the boundary flow and effort variables, and can be written as,
\begin{align*}
(*)&=\left(\begin{bmatrix}M_Q|_L&0\\0&M_Q|_0\end{bmatrix}\begin{bmatrix}\mathbf{e}_1|_L\\\mathbf{e}_1|_0\end{bmatrix}\right)^T\begin{bmatrix}
\tilde{Q}(z)|_L&0\\0&-\tilde{Q}(z)|_0\end{bmatrix}\begin{bmatrix}M_Q|_L&0\\0&M_Q|_0\end{bmatrix}\begin{bmatrix}
\mathbf{e}_2|_L\\\mathbf{e}_2|_0\end{bmatrix}\stackrel{!}{=}\begin{bmatrix}f_{\partial,\mathbf{e}_1}\\e_{\partial,\mathbf{e}_1}\end{bmatrix}^T\boldsymbol{\Sigma}\begin{bmatrix}f_{\partial,\mathbf{e}_2}\\e_{\partial,\mathbf{e}_2}\end{bmatrix}.
\end{align*}
with $\boldsymbol{\Sigma}=\begin{bmatrix} \sz & \so\\\so &\sz\end{bmatrix}$. Here, $\so$ denotes the identity matrix and $\sz$ the zero matrix of suitable dimension.
The matrices $M_Q$ and $\tilde{Q}$ need to be deduced from $Q(z)$, i.e.,
\begin{align*}%\label{eq:Q}
Q(z)=\begin{bmatrix}0 &-1&0\\-1&0&-\frac{e+p}{\rho}\\ 0 & -\frac{e+p}{\rho}&0\end{bmatrix}
\end{align*}
and the boundary flow and efforts are defined as, see \cite{Vil17_Dis},
\begin{align*}
\begin{bmatrix}f_{\partial,\mathbf{e}}\\e_{\partial,\mathbf{e}}\end{bmatrix}=R_{ext}\begin{bmatrix}M_Q|_L&0\\0&M_Q|_0\end{bmatrix}\begin{bmatrix}
\SeffL\\\SeffZ\end{bmatrix},
\end{align*}
i.e., they are deduced from the storage effort variables evaluated at the boundary. $R_{ext}$ has to fulfill,
\begin{align*}
R_{ext}^T\boldsymbol{\Sigma} R_{ext}=\begin{bmatrix}\tilde{Q}(z)|_L&0\\0&-\tilde{Q}(z)|_0
\end{bmatrix}.
\end{align*}
To compute the matrix $M_Q$, we need to choose a basis for $\operatorname{image}(Q(z))$, e.g, $$\operatorname{image}(Q(z))=\mathrm{span}\left\{ \begin{bmatrix}
0\\1\\0\end{bmatrix}, \begin{bmatrix}1\\0\\\frac{e+p}{\rho}\end{bmatrix}\right\}.$$
\noindent This yields the matrices $M$, $\tilde{Q}$ and $M_Q$,
\begin{align*}
M&=\begin{bmatrix}0&1\\1&0\\0&\frac{e+p}{\rho}\end{bmatrix},\quad \tilde{Q}=M^TQM=\begin{bmatrix}
0 & -\frac{\rho^2+(e+p)^2}{\rho^2}\\ -\frac{\rho^2+(e+p)^2}{\rho^2}&0\end{bmatrix},\\
M_Q&=(M^TM)^{-1}M^T=\begin{bmatrix}
0&1&0\\\frac{\rho^2}{\rho^2+(e+p)^2}&0&\frac{(e+p)\rho}{\rho^2+(e+p)^2}
\end{bmatrix}.
\end{align*}
Of course, these matrices are all state-dependent, but in favor of a shorter notation we suppress the dependency on $z$. We want to create the boundary port variables \eqref{eq:BoundaryPort}. For this we need that
\begin{align*}%\label{eq:Rext}
\begin{bmatrix}f_{\partial,\mathbf{e}}\\e_{\partial,\mathbf{e}}
\end{bmatrix}\stackrel{!}{=}\begin{bmatrix}\BfloL\\\BfloZ\\\BeffL\\\BeffZ\end{bmatrix}=\underbrace{\begin{bmatrix}
0&-1&0&0&0&0\\0&0&0&0&1&0\\1&0&\left.\frac{e+p}{\rho}\right|_L&0&0&0\\0&0&0&1&0&\left.\frac{e+p}{\rho}\right|_0\end{bmatrix}}_{=R_{ext}\begin{bmatrix}M_Q|_L&0\\0&M_Q|_0\end{bmatrix}}\begin{bmatrix}
\SeffaL\\\SeffbL\\\SeffcL\\\SeffaZ\\\SeffbZ\\\SeffcZ
\end{bmatrix},
\end{align*}
Thus, the matrix $R_{ext}$ is given as,
\begin{align*}
R_{ext}=\begin{bmatrix}
-1 & 0&0&0\\0&0&1&0\\0&\left.\frac{\rho^2+(e+p)^2}{\rho^2}\right|_L&0&0\\0&0&0&\left.\frac{\rho^2+(e+p)^2}{\rho^2}\right|_0
\end{bmatrix}.
\end{align*}
This matrix also fulfills
\begin{align*}
R_{ext}^T\boldsymbol{\Sigma} R_{ext}&=\begin{bmatrix}
-1 & 0&0&0\\0&0& \left.\frac{\rho^2+(e+p)^2}{\rho^2}\right|_L    &0\\0&1&0&0\\0&0&0&\left.\frac{\rho^2+(e+p)^2}{\rho^2}\right|_0
\end{bmatrix}\begin{bmatrix}0&0&1&0\\0&0&0&1\\1&0&0&0\\0&1&0&0\end{bmatrix}\begin{bmatrix}
-1 & 0&0&0\\0&0&1&0\\0&\left.\frac{\rho^2+(e+p)^2}{\rho^2}\right|_L&0&0\\0&0&0&\left.\frac{\rho^2+(e+p)^2}{\rho^2}\right|_0
\end{bmatrix}\\
&=\begin{bmatrix}
0&-\left.\frac{\rho^2+(e+p)^2}{\rho^2}\right|_L&0&0\\-\left.\frac{\rho^2+(e+p)^2}{\rho^2}\right|_L&0&0&0\\0&0&0&\left.\frac{\rho^2+(e+p)^2}{\rho^2}\right|_0\\0&0&\left.\frac{\rho^2+(e+p)^2}{\rho^2}\right|_0&0\end{bmatrix}=\begin{bmatrix}\tilde{Q}(z)|_L&0\\0&-\tilde{Q}(z)|_0
\end{bmatrix}.
\end{align*}
Thus, the boundary ports \eqref{eq:BoundaryPort} are in line with \cite{LeGZM05} and \cite{Vil17_Dis}, considering the additional state-dependency. 

\subsection{Proof of Theorem \ref{Theo:DiracpHDC}}
Let $\spa{F}=\spa{H}^1(\omega)^3\times\spa{L}^2(\omega)^3\times\spa{L}^2(\omega) \times\mathbb{R}^2$ and $\spa{E}=\spa{H}^1(\omega)^3\times\spa{L}^2(\omega)^3\times\spa{L}^2(\omega)\times\mathbb{R}^2$. Then the underlying Stokes-Dirac structure is given by the linear subset $\spa{D}\subset\spa{F}\times\spa{E}$, 
\begin{align*}
\begin{split}
\spa{D}=&\Bigl\{[[f_\omega,\vect{\Bflo}],[e_\omega,\vect{\Beff}]]\in\spa{F}\times\spa{E}|\,
[p\Seffc,\,\frac{e}{\rho}\Seffb,\,\frac{1}{\rho}\Seffb]\in\spa{H}^1(\omega)^3\\
&\quad\begin{bmatrix}\Sflo\\\Rflo\\\Pflo\end{bmatrix}+\begin{bmatrix}
J(z)&\so&B\\-\so&\sz&\vect{0}\\-B^T&\vect{0}&0
\end{bmatrix}\begin{bmatrix}\Seff\\\Reff\\\Peff\end{bmatrix}=\begin{bmatrix}
\vect{0}\\\vect{0}\\0\end{bmatrix},\\
&\quad\begin{bmatrix}\BfloL\\\BeffL\end{bmatrix}=\begin{bmatrix}0 & -1 &0\\ 1&0& \frac{e+p}{\rho}|_L\end{bmatrix}\SeffL,\quad  \begin{bmatrix}\BfloZ\\\BeffZ\end{bmatrix}=\begin{bmatrix}0 &1 &0\\ 1&0& \frac{e+p}{\rho}|_0\end{bmatrix}\SeffZ\Bigr\}.
\end{split}
\end{align*}
Furthermore, the system of equations
\begin{align*}
\begin{split}
\Sflo&=-E(z)\partial_tz, \quad \Seff=\frac{\delta\Ham}{\delta z}(z),\quad \Reff=-R(z)\Rflo, \quad \Peff=T_\infty,\\&\quad [[f_\omega,\vect{\Bflo}],[e_\omega,\vect{\Beff}]]\in\spa{D},
\end{split}
\end{align*}
is equivalent to the original System \ref{Sys:pHDCv}, and 
$\langle[f_\omega,\vect{\Bflo}],[e_\omega,\vect{\Beff}]\rangle=0$ represents the power-balance equation.
\begin{proof}
\noindent To prove that \eqref{eq:StokesDirac} is a Stokes-Dirac structure as defined in Definition \ref{def:StokesDirac}, we have to show that $\spa{D}=\spa{D}^\perp$. We do this analogously to \cite{BanSAZISW,BanZISW21}, by showing that (i) $\spa{D}\subset\spa{D}^\perp$ and (ii) $\spa{D}^\perp\subset\spa{D}$ using the bilinear form \eqref{eq:Bilin}. For part (i) of the proof, we begin with considering two pairs of flow and effort variables belonging to the Dirac structure, i.e., $[[f_\omega,\vect{\Bflo}],[e_\omega,\vect{\Beff}]]\in\spa{D}$ and $[[\tilde{f}_\omega,\vect{\tBflo}],[\tilde{e}_\omega,\vect{\tBeff}]]\in\spa{D}$. Using \eqref{eq:PowerProduct} the bilinear form \eqref{eq:Bilin} yields,
\begin{align*}%\label{eq:ProofSDBL}
&\langle\langle[f_\omega,\vect{\Bflo},e_\omega,\vect{\Beff}],[\tilde{f}_\omega,\vect{\tBflo},\tilde{e}_\omega,\vect{\tBeff}]\rangle\rangle=\langle [f_\omega,\vect{\Bflo}],[\tilde{e}_\omega,\vect{\tBeff}]\rangle+\langle [e_\omega,\vect{\Beff}],[\tilde{f}_\omega,\vect{\tBflo}]\rangle\\
&\quad=\int_\omega \Sflo\cdot\tSeff+\Rflo\cdot\tReff+\Pflo\cdot\tPeff\D x +\vect{\Bflo}\cdot\vect{\tBeff} +\int_\omega \Seff\cdot\tSflo+\Reff\cdot\tRflo+\Peff\cdot\tPflo\D x +\vect{\Beff}\cdot\vect{\tBflo}=(*).
\end{align*}
Using \eqref{eq:StokesDirac}, we get
\begin{align*}
(*)=&\int_\omega -(J(z)\Seff)^T\tSeff-(B\Peff)^T\tSeff-(\so\Reff)^T\tSeff+(\so\Seff)^T\tReff
+(B^T\Seff)^T\tPeff\D x+\vect{\Bflo}\cdot\vect{\tBeff}\\
&\quad + \int_\omega -\Seff^TJ(z)\tSeff-\Seff^TB\tPeff-\Seff^T\so\tReff+\Reff
^T\so\tSeff+\Peff^TB^T\tSeff\D x+\vect{\Beff}\cdot\vect{\tBflo}\\
&=\int_\omega -\Seff^TJ(z)^T\tSeff\D x +\vect{\Bflo}\cdot\vect{\tBeff} +\int_\omega -\Seff^TJ(z)\tSeff\D x+\vect{\Beff}\cdot\vect{\tBflo}.
\end{align*}
Be aware that to show that the above is equal to zero, Corollary \ref{Cor:JRProp} cannot be applied, since the boundary terms do not vanish here. Thus, plugging in the definition of the system operator $J(z)$ from System \ref{Sys:pHDCv} and performing integration by parts in some terms shows that 
\begin{align*}
\langle\langle[f_\omega,\vect{\Bflo},e_\omega,\vect{\Beff}],[\tilde{f}_\omega,\vect{\tBflo},\tilde{e}_\omega,\vect{\tBeff}]\rangle\rangle=0,
\end{align*}
i.e., we utilize the skew-adjointness of $J(z)$ in the $\spa{L}^2$ inner-product with respect to the boundary terms, and hence we have that $\spa{D}\subset\spa{D}^\perp$, which concludes part (i) of the proof.\\
For the second part (ii) of the proof, we choose $[[\tilde{f}_\omega,\vect{\tBflo}],[\tilde{e}_\omega,\vect{\tBeff}]]\in\spa{D}^\perp$ arbitrarily, such that 
\begin{align}\label{eq:ProofSD1}
\langle\langle[f_\omega,\vect{\Bflo},e_\omega,\vect{\Beff}],[\tilde{f}_\omega,\vect{\tBflo},\tilde{e}_\omega,\vect{\tBeff}]\rangle\rangle=0,
\end{align} 
for all $[[f_\omega,\vect{\Bflo}],[e_\omega,\vect{\Beff}]]\in\spa{D}$. We show that $[[\tilde{f}_\omega,\vect{\tBflo}],[\tilde{e}_\omega,\vect{\tBeff}]]\in\spa{D}^\perp$ fulfills the definitions of the flow and effort variables in \eqref{eq:StokesDirac} and thus, $[[\tilde{f}_\omega,\vect{\tBflo}],[\tilde{e}_\omega,\vect{\tBeff}]]\in\spa{D}$. For this, we take the freedom to choose the effort variables $e_\omega$ for $[[f_\omega,\vect{\Bflo}],[e_\omega,\vect{\Beff}]]\in\spa{D}$. Since for an arbitrary choice of $[[\tilde{f}_\omega,\vect{\tBflo}],[\tilde{e}_\omega,\vect{\tBeff}]]\in\spa{D}^\perp$, \eqref{eq:ProofSD1} is fulfilled for all $[[f_\omega,\vect{\Bflo}],[e_\omega,\vect{\Beff}]]\in\spa{D}$, we can make a certain choice for $[[f_\omega,\vect{\Bflo}],[e_\omega,\vect{\Beff}]]\in\spa{D}$ and \eqref{eq:ProofSD1} is still fulfilled. As $[[f_\omega,\vect{\Bflo}],[e_\omega,\vect{\Beff}]]\in\spa{D}$ the flow variable $f_\omega$ and the boundary flow and effort variables $\vect{\Bflo}$ and $\vect{\Beff}$ are defined through \eqref{eq:StokesDirac} and can be inserted into the bilinear form, i.e.,
\begin{align*}
(*)&=\int_\omega -(J(z)\Seff)^T\tSeff-(B\Peff)^T\tSeff-(\so\Reff)^T\tSeff+(\so\Seff)^T\tReff
+(B^T\Seff)^T\tPeff\D x+\vect{\Bflo}\cdot\vect{\tBeff}\\
&\quad +\int_\omega \Seff\cdot\tSflo+\Reff\cdot\tRflo+\Peff\cdot\tPflo\D x +\vect{\Beff}\cdot\vect{\tBflo}.
\end{align*}
\noindent In the following we use that $\Seff=[\Seffa\quad\Seffb\quad\Seffc]^T$. This splitting can be analogously done for the $\tilde{}$ and all other three dimensional quantities. Using the definition of the $J(z)$ and $B$ operators from System \ref{Sys:pHDCv} in the above equation we get,
\begin{align*}\label{eq:ProofSD2}
(*)&=\int_\omega \tSeffb\partial_x\Seffa+\tSeffa\partial_x\Seffb+\tSeffc\left(-\frac{\lambda}{2d}v|v|\Seffb+\partial_x(\frac{e}{\rho}\Seffb)+p\partial_x(\frac{1}{\rho}\Seffb)\right)\\
&\quad +\tSeffb\left(\frac{\lambda}{2d}v|v|\Seffc+\frac{e}{\rho}\partial_x\Seffc+\frac{1}{\rho}\partial_x(p\Seffc)\right)-\frac{k_\omega}{d}\Peff\tSeffc-(\so_l\Reff)^T\tSeff\\
&\quad+(\so_l\Seff)^T\tReff
+\frac{k_\omega}{d}\Seffc\tPeff\D x+\vect{\Bflo}\cdot\vect{\tBeff} +\int_\omega \Seff\cdot\tSflo+\Reff\cdot\tRflo+\Peff\cdot\tPflo\D x +\vect{\Beff}\cdot\vect{\tBflo}.
\end{align*}
The proof now consists of the following nine similar steps and uses partial integration.
\begin{enumerate}
\item  Let $[[f_\omega,\vect{\Bflo}],[e_\omega,\vect{\Beff}]]\in\spa{D}$ with $\Seffb=\Seffc=\Peff=0$, $\Reff=\vect{0}$ and $\SeffaZ=\SeffaL=0$. Plugging this into the bilinear form, we are left with
\begin{align*}
0=\int_\omega \tSeffb\partial_x\Seffa +\Seffa\tReffa\D x +\int_\omega \Seffa \tSfloa\D x=\int_\omega -\Seffa\partial_x\tSeffb+\Seffa\tReffa\D x +\int_\omega \Seffa \tSfloa\D x
\end{align*}
and thus, by using partial integration in the first term we have that $\tSfloa=\partial_x\tSeffb-~\tReffa$ with $\tSeffb\in\spa{H}^1(\omega)$. 
\item Let $[[f_\omega,\vect{\Bflo}],[e_\omega,\vect{\Beff}]]\in\spa{D}$ with $\Seffa=\Seffc=\Peff=0$, $\Reff=\vect{0}$ and $\SeffbZ=\SeffbL=0$. Plugging this into the bilinear form  we are left with
\begin{align*}
0&=\int_\omega\tSeffa\partial_x\Seffb + \tSeffc (-\frac{\lambda}{2d}v|v|\Seffb+\partial_x(\frac{e}{\rho}\Seffb)+p\partial_x(\frac{1}{\rho}\Seffb))+\Seffb\tReffb\D x+\int_\omega \Seffb\tSflob \D x.
\end{align*}
By using partial integration in the terms where $\Seffb$ is in a partial derivative we get that
$\tSflob=\partial_x\tSeffa+\frac{\lambda}{2d}v|v|\tSeffc+\frac{e}{\rho}\partial_x\tSeffc+\frac{1}{\rho}\partial_x(p\tSeffc)-\tReffb$ with $\tSeffa,\,\tSeffc,\,p\tSeffc\in\spa{H}^1(\omega)$.
\item Let $[[f_\omega,\vect{\Bflo}],[e_\omega,\vect{\Beff}]]\in\spa{D}$ with $\Seffa=\Seffb=\Peff=0$, $\Reff=\vect{0}$ and $\SeffcZ=\SeffcL=0$. Plugging this into the bilinear form  we are left with
\begin{align*}
0&=\int_\omega \tSeffb (\frac{\lambda}{2d}v|v|\Seffc+\frac{e}{\rho}\partial_x\Seffc+\frac{1}{\rho}\partial_x(p\Seffc))+\Seffc\tReffc+\Seffc\frac{k_\omega}{d}\tPeff\D x+\int_\omega \Seffc\tSfloc\D x.
\end{align*}
By using partial integration in the terms where $\Seffc$ is in a partial derivative we get that
$\tSfloc=-\frac{\lambda}{2d}v|v|\tSeffb+\partial_x(\frac{e}{\rho}\tSeffb)+p\partial_x(\frac{1}{\rho}\tSeffb)-\tReffc-\frac{k_\omega}{d}\tPeff$ with $\frac{e}{\rho}\tSeffb,\,\frac{1}{\rho}\tSeffb\in\spa{H}^1(\omega)$.
\item Let $[[f_\omega,\vect{\Bflo}],[e_\omega,\vect{\Beff}]]\in\spa{D}$ with $\Seff=\vect{0}$, $\Peff=0$. Plugging this into the bilinear form  we are left with
\begin{align*}
0&=\int_\omega \Reff\cdot\tRflo\D x+\int_\omega -(\so\Reff)^T\tSeff\D x,
\end{align*}
i.e., we get $\tRflo=\so\tSeff$.
\item Let $[[f_\omega,\vect{\Bflo}],[e_\omega,\vect{\Beff}]]\in\spa{D}$ with $\Seff=\Reff=\vect{0}$. Plugging this into the bilinear form  we are left with
\begin{align*}
0=\int_\omega \Peff\cdot\tPflo\D x+\int_\omega -\frac{k_\omega}{d}\Peff\tSeffc\D x
\end{align*}
and therefore, it is $\tPflo=\frac{k_\omega}{d}\tSeffc=B^T\tSeff$.
\item Let $[[f_\omega,\vect{\Bflo}],[e_\omega,\vect{\Beff}]]\in\spa{D}$ with $\Seffb=\Seffc=\Peff=0$, $\Reff=\vect{0}$ and $\SeffaZ=0$. Plugging this into the bilinear form and using step 1.  we are left with
\begin{align*}
0&=\int_\omega \tSeffb\partial_x\Seffa \D x +\int_\omega \Seffa\partial_x\tSeffb\D x+\tBflo|_L\SeffaL=\tBflo|_L\SeffaL+\tSeffb|_L\SeffaL,
\end{align*}
and from this it follows that $\tBflo|_L=-\tSeffb|_L$.
\item Let $[[f_\omega,\vect{\Bflo}],[e_\omega,\vect{\Beff}]]\in\spa{D}$ with $\Seffb=\Seffc=\Peff=0$, $\Reff=\vect{0}$ and $\SeffaL=0$. Plugging this into the bilinear form and using step 1.  we are left with
\begin{align*}
0&=\int_\omega \tSeffb\partial_x\Seffa \D x +\int_\omega \Seffa\partial_x\tSeffb\D x+\tBflo|_0\SeffaZ=-\tBflo|_0\SeffaZ+\tSeffb|_0\SeffaZ,
\end{align*}
and from this it follows that $\tBflo|_0=\tSeffb|_0$.
\item Let $[[f_\omega,\vect{\Bflo}],[e_\omega,\vect{\Beff}]]\in\spa{D}$ with $\Seffb=\Seffc=\Peff=0$, $\Reff=\vect{0}$ and $\SeffaL=0$. Plugging this into the bilinear form and using step 2.  we are left with
\begin{align*}
0&=\int_\omega \tSeffa\partial_x\Seffb+\tSeffc(-\frac{\lambda}{2d}v|v|\Seffb+\partial_x(\frac{e}{\rho}\Seffb)+p\partial_x(\frac{1}{\rho}\Seffb))\\
&\quad +\Seffb(\partial_x\tSeffa+\frac{\lambda}{2d}v|v|\tSeffc+\frac{e}{\rho}\partial_x\tSeffc+\frac{1}{\rho}\partial_x(p\tSeffc))\D x+\BfloZ\tBeff|_0\\
&=-\SeffbZ(\tSeffa+\frac{e+p}{\rho}\tSeffc)|_0+\SeffbZ\tBeff|_0,
\end{align*}
i.e., $\tBeff|_0=\tSeffa|_0+\frac{e+p}{\rho}|_0\tSeffc|_0$.
\item Let $[[f_\omega,\vect{\Bflo}],[e_\omega,\vect{\Beff}]]\in\spa{D}$ with $\Seffb=\Seffc=\Peff=0$, $\Reff=\vect{0}$ and $\SeffaZ=0$. Plugging this into the bilinear form and using step 2.  we are left with
\begin{align*}
0&=\int_\omega \tSeffa\partial_x\Seffb+\tSeffc(-\frac{\lambda}{2d}v|v|\Seffb+\partial_x(\frac{e}{\rho}\Seffb)+p\partial_x(\frac{1}{\rho}\Seffb))\\
&\quad +\Seffb(\partial_x\tSeffa+\frac{\lambda}{2d}v|v|\tSeffc+\frac{e}{\rho}\partial_x\tSeffc+\frac{1}{\rho}\partial_x(p\tSeffc))\D x+\BfloL\tBeff|_L\\
&=\SeffbL(\tSeffa+\frac{e+p}{\rho}\tSeffc)|_L-\SeffbL\tBeff|_L,
\end{align*}
i.e., $\tBeff|_L=\tSeffa|_L+\frac{e+p}{\rho}|_L\tSeffc|_L$.
\end{enumerate}
This shows that $[[\tilde{f}_\omega,\vect{\tBflo}],[\tilde{e}_\omega,\vect{\tBeff}]]\in\spa{D}$ if $[[\tilde{f}_\omega,\vect{\tBflo}],[\tilde{e}_\omega,\vect{\tBeff}]]\in\spa{D}^\perp$ and therefore, $\spa{D}^\perp\subset\spa{D}$. This finally proves that the set $\spa{D}$ in \eqref{eq:StokesDirac} describes a Stokes-Dirac structure as defined in Definition \ref{def:StokesDirac}.\\
To end the proof of Theorem \ref{Theo:DiracpHDC} we need to show that \eqref{eq:StokesDirac} together with \eqref{eq:ports} is equivalent to the original system \ref{Sys:pHDCv}. For this, we need to plug \eqref{eq:ports} into 
\begin{align*}
\begin{bmatrix}\Sflo\\\Rflo\\\Pflo\end{bmatrix}+\begin{bmatrix}
J(z)&\so&B\\-\so&\sz&\vect{0}\\-B^T&\vect{0}&0
\end{bmatrix}\begin{bmatrix}\Seff\\\Reff\\\Peff\end{bmatrix}=\begin{bmatrix}
\vect{0}\\\vect{0}\\0\end{bmatrix}
\end{align*}
and rearrange the equations, i.e.,
\begin{align*}
-E(z)\partial_tz&=-J(z)\frac{\delta \Ham}{\delta z}(z)-BT_\infty+R(z)\Rflo,\\
\Rflo&=\frac{\delta \Ham}{\delta z}(z),\\
\Pflo&=B^T\frac{\delta \Ham}{\delta z}(z).
\end{align*}
Multiplying the first equation by $-1$ and inserting $\Rflo$ yields,
\begin{align*}
E(z)\partial_tz&=(J(z)-R(z))\frac{\delta \Ham}{\delta z}(z)+BT_\infty,\\
\Pflo&=B^T\frac{\delta \Ham}{\delta z}(z),
\end{align*}
which is equivalent to System \ref{Sys:pHDCv} with $\Pflo=y$, which acts as an output in the port-Hamiltonian framework.
\end{proof}

\section*{Statements and Declarations}
\subsection*{Competing interests}
The authors have no competing interests to declare that are relevant to the content of this article.
\subsection*{Data availability statement}
Data sets generated during the current study are available from the corresponding author on reasonable request.

\bibliographystyle{abbrv}
\bibliography{Bib}

\newcommand{\etalchar}[1]{$^{#1}$}
\begin{thebibliography}{GPBvdS12}

\bibitem[AH16]{AfkH16}
B.~Afkham and J.~Hesthaven.
\newblock Structure preserving model reduction of parametric hamiltonian
  systems.
\newblock {\em SIAM J. Sci. Comput.}, 39(6):A2616--A2644, 2016.

\bibitem[AH19]{AfkH19}
B.~Afkham and J.~Hesthaven.
\newblock Structure-preserving model-reduction of dissipative hamiltonian
  systems.
\newblock {\em J. Sci. Comput.}, 81:2--21, 2019.

\bibitem[AMU21]{AltMU21}
R.~Altmann, V.~Mehrmann, and B.~Unger.
\newblock Port-hamiltonian formulations of poroelastic network models.
\newblock {\em Math. Comput. Model. Dyn. Syst.}, 27(1):429--452, 2021.

\bibitem[AS17]{AltS17}
R.~Altmann and P.~Schulze.
\newblock A port-hamiltonian formulation of the navier–stokes equations for
  reactive flows.
\newblock {\em Syst. Control Lett.}, 100:51--55, 2017.

\bibitem[BGM22]{BeaGM22}
C.~Beattie, S.~Gugercin, and V.~Mehrmann.
\newblock {\em Structure-Preserving Interpolatory Model Reduction for
  Port-Hamiltonian Differential-Algebraic Systems}, pages 235--254.
\newblock Springer International Publishing, Cham, 2022.

\bibitem[BK{\etalchar{+}}]{Toolbox}
B.~Bader, T.~Kolda, et~al.
\newblock Tensor toolbox for matlab, version 3.6.

\bibitem[BLVC16]{BerLV16}
A.~Bermúdez, X.~López, and E.~Vázquez-Cendón.
\newblock Numerical solution of non-isothermal non-adiabatic flow of real gases
  in pipelines.
\newblock {\em J. Comput. Phys.}, 323:126--148, 2016.

\bibitem[BMBM18]{BadMBM18}
A.~Moses Badlyan, B.~Maschke, C.~Beattie, and V.~Mehrmann.
\newblock Open physical systems: {F}rom {GENERIC} to port-{H}amiltonian
  systems.
\newblock In {\em Proceedings of the 23rd International Symposium on
  Mathematical Theory of Systems and Networks}, pages 204--211, 2018.

\bibitem[BMXZ18]{BeaMXZ18}
C.~Beattie, V.~Mehrmann, H.~Xu, and H.~Zwart.
\newblock Linear port-{H}amiltonian descriptor systems.
\newblock {\em Math. Control. Signals Syst.}, 30:17, 2018.

\bibitem[BPBM21]{BruPM21}
A.~Brugnoli, V.~Pommier-Budinger, and D.~Matignon.
\newblock Port-hamiltonian flexible multibody dynamics.
\newblock {\em Multibody Syst. Dyn.}, 51, 2021.

\bibitem[BS21]{BreS21}
T.~Breiten and P.~Schulze.
\newblock Structure-preserving linear quadratic gaussian balanced truncation
  for port-hamiltonian descriptor systems, 2021.

\bibitem[BSA{\etalchar{+}}21]{BanSAZISW}
H.~Bansal, P.~Schulze, M.~Abbasi, H.~Zwart, L.~Iapichino, W.~Schilders, and
  N.~van~de Wouw.
\newblock Port-hamiltonian formulation of two-phase flow models.
\newblock {\em Syst. Control Lett.}, 149:104881, 2021.

\bibitem[BU22]{BreU22}
T.~Breiten and B.~Unger.
\newblock Passivity preserving model reduction via spectral factorization.
\newblock {\em Automatica}, 142:110368, 2022.

\bibitem[BWI{\etalchar{+}}20]{BanWISW20}
H.~Bansal, S.~Weiland, L.~Iapichino, W.~Schilders, and N.~van~de Wouw.
\newblock Structure-preserving spatial discretization of a two-fluid model.
\newblock In {\em 2020 59th IEEE Conference on Decision and Control}, pages
  5062--5067, 2020.

\bibitem[BZ18]{BadZ18}
A.~Moses Badlyan and C.~Zimmer.
\newblock Operator-generic formulation of thermodynamics of irreversible
  processes, 2018.

\bibitem[BZI{\etalchar{+}}21]{BanZISW21}
H.~Bansal, H.~Zwart, L.~Iapichino, W.~Schilders, and N.~{van de Wouw}.
\newblock Port-hamiltonian modelling of fluid dynamics models with variable
  cross-section.
\newblock {\em IFAC-PapersOnLine}, 54(9):365--372, 2021.

\bibitem[CBG16]{ChaBG16}
S.~Chaturantabut, C.~Beattie, and S.~Gugercin.
\newblock Structure-preserving model reduction for nonlinear port-hamiltonian
  systems.
\newblock {\em SIAM J. Sci. Comput.}, 38(5):B837--B865, 2016.

\bibitem[Cle22]{Cle22}
T.~Clees.
\newblock Contingency analysis for gas transport networks with hydrogen
  injection.
\newblock {\em Math. Meth. Oper. Res.}, 95:533--563, 2022.

\bibitem[CRML18]{CardML18}
F.~Cardoso-Ribeiro, D.~Matignon, and L.~Lefèvre.
\newblock A structure-preserving partitioned finite element method for the 2d
  wave equation.
\newblock {\em IFAC-PapersOnLine}, 51(3):119--124, 2018.

\bibitem[CS12]{ChaS12}
S.~Chaturantabut and D.~Sorensen.
\newblock A state space error estimate for pod-deim nonlinear model reduction.
\newblock {\em SIAM J. Numer. Anal.}, 50(1):46--63, 2012.

\bibitem[DHL{\etalchar{+}}21]{DomHLMMT21}
P.~Domschke, B.~Hiller, J.~Lang, V.~Mehrmann, R.~Morandin, and C.~Tischendorf.
\newblock Gas network modeling: An overview.
\newblock 2021.

\bibitem[Dob10]{Dob10}
M.~Dobrowolski.
\newblock {\em Angewandte Funktionalanalysis: Funktionalanalysis,
  Sobolev-R{\"a}ume und elliptische Differentialgleichungen}.
\newblock Springer Berlin Heidelberg, 2010.

\bibitem[Egg16]{Egg16}
H.~Egger.
\newblock A mixed variational discretization for non-isothermal compressible
  flow in pipelines, 2016.

\bibitem[EK18]{EggK18}
H.~Egger and T.~Kugler.
\newblock Damped wave systems on networks: Exponential stability and uniform
  approximations.
\newblock {\em Numer. Math.}, 138(4):839--867, 2018.

\bibitem[EKL{\etalchar{+}}18]{EggKLSMM18}
H.~Egger, T.~Kugler, B.~Liljegren{-}Sailer, N.~Marheineke, and V.~Mehrmann.
\newblock On structure-preserving model reduction for damped wave propagation
  in transport networks.
\newblock {\em {SIAM} J. Sci. Comput.}, 40(1):A331--A365, 2018.

\bibitem[FACC14]{FarACC14}
C.~Farhat, P.~Avery, T.~Chapman, and J.~Cortial.
\newblock Dimensional reduction of nonlinear finite element dynamic models with
  finite rotations and energy-based mesh sampling and weighting for
  computational efficiency.
\newblock {\em Int. J. Numer. Methods Eng.}, 98(9):625--662, 2014.

\bibitem[FGLSM23]{FroGLSM23}
A.~Frommer, M.~Günther, B.~Liljegren-Sailer, and N.~Marheineke.
\newblock Operator splitting for port-hamiltonian systems, 2023.
\newblock To appear in: Progress in Industrial Mathematics at ECMI 2023.

\bibitem[GHV21]{GraHV21}
C.~Gräßle, M.~Hinze, and S.~Volkwein.
\newblock {\em 2 Model Order Reduction by Proper Orthogonal Decomposition},
  pages 47--96.
\newblock De Gruyter, Berlin, Boston, 2021.

\bibitem[GO97]{OetG97}
M.~Grmela and H.~\"Ottinger.
\newblock Dynamics and thermodynamics of complex fluids. i. development of a
  general formalism.
\newblock {\em Phys. Rev. E}, 56:6620--6632, 1997.

\bibitem[GPBvdS12]{GugPBS12}
S.~Gugercin, R.~Polyuga, C.~Beattie, and A.~van~der Schaft.
\newblock Structure-preserving tangential interpolation for model reduction of
  port-hamiltonian systems.
\newblock {\em Automatica}, 48(9):1963--1974, 2012.

\bibitem[Hau24]{Hau24_Dis}
S.-A. Hauschild.
\newblock {\em Structure-Preserving Numerical Approximations for a
  Port-Hamiltonian Formulation of the Non-Isothermal Euler Equations}.
\newblock PhD thesis, Universit{\"a}t Trier, 2024.

\bibitem[HCF17]{HerCF17}
J.~Hernández, M.~Caicedo, and A.~Ferrer.
\newblock Dimensional hyper-reduction of nonlinear finite element models via
  empirical cubature.
\newblock {\em Comput. Methods Appl. Mech. Eng.}, 313:687--722, 2017.

\bibitem[Her08]{Her08}
M.~Herty.
\newblock Coupling conditions for networked systems of euler equations.
\newblock {\em SIAM J. Sci.Comput.}, 30(3):1596--1612, 2008.

\bibitem[HMM19]{HauMM19b}
S.-A. Hauschild, N.~Marheineke, and V.~Mehrmann.
\newblock Model reduction techniques for linear constant coefficient
  port-hamiltonian differential-algebraic systems.
\newblock {\em Control Cyber.}, 48(1), 2019.

\bibitem[HMM{\etalchar{+}}20]{HauMMMMBRS19}
S.-A. Hauschild, N.~Marheineke, V.~Mehrmann, J.~Mohring, A.~Moses~Badlyan,
  M.~Rein, and M.~Schmidt.
\newblock Port-hamiltonian modeling of district heating networks.
\newblock In T.~Reis, S.~Grundel, and S.~Sch{\"o}ps, editors, {\em Progress in
  Differential-Algebraic Equations II}, pages 333--355, Cham, 2020. Springer
  International Publishing.

\bibitem[HP20]{HesP20}
J.~Hesthaven and C.~Pagliantini.
\newblock Structure-preserving reduced basis methods for hamiltonian systems
  with a state-dependent poisson structure.
\newblock {\em Math. Comput.}, 90:1, 2020.

\bibitem[IA13]{IonA13}
T.~Ionescu and A.~Astolfi.
\newblock Moment matching for nonlinear port-hamiltonian and gradient systems.
\newblock {\em IFAC Proceedings Volumes}, 46(23):395--399, 2013.

\bibitem[IW13]{IliW13}
T.~Iliescu and Z.~Wang.
\newblock Are the snapshot difference quotients needed in the proper orthogonal
  decomposition?
\newblock {\em SIAM J. Sci. Comput.}, 36:A1221--A1250, 2013.

\bibitem[JEGJ22]{JaesEGJ22}
J.~Jäschke, M.~Ehrhardt, M.~Günther, and B.~Jacob.
\newblock A port-hamiltonian formulation of coupled heat transfer.
\newblock {\em Math. Comput. Model. Dyn. Syst.}, 28(1):78--94, 2022.

\bibitem[JZ12]{JacZ12}
B.~Jacob and H.~Zwart.
\newblock {\em Linear Port-Hamiltonian Systems on Infinite-Dimensional Spaces}.
\newblock Operator Theory: Advances and Applications. Springer Basel, 2012.

\bibitem[KH17]{KraH17}
M.~Kraus and E.~Hirvijoki.
\newblock Metriplectic integrators for the landau collision operator.
\newblock {\em Phys. Plasmas}, 24, 2017.

\bibitem[KKMS17]{KraKMS17}
M.~Kraus, K.~Kormann, P.~Morrison, and E.~Sonnendrücker.
\newblock Gempic: Geometric electromagnetic particle-in-cell methods.
\newblock {\em J. Plasma Phys.}, 83(4):905830401, 2017.

\bibitem[KL18]{KotL18}
P.~Kotyczka and L.~Lefèvre.
\newblock Discrete-time port-hamiltonian systems based on gauss-legendre
  collocation.
\newblock {\em IFAC-PapersOnLine}, 51(3):125--130, 2018.

\bibitem[KML18]{KotML18}
P.~Kotyczka, B.~Maschke, and L.~Lefèvre.
\newblock Weak form of stokes–dirac structures and geometric discretization
  of port-hamiltonian systems.
\newblock {\em J. Comput. Phys.}, 361:442--476, 2018.

\bibitem[Kot16]{Kot16}
P.~Kotyczka.
\newblock Finite volume structure-preserving discretization of 1d
  distributed-parameter port-hamiltonian systems.
\newblock {\em IFAC-PapersOnLine}, 49(8):298--303, 2016.

\bibitem[KU18]{KarU18}
B.~Karas\"ozen and M.~Uzunca.
\newblock Energy preserving model order reduction of the nonlinear schrödinger
  equation.
\newblock {\em Adv. Comput. Math.}, 44:1769--1796, 12 2018.

\bibitem[KV01]{KunV01}
K.~Kunisch and S.~Volkwein.
\newblock Galerkin proper orthogonal decomposition methods for parabolic
  problems.
\newblock {\em Numer. Math.}, 90:117--148, 2001.

\bibitem[KV02]{KunV02}
K.~Kunisch and S.~Volkwein.
\newblock Galerkin proper orthogonal decomposition methods for a general
  equation in fluid dynamics.
\newblock {\em SIAM J. Numer. Anal.}, 40:492--515, 2002.

\bibitem[KYU21]{KarYU21}
B.~Karasözen, S.~Yıldız, and M.~Uzunca.
\newblock Structure preserving model order reduction of shallow water
  equations.
\newblock {\em Math. Methods Appl. Sci.}, 44(1):476--492, 2021.

\bibitem[LeV92]{LeV92}
R.~LeVeque.
\newblock {\em Numerical Methods for Conservation Laws}.
\newblock Lectures in Mathematics ETH Z\"{u}rich. Birkh\"{a}user Verlag, Basel,
  second edition, 1992.

\bibitem[LGZM05]{LeGZM05}
Y.~Le~Gorrec, H.~Zwart, and B.~Maschke.
\newblock Dirac structures and boundary control systems associated with
  skew-symmetric differential operators.
\newblock {\em SIAM J. Control Optim.}, 44(5):1864--1892, 2005.

\bibitem[LKL21]{LohKL21}
M.~Lohmayer, P.~Kotyczka, and S.~Leyendecker.
\newblock Exergetic port-hamiltonian systems: Modelling basics.
\newblock {\em Math. Comput. Modell. Dyn. Syst.}, 27(1):489--521, 2021.

\bibitem[LM17]{LefM17}
L.~Lefevre and S.~Medianu.
\newblock Symplectic discretization of port controlled hamiltonian systems.
\newblock {\em IFAC-PapersOnLine}, 50(1):3629--3634, 2017.

\bibitem[LM18]{LanM18}
J.~Lang and P.~Mindt.
\newblock Entropy-preserving coupling conditions for one-dimensional euler
  systems at junctions.
\newblock {\em Netw. Heterog. Media}, 13(1):177--190, 2018.

\bibitem[LSM22a]{LilSM20}
B.~Liljegren-Sailer and N.~Marheineke.
\newblock On port-hamiltonian approximation of a nonlinear flow problem on
  networks.
\newblock {\em SIAM J. Sci. Comput.}, 44(3):B834--B859, 2022.

\bibitem[LSM22b]{LilSM21}
B.~Liljegren-Sailer and N.~Marheineke.
\newblock On snapshot-based model reduction under compatibility conditions for
  a nonlinear flow problem on networks.
\newblock {\em J. Sci. Comput.}, 92, 2022.

\bibitem[MM19]{MehM19}
V.~Mehrmann and R.~Morandin.
\newblock Structure-preserving discretization for port-hamiltonian descriptor
  systems.
\newblock In {\em 58th {IEEE} Conference on Decision and Control, 2019, Nice,
  France, December 11-13, 2019}, pages 6863--6868. {IEEE}, 2019.

\bibitem[MSMV22]{MosSMV22}
T.~Moser, P.~Schwerdtner, V.~Mehrmann, and M.~Voigt.
\newblock Structure-preserving model order reduction for index two
  port-hamiltonian descriptor systems, 2022.

\bibitem[MU23]{MehU23}
V.~Mehrmann and B.~Unger.
\newblock Control of port-hamiltonian differential-algebraic systems and
  applications.
\newblock {\em Acta Numer.}, 32:395–515, 2023.

\bibitem[OG97]{OetG97b}
H.~\"Ottinger and M.~Grmela.
\newblock Dynamics and thermodynamics of complex fluids. ii. illustrations of a
  general formalism.
\newblock {\em Phys. Rev. E}, 56(6):6633--6655, 1997.

\bibitem[PM16]{PenM16}
L.~Peng and K.~Mohseni.
\newblock Symplectic model reduction of hamiltonian systems.
\newblock {\em SIAM J. Sci. Comput.}, 38(1):A1--A27, 2016.

\bibitem[PRS23]{PhiRS23}
F.~Philipp, T.~Reis, and M.~Schaller.
\newblock Infinite-dimensional port-hamiltonian systems -- a system node
  approach, 2023.

\bibitem[PvdS10]{PolS10}
R.~Polyuga and A.~van~der Schaft.
\newblock Model reduction of port-hamiltonian systems as structured systems.
\newblock In {\em Proceedings of the 19th International Symposium on
  Mathematical Theory of Networks and Systems (MTNS 2010, Budapest, Hungary,
  July 5-9, 2010)}, pages 1509--1513, 2010.

\bibitem[PvdS11]{PolS11}
R.~Polyuga and A.~van~der Schaft.
\newblock Structure preserving moment matching for port-hamiltonian systems:
  Arnoldi and lanczos.
\newblock {\em IEEE Trans. on Automat. Contr.}, 56(6):1458--1462, 2011.

\bibitem[PvdS12]{PolS12}
R.~Polyuga and A.~van~der Schaft.
\newblock Effort- and flow-constraint reduction methods for structure
  preserving model reduction of port-hamiltonian systems.
\newblock {\em Syst. Control Lett.}, 61(3):412--421, 2012.

\bibitem[Sch23]{Sch23}
P.~Schulze.
\newblock Structure-preserving model reduction for port-hamiltonian systems
  based on separable nonlinear approximation ansatzes.
\newblock {\em Front. Appl. Math. Stat.}, 9, 2023.

\bibitem[SHM19]{SerHM19}
A.~Serhani, G.~Haine, and D.~Matignon.
\newblock Anisotropic heterogeneous n-d heat equation with boundary control and
  observation: I. modeling as port-hamiltonian system.
\newblock {\em IFAC-PapersOnLine}, 52(7):51--56, 2019.

\bibitem[SMH18]{SerMH18}
A.~Serhani, D.~Matignon, and G.~Haine.
\newblock Structure-preserving finite volume method for 2d linear and
  non-linear port-hamiltonian systems.
\newblock {\em IFAC-PapersOnLine}, 51(3):131--136, 2018.

\bibitem[SMH19]{SerMH19}
A.~Serhani, D.~Matignon, and G.~Haine.
\newblock Partitioned finite element method for port-hamiltonian systems with
  boundary damping: Anisotropic heterogeneous 2d wave equations.
\newblock {\em IFAC-Papers-\linebreak OnLine}, 52(2):96--101, 2019.

\bibitem[SMMV22]{SchMMV22}
P.~Schwerdtner, T.~Moser, V.~Mehrmann, and M.~Voigt.
\newblock Structure-preserving model order reduction for index one
  port-hamiltonian descriptor systems, 2022.

\bibitem[SSvdS14]{SchSS13}
M.~Seslija, J.~Scherpen, and A.~van~der Schaft.
\newblock Explicit simplicial discretization of distributed-parameter
  port-hamiltonian systems.
\newblock {\em Automatica}, 50(2):369--377, 2014.

\bibitem[TGvdS02]{SchTG02}
V.~Talasila, G.~Golo, and A.~van~der Schaft.
\newblock The wave equation as a port-hamiltonian system, and a
  finite-dimensional approximation.
\newblock In D.~Gilliam and J.~Rosenthal, editors, {\em Proceedings of the 15th
  International Symposium on Mathematical Theory of Networks and Systems},
  United States, 2002. University of Notre Dame.

\bibitem[TK22]{KotT21}
T.~Thoma and P.~Kotyczka.
\newblock Explicit port-hamiltonian fem-models for linear mechanical systems
  with non-uniform boundary conditions.
\newblock {\em IFAC-PapersOnLine}, 55(20):499--504, 2022.

\bibitem[vdSJ14]{SchJ14}
A.~van~der Schaft and D.~Jeltsema.
\newblock Port-{H}amiltonian systems theory: {A}n introductory overview.
\newblock {\em Found. Trends Syst. Control}, 1(2-3):173--378, 2014.

\bibitem[vdSM02]{SchM02}
A.~van~der Schaft and B.~Maschke.
\newblock {H}amiltonian formulation of distributed-parameter systems with
  boundary energy flow.
\newblock {\em J. Geom. Phys.}, 42(1-2):166--194, 2002.

\bibitem[vdSM23]{MehS23}
A.~van~der Schaft and V.~Mehrmann.
\newblock Linear port-hamiltonian dae systems revisited.
\newblock {\em Syst. Control Lett.}, 177:105564, 2023.

\bibitem[Vil07]{Vil17_Dis}
J.~Villegas.
\newblock {\em A Port-Hamiltonian Approach to Distributed Parameter Systems}.
\newblock PhD thesis, University of Twente, Netherlands, 2007.

\bibitem[WLEK10]{LohWEK10}
T.~Wolf, B.~Lohmann, R.~Eid, and P.~Kotyczka.
\newblock Passivity and structure preserving order reduction of linear
  port-hamiltonian systems using krylov subspaces.
\newblock {\em Eur. J. Control}, 16(4):401--406, 2010.

\end{thebibliography}

%\printbibliography[heading=bibintoc]

\end{document}